\numberwithin{equation}{section}
\theoremstyle{plain}
\newtheorem{prop}{Proposition}[section]
\newtheorem{thm}[prop]{Theorem}
\newtheorem{cor}[prop]{Corollary}
\newtheorem{lem}[prop]{Lemma}
\newtheorem{defn}[prop]{Definition}
\theoremstyle{definition}
\newtheorem{example}[prop]{Example}
\newtheorem{rem}[prop]{Remark}
\def\bbn{{\mathbb N}}
 \def\lz{\lambda}
\def\dim{{{\rm dim}\,}}
\def\End{{{\rm End}\,}} 
\def\Hom{{{\rm Hom}\,}}
\def\Ext{{{\rm Ext}\,}}
\def\mod{{\text{\rm mod}}}
\def\cal{\mathcal}
\def\dim{{\rm dim}\,}
\def\bbZ{{\mathbb Z}}
\def\bbp{{\mathbb P}}
\def\bbL{{\mathbb L}}
\def\Stab{{\rm Stab}}
\def\co{{\mathcal O}}
\def\vx{{\vec{x}}}
\def\vy{{\vec{y}}}
\def\vz{{\vec{z}}}
\def\vc{{\vec{c}}}
\def\ev{\mbox{\rm ev}}
\def\coh{{\rm coh\,}}
\def\vect{{\rm vect\,}}
\def\X{{\mathbb X}}
\def\coker{{\rm coker\,}}
\begin{document}

\title[{t-stabilities for a weighted projective line}]{t-stabilities for a weighted projective line}

\author{Shiquan Ruan and Xintian Wang}
\address{Xiamen University, Xiamen 361005, China}
\email{shiquanruan@stu.xmu.edu.cn }
\address{AMSS, Chinese Academy of Sciences, Beijing 100190, China}
\email{wangxintian0916@126.com }

\keywords{t-stability, stability condition, weighted projective
line, exceptional sequence}

%\thanks{Supported by the Natural Science Foundation of China.}

\date{\today}

\subjclass[2010]{18E10, 18E30, 16G20, 14F05, 16G70.}

\begin{abstract}
The present paper focuses on the study of t-stabilities on a
triangulated category in the sense of
Gorodentsev, Kuleshov and Rudakov. We give an equivalent description
for the finest t-stability on a piecewise hereditary triangulated
category and, describe the semistable
subcategories and final HN triangles for (exceptional) coherent
sheaves in $D^b(\coh\X)$, which is the bounded derived category
of coherent sheaves on the weighted projective line $\X$ of weight type (2). Furthermore, we show the existence of a t-exceptional triple for $D^b(\coh\X)$. As an
application, we obtain a result of Dimitrov--Katzarkov which states that each stability condition $\sigma$ in the sense of Bridgeland admits a $\sigma$-exceptional triple for the
acyclic triangular quiver $Q$. Note that this implies the connectedness
of the space of stability conditions associated to $Q$.
\end{abstract}

\maketitle
\section{Introduction}

The notion of a stability condition on a triangulated category was
first introduced by Bridgeland in \cite{TB}. The motivation comes
from the study of Dirichlet branes in string theory in physics, and
especially from Douglas's work on $\Pi$-stability. The main result
in \cite{TB} states that the space $\rm{Stab}(\mathcal{C}$) of all
locally finite stability conditions on an essentially small
triangulated category $\cal C$ is a complex manifold, with a natural
right action by the group $\widetilde{\rm{GL}}^+(2,\mathbb{R})$ of
the universal covering space of the group of rank two matrices with
positive determinant, and a left action by the group
$\rm{Aut}(\mathcal{C})$ of exact autoequivalences of $\mathcal{C}$.
This space carries an interesting geometric and topological
structure which reflects the properties of $\mathcal{C}$. Moreover,
stability conditions are related to many mathematical subjects, such
as Donaldson-Thomas (DT) theory, homological mirror symmetry theory
and so on.

Obviously, one would like to be able to compute the spaces
$\rm{Stab}(\mathcal{C}$) of stability conditions in some interesting
examples, such as the bounded derived category $\mathcal{C}=D^b(\coh
\X)$ of coherent sheaves over smooth projective varieties $\X$, or
the bounded derived category $\mathcal{C}=D^b(\mod kQ)$ of finite
dimensional modules over the path algebra $kQ$ for a quiver $Q$. For
$\mathcal{C}=D^b(\coh \X)$, Bridgeland \cite{TB} first dealt with
the elliptic curve case; and Macri \cite{EM} considered any smooth projective
curve over $\mathbb{C}$ of positive genus, showing that the action
of $\widetilde{\rm{GL}}^+(2,\mathbb{R})$ on the subspace of locally
finite numerical stability conditions is free and transitive, moreover, he also investigated the case of projective line
$\mathbb{P}^1$ (by using exceptional sequences), which was first dealt by Okada \cite{SO}. For
$\mathcal{C}=D^b(\mod kQ)$, many people have worked for $Q$ of
Dynkin type, c.f. \cite{BQS,Qiu,S}; for $n$-Kronecker quiver $Q$
with $n\geq 2$, Macri proved in \cite{EM} that
$\rm{Stab}(\mathcal{C}$) is a connected and simply connected
2-dimensional complex manifold, and Dimitrov--Karzarkov described this manifold in more details in \cite{DK3}. Recently, Dimitrov and
Katzarkov \cite{DK, DK2} worked on the acyclic triangular quiver
$Q$, showing that $\Stab(\mathcal{C})$ is connected and
contractible.

In \cite{EM},  Macri gave a procedure generating stability
conditions from exceptional sequences. More precisely, he gave a
natural way to associate to a complete Ext-exceptional sequence a
heart of a bounded t-structure and then a family of stability
conditions which have this one as heart. In this way, one can define
a collection of open connected subsets of $\rm{Stab}(\mathcal{C}$) of
maximal dimension, parametrized by the orbits of the action of the
braid group on exceptional sequences. This method provides a new way
to investigate the stability conditions via exceptional sequences.
Basing on this idea, Dimitrov and Katzarkov in \cite{DK} defined a
$\sigma$-exceptional sequence for a given stability condition
$\sigma$, and proved that for the acyclic triangular quiver $Q$,
there exists a $\sigma$-exceptional triple for each stability
condition $\sigma$ on $\mathcal{C}=D^b(\mod kQ)$. We remark that
this implies the connectedness of the space $\Stab(\mathcal{C})$ by
the transitivity of the complete exceptional triples.

The concept of a t-stability in a triangulated category
$\mathcal{C}$ was first introduced by Gorodentsev--Kuleshov--Rudakov
in \cite{GKR}, which is a generalization of Bridgeland's stability
condition. They established the relations
between t-stabilities and bounded t-structures on $\mathcal{C}$.
Indeed, they achieved a classification of the bounded t-structures
for $\mathcal{C}=D^b(\coh\mathbb{P}^1)$.

In the present paper we study the finest
t-stabilities on $\mathcal{C}$ and apply them to the study of
stability conditions. We give a sufficient and necessary condition
to determine when a t-stability is finest for $\mathcal{C}$
piecewise hereditary. Moreover, for the bounded derived category
$D^{b}(\coh\mathbb{X})$ of coherent sheaves on the weighted
projective line $\X$ of weight type (2), we describe the semistable
subcategories and the final HN triangles for finest t-stabilities in
details. After introducing the notion of a t-exceptional sequence on
$\mathcal{C}$, we show the existence of a t-exceptional triple for
$D^{b}(\coh\mathbb{X})$. Note that there is an equivalence between
$D^{b}(\coh\mathbb{X})$ and the bounded derived category $D^b(\mod
kQ)$ for the acyclic triangular quiver $Q$. We obtain that each
stability condition $\sigma$ on $D^b(\mod kQ)$ admits a
$\sigma$-exceptional triple, which implies the connectedness of the
space $\Stab(D^b(\mod kQ))$ and was first shown by
Dimitrov--Katzarkov in \cite{DK}.

The paper is organized as follows. In Section 2 we briefly introduce
the category $\coh\X$ of coherent sheaves on a weighted projective
line $\X$, and recall the definition and basic results for
exceptional sequences in a triangulated category $\mathcal{C}$.
Section 3 is the main part of this paper. We first recall the
definition of a (finest) t-stability on $\mathcal{C}$, and give a
sufficient and necessary condition for a t-stability to be finest.
Moreover, for the bounded derived category $D^b(\coh\X)$ of coherent
sheaves of weight type (2), we describe the semistable subcategories
as well as the final HN triangles for coherent sheaves. Furthermore,
by introducing the notion of a t-exceptional sequence in
$\mathcal{C}$, we prove that each finest t-stability admits a
t-exceptional triple for $D^b(\coh\X)$. Basing on the results of
t-stabilities, we investigate stability conditions on $\mathcal{C}$
in the sense of Bridgeland in Section 4. We obtain that each
stability condition $\sigma$ admits a $\sigma$-exceptional triple
for the acyclic triangular quiver $Q$, which was first shown by
Dimitrov--Katzarkov \cite{DK}.

Throughout this paper we always assume that $\mathcal{C}$ is an essentially small triangulated category, that is, the isomorphism classes of objects in $\mathcal{C}$ form a set.
Given a set $\mathcal{S}$ of objects in $\mathcal{C}$, we write
$\langle\mathcal{S}\rangle$ for the smallest strictly full
extension-closed subcategory of $\mathcal{C}$ that contains all the
objects in $\mathcal{S}$, and write $\rm{Tr}(\mathcal{S})$ for the
minimal full triangulated subcategory containing $\mathcal{S}$ which is
closed under isomorphisms. For an object $E\in\mathcal{C}$, we use
the simple notation $E^n$ to denote the direct sum of $n$ copies of
$E$. For $E,F\in\mathcal{C}$, we simply write
$\Hom(E,F)=\Hom_{\mathcal{C}}(E,F)$ and $\Ext^n(E,F)=\Hom_{\mathcal{C}}(E,F[n])$.

\section{Preliminaries}

\subsection{Coherent sheaves on a weighted projective line}
Following \cite{GL}, a \emph{weighted projective line} $\X=\X_ k $
over a field $ k $ is given by a weight sequence ${\bf
p}=(p_{1},\ldots, p_{t})$ of positive integers, and a sequence
${\boldsymbol\lambda}=(\lambda_{1},,\ldots, \lambda_{t}) $ of
distinct closed points (of degree $1$) in the projective line
$\mathbb{P}^{1}:=\mathbb{P}^{1}_ k $ which can be normalized as
$\lambda_{1}=\infty, \lambda_{2}=0, \lambda_{3}=1$. More precisely,
let $\bbL=\bbL(\bf p)$ be the rank one abelian group with generators
$\vec{x}_{1}, \ldots, \vec{x}_{t}$ and the relations
\[ p_{1}\vx_1=\cdots=p_{t}\vx_t=:\vec{c},\]
where $\vec{c}$ is called the \emph{canonical element} of
$\mathbb{L}$. Each element $\vx\in\bbL$ has the \emph{normal form}
$\vx=\sum\limits_{i=1}^t l_i\vx_i+l\vc$ with $0\leq l_i\leq p_i-1$
and $l\in\bbZ$. Denote by $S$ the commutative algebra
$$S=S({\bf p},{\boldsymbol\lambda})= k [X_{1},\cdots, X_{t}]/I:
=  k [x_{1}, \ldots, x_{t}],$$ where $I=(f_{3},\ldots,f_{t})$ is the
ideal generated by
$f_{i}=X_{i}^{p_{i}}-X_{2}^{p_{2}}+\lambda_{i}X_{1}^{p_{1}}$ for
$3\leq i\leq t$. Then $S$ is $\mathbb{L}$-graded by setting
$$\mbox{deg}(x_{i})=\vx_i\; \text{ for $1\leq i\leq t$.}$$
Finally, the weighted projective line associated with $\bf p$ and
$\boldsymbol\lz$ is defined to be
$$\X=\X_ k =\rm{Spec}^{\bbL}S,$$
 the spectrum of $\bbL$-graded homogeneous prime ideals of $S$.

The category of coherent sheaves on $\X$ is defined to be the
quotient category
$$\coh\X={\rm mod}^{\mathbb{L}}S/\mbox{mod}_{0}^{\mathbb{L}}S,$$
where ${\rm mod}^{\mathbb{L}}S$ is the category of finitely
generated $\mathbb{L}$-graded $S$-modules, while
$\mbox{mod}_{0}^{\mathbb{L}}S$ is the Serre subcategory of finite
length $\mathbb{L}$-graded $S$-modules. The grading shift gives the
twist $E(\vec{x})$ for every sheaf $E$ and $\vec{x}\in\bbL$.

Moreover, $\coh\X$ is a hereditary abelian category with Serre
duality of the form
\begin{equation}\label{serre duality} D\Ext^1(X, Y)\cong\Hom(Y, X(\vec{\omega})),\end{equation}
where $D=\Hom_ k (-, k )$, and
$\vec{\omega}:=(t-2)\vec{c}-\sum_{i=1}^{t}\vec{x}_{i}\in\bbL$,
called the \emph{dualizing element}. This implies the existence of
almost split sequences in $\coh\X$ with the Auslander--Reiten
translation $\tau$ given by the grading shift with $\vec{\omega}$.

It is known that $\coh\X$ admits a splitting torsion pair $({\rm
coh}_0\X,\vect{\X})$, where ${\rm coh}_0\X$ and $\vect{\X}$ are full
subcategories of torsion sheaves and vector bundles, respectively.
The free module $S$ yields a structure sheaf $\co\in \vect{\X}$, and
each object in $\vect{\X}$ has a finite filtration by line bundles,
that is, sheaves of the form $\co(\vec{x})$. Moreover, for any
$\vx,\vy\in\bbL$ we have $$\Hom(\co(\vx),\co(\vy))\cong
S_{\vy-\vx}.$$
 The subcategory ${\rm coh}_0\X$ admits ordinary simple
sheaves $S_{\lambda}$ for each $\lambda\in\mathbb{H}_ k
:=\mathbb{P}^{1}_ k \backslash\{\lz_1,\ldots,\lz_t\}$ and
exceptional simple sheaves $S_{i,j}$ for $1\leq i\leq t$ and $0\leq
j\leq p_i-1$. For any line bundle $L$, $S_{\lambda}$ is determined
by the exact sequence $$0\longrightarrow L\xrightarrow{x_2-\lambda
x_1^2}L(\vc)\longrightarrow S_{\lambda}\longrightarrow 0.$$ If we denote by $S_{i,L}$ the
unique exceptional simple sheaf satisfying that $\Hom(L,S_{i,L})\neq
0$, then $S_{i,L}$ fits into the following exact sequences
$$0\longrightarrow L(-\vx_i)\stackrel{x_i}{\longrightarrow}
L\longrightarrow S_{i,L}\longrightarrow 0.$$
Moreover, the nonzero extensions between these simple sheaves are
given by
$$\Ext^1(S_{\lambda}, S_{\lambda})\cong k (\lambda),\; \Ext^1(S_{i,j}, S_{i,j'})\cong k \;\text{ for $j'\equiv j-1$ (mod\, $p_i$),}$$
 where $ k (\lambda)$ denotes the finite extension of $ k $ with $[ k (\lambda): k ]$ the degree of $\lambda$.
For each simple sheaf $S$ and $n\geq 1$, there is a unique sheaf
$S^{(n)}$ with length $n$ and top $S$, which is uniserial. Indeed,
the sheaves $S^{(n)}$ form a complete set of indecomposable objects
in ${\rm coh}_0\mbox{-}\X$. For convenience, we also use the
notation $S_{i,l}$ for $1\leq i\leq t$ and $l\in\bbZ$ to denote the
simple sheaf $S_{i,j}$ with $j\equiv l$ (mod $p_i$) and $0\leq j\leq
p_i-1$.

The following result establishes a close relation between the
weighted projective lines and the canonical algebras $\Lambda({\bf
p}, {\boldsymbol\lambda})$ introduced by Ringel:

\begin{prop}\label{canonical tilting} {\rm(}\cite[Prop. 4.1]{GL}{\rm)} There exists a canonical tilting sheaf $T_{\rm{can}}=\bigoplus\limits_{0\leq \vx\leq \vc}\co(\vx)$ in $\coh\X$ with endomorphism algebra isomorphic to the canonical algebra $\Lambda:=\Lambda({\bf p}, {\boldsymbol\lambda})$. In particular, there is a derived equivalence $D^b(\coh\X)\cong D^b(\mod \Lambda)$.
\end{prop}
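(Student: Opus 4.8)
The plan is to verify that $T_{\rm{can}}=\bigoplus_{0\le\vx\le\vc}\co(\vx)$ is a tilting object in the hereditary category $\coh\X$ --- that is, it is rigid and generates --- then to compute its endomorphism algebra, and finally to invoke the standard tilting theorem to obtain the derived equivalence. First I would establish rigidity, i.e. $\Ext^1(T_{\rm{can}},T_{\rm{can}})=0$; since $\coh\X$ is hereditary, all higher extension groups vanish automatically. Writing $\Ext^1(T_{\rm{can}},T_{\rm{can}})=\bigoplus_{0\le\vx,\vy\le\vc}\Ext^1(\co(\vx),\co(\vy))$ and applying Serre duality \eqref{serre duality}, each summand is isomorphic to $D\Hom(\co(\vy),\co(\vx+\vw))\cong D\,S_{\vx-\vy+\vw}$. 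It then suffices to show $S_{\vx-\vy+\vw}=0$ whenever $0\le\vx,\vy\le\vc$. This reduces to a normal-form computation in $\bbL$: writing $\vx-\vy+\vw=\sum_i l_i\vx_i+l\vc$ with $0\le l_i\le p_i-1$, the constraints $0\le\vx,\vy\le\vc$ together with $\vw=(t-2)\vc-\sum_i\vx_i$ force $l<0$, and a homogeneous component $S_{\vz}$ is nonzero precisely when the $\vc$-coefficient in its normal form is nonnegative.

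Next I would prove that $\mathcal{T}:=\Tr(T_{\rm{can}})$ is all of $D^b(\coh\X)$ by showing $\coh\X\subseteq\mathcal{T}$ via the structure sequences recalled above. Feeding the line bundles $L=\co(\vx)$ with $\vx_i\le\vx\le\vc$ (whose outer terms $L(-\vx_i)=\co(\vx-\vx_i)$ and $L$ both lie in $T_{\rm{can}}$) into $0\to L(-\vx_i)\xrightarrow{x_i}L\to S_{i,L}\to 0$ shows every exceptional simple $S_{i,j}$ lies in $\mathcal{T}$, while $0\to\co\to\co(\vc)\to S_\lambda\to 0$ shows every ordinary simple $S_\lambda$ lies in $\mathcal{T}$. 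Using the general sequence $0\to\co(\vx-\vx_i)\xrightarrow{x_i}\co(\vx)\to S_{i,\co(\vx)}\to 0$ and the fact that $\mathcal{T}$, being triangulated, is closed under extensions, we get $\co(\vx)\in\mathcal{T}$ if and only if $\co(\vx-\vx_i)\in\mathcal{T}$; since the $\vx_i$ generate $\bbL$, an induction starting from the fundamental domain $0\le\vx\le\vc$ places every line bundle in $\mathcal{T}$. As every vector bundle is filtered by line bundles and every torsion sheaf is an iterated extension of simples, the splitting torsion pair $({\rm coh}_0\X,\vect\X)$ then yields $\coh\X\subseteq\mathcal{T}$, whence $\mathcal{T}=D^b(\coh\X)$.

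Finally I would compute $\End(T_{\rm{can}})=\bigoplus_{0\le\vx,\vy\le\vc}\Hom(\co(\vx),\co(\vy))=\bigoplus_{0\le\vx,\vy\le\vc}S_{\vy-\vx}$ and identify it with $\Lambda$: the vertices $\{\vx:0\le\vx\le\vc\}$ are exactly the source $0$, the sink $\vc$, and the arm vertices $l\vx_i$ $(1\le l\le p_i-1)$; the arrows are the multiplications $x_i\colon\co(\vx)\to\co(\vx+\vx_i)$ spanning the one-dimensional spaces $S_{\vx_i}$; and the relations $x_i^{p_i}=x_2^{p_2}-\lambda_i x_1^{p_1}$ inherited from the generators $f_i$ of $I$ are precisely the canonical relations defining $\Lambda$. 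Once rigidity, generation, and this identification are in place, the derived equivalence follows from the general tilting theorem for hereditary abelian categories: a rigid generator is a tilting object, and $R\!\Hom(T_{\rm{can}},-)$ induces $D^b(\coh\X)\cong D^b(\mod\Lambda)$.

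I expect the main obstacle to lie in the generation step --- verifying carefully that the inductive use of the structure sequences really does reach every line bundle, and hence every simple and every coherent sheaf, from the finitely many summands of $T_{\rm{can}}$ --- together with the bookkeeping needed to confirm that the multiplicative relations among the $x_i$ in $S$ reproduce \emph{exactly}, with no superfluous relations, the defining relations of the canonical algebra $\Lambda$.
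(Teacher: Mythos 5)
The paper offers no proof of this proposition at all: it is quoted verbatim from Geigle--Lenzing \cite[Prop.~4.1]{GL}, so there is nothing internal to compare against. What you have written is, in substance, a correct reconstruction of the original Geigle--Lenzing argument, and all three ingredients check out. For rigidity, Serre duality gives $\Ext^1(\co(\vx),\co(\vy))\cong D S_{\vx-\vy+\vw}$, and since $\vx-\vy+\vw\leq \vc+\vw=\sum_i(p_i-1)\vx_i-\vc$, whose normal form has $\vc$-coefficient $-1$, the positivity criterion for $S_{\vec z}\neq 0$ fails for every such degree, exactly as you say. For generation, taking $\vx=l\vx_i$ with $1\leq l\leq p_i$ in the sequence $0\to\co(\vx-\vx_i)\to\co(\vx)\to S_{i,\co(\vx)}\to 0$ produces \emph{all} exceptional simples $S_{i,j}$ (a point worth making explicit), after which your two-out-of-three induction reaches every line bundle because the $\vx_i$ generate $\bbL$; line-bundle filtrations and uniseriality of torsion sheaves finish $\coh\X\subseteq\Tr(T_{\rm can})$. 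The identification of $\End(T_{\rm can})$ with $\Lambda$ is the standard bookkeeping you describe; the "no superfluous relations" worry is settled by the dimension count $\dim_k S_{\vc}=2$ versus the $t$ paths $x_i^{p_i}$ from the source to the sink. The only attribution to add is that the final step (rigid generator in a hereditary abelian category $\Rightarrow$ tilting object $\Rightarrow$ derived equivalence via $R\Hom(T_{\rm can},-)$) is Happel's tilting theorem in the form used by Geigle--Lenzing, not something that needs reproving here.
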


\subsection{Exceptional sequence}

In this subsection we recall some basic results on exceptional
sequences on a triangulated category $\mathcal{C}$.

\begin{defn} An object $E$ in $\mathcal{C}$ is called exceptional if $\Hom(E,E[n])\cong \delta_{n,0}k$. An ordered collection of exceptional objects $(E_0,E_1,\cdots,E_n)$ is called an exceptional sequence if $\Hom^{\bullet}(E_j,E_i)=0$ for $i<j$; and it is further called an Ext-exceptional sequence if $\Hom^{\leq 0}(E_i,E_j)=0$ for $i< j$.
\end{defn}

Let $(E,F)$ be an exceptional pair. Recall that the left mutation
$\mathcal{L}_{E}(F)$ and right mutation $\mathcal{R}_{F}(E)$ are
defined by the following distinguished triangles (see for example
\cite{EM}):
$$\mathcal{L}_{E}(F)\longrightarrow\Hom^{\bullet}(E,F)\otimes E\longrightarrow F,$$
$$E\longrightarrow \rm{D}\Hom^{\bullet}(E,F)\otimes F\longrightarrow \mathcal{R}_{F}(E);$$
where $D=\Hom_k(-,k)$, $V[l]\otimes E$ (with $V$ a vector space)
denotes an object isomorphic to the direct sum of $\dim V$ copies of
the object $E[l]$.

\begin{lem} Let $(E_1,E_{2},\cdots,E_n)$ be an Ext-exceptional
sequence in $\mathcal{C}$ with $\Ext^{j}(E_i,E_{i+1})\neq 0$ for
some $1\leq i<n,$ and $j\in\mathbb{Z}$. Then $(E_1,\cdots,E_{i-1},
\mathcal{R}_{E_{i+1}}(E_{i})[-j])$ and
$(\mathcal{L}_{E_{i}}(E_{i+1})[j],E_{i+2}, \cdots,E_n)$ are
Ext-exceptional sequences.
\end{lem}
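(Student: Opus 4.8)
The plan is to check, for each of the two families, the three defining properties of an Ext-exceptional sequence: every object is exceptional, backward morphisms $\Hom^{\bullet}$ vanish (so that the family is an exceptional sequence), and forward morphisms in non-positive degrees $\Hom^{\leq 0}$ vanish (so that it is moreover Ext-exceptional). Since the terms $E_1,\dots,E_{i-1}$ (resp. $E_{i+2},\dots,E_n$) are left untouched, all relations among them are inherited from the original sequence, and only the relations involving the mutated terms $\mathcal{R}_{E_{i+1}}(E_i)[-j]$ (resp. $\mathcal{L}_{E_i}(E_{i+1})[j]$) have to be verified. I would first record two preliminary remarks: since $(E_1,\dots,E_n)$ is Ext-exceptional we have $\Hom^{\leq 0}(E_i,E_{i+1})=0$, whence $j\geq 1$; and $(E_i,E_{i+1})$ is an exceptional pair, so both mutations are defined. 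The two statements are symmetric, so I would treat $G:=\mathcal{R}_{E_{i+1}}(E_i)$ in detail and only indicate the (dual) modifications for $\mathcal{L}_{E_i}(E_{i+1})$.

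That the mutated term is exceptional is the standard fact that a mutation of an exceptional pair is again exceptional, and shifts preserve exceptionality; I would quote this from the mutation calculus (cf. \cite{EM,GKR}). Everything else is extracted from the defining triangle
\[
E_i\lra \DHom^{\bullet}(E_i,E_{i+1})\otimes E_{i+1}\lra G\lra E_i[1]
\]
by applying $\Hom(E_a,-)$ and $\Hom(-,E_a)$ and reading off the resulting long exact sequences.

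For the backward vanishing $\Hom^{\bullet}(G[-j],E_a)=0$ with $a\leq i-1$, applying $\Hom(-,E_a)$ to the triangle reduces the claim to $\Hom^{\bullet}(E_i,E_a)=0$ and $\Hom^{\bullet}(E_{i+1},E_a)=0$, both of which hold because $a<i<i+1$ and $(E_1,\dots,E_n)$ is an exceptional sequence; this step does not see the shift. For the forward vanishing $\Hom^{\leq 0}(E_a,G[-j])=0$ with $a\leq i-1$, applying $\Hom(E_a,-)$ expresses $\Hom^{\bullet}(E_a,G)$ through $\Hom^{\bullet}(E_a,E_i)$ and $\DHom^{\bullet}(E_i,E_{i+1})\otimes\Hom^{\bullet}(E_a,E_{i+1})$. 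By Ext-exceptionality the honest groups $\Hom^{\bullet}(E_a,E_i)$ and $\Hom^{\bullet}(E_a,E_{i+1})$ are concentrated in strictly positive degrees, while $\DHom^{\bullet}(E_i,E_{i+1})$ is concentrated in degrees between $-j$ and $-1$; a short degree count (a positive degree added to something $\geq -j$ stays $\geq 1-j$) then gives $\Hom^{\ell}(E_a,G)=0$ for all $\ell\leq -j$, which after the shift $[-j]$ is exactly $\Hom^{\leq 0}(E_a,G[-j])=0$. The mirror computation for $\mathcal{L}_{E_i}(E_{i+1})[j]$ runs along the triangle $\mathcal{L}_{E_i}(E_{i+1})\to \Hom^{\bullet}(E_i,E_{i+1})\otimes E_i\to E_{i+1}$ with the roles of $\Hom(E_a,-)$ and $\Hom(-,E_a)$ interchanged and $a\geq i+2$, and it reduces to the same degree count (the factor $\DHom^{\bullet}(E_i,E_{i+1})$ again entering through $\Hom(\Hom^{\bullet}(E_i,E_{i+1})\otimes E_i,E_a)=\DHom^{\bullet}(E_i,E_{i+1})\otimes\Hom^{\bullet}(E_i,E_a)$).

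I expect the degree bookkeeping of the last step to be the main obstacle: one has to follow the grading through the dual and the tensor product and confirm that the shift by $\pm j$ exactly cancels the potential degree-$0$ contribution obtained by pairing a positive-degree $\Hom$ with the bottom degree $-j$ of $\DHom^{\bullet}(E_i,E_{i+1})$. This is what forces $j$ to be the top degree in which $\Ext^{\bullet}(E_i,E_{i+1})$ is non-zero; in the situations treated here this $\Ext^{\bullet}(E_i,E_{i+1})$ is concentrated in the single degree $j$, so the count closes cleanly. A minor additional point, already subsumed in the mutation calculus, is the exceptionality of the new object, which requires applying $\Hom$ to the triangle on both sides rather than a one-sided vanishing.
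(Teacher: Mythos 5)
Your proof is correct and follows essentially the same route as the paper's: both reduce the problem to the pairs involving the mutated object, apply $\Hom(E_l,-)$ (resp.\ $\Hom(-,E_l)$) to the defining triangle of the mutation, and check that the shift by $\mp j$ makes the degree count close, handling the left mutation dually and quoting the standard fact that mutation preserves exceptionality. The one subtlety you rightly flag --- that the argument needs $\Hom^{\bullet}(E_i,E_{i+1})$ to be concentrated in the single degree $j$ --- is present in the paper's proof as well (it simply writes ``Observe that $\Hom^{\bullet}(E_i,E_{i+1})=\Hom(E_i,E_{i+1}[j])$''), and is justified in the intended setting $D^b(\coh\X)$ by the paper's Lemma on exceptional objects having at most one nonvanishing $\Hom$-degree.
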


\begin{proof} We only prove for the right mutation case, the proof for the left mutation one is similar. Obviously, it suffices to show that for any $1\leq l<i-1, (E_l,\mathcal{R}_{E_{i+1}}(E_{i})[-j])$ is an Ext-exceptional pair.

By the definition of right mutation we know that
$(E_l,\mathcal{R}_{E_{i+1}}(E_{i})[-j])$ is an exceptional pair.
Observe that $\Hom^{\bullet}(E_i,E_{i+1})=\Hom(E_i,E_{i+1}[j])$. By
applying $\Hom^{\leq 0}(E_l,-)$ to the triangle $$E_i\longrightarrow
\rm{D}\Hom(E_i,E_{i+1}[j])\otimes E_{i+1}[j]\longrightarrow
\mathcal{R}_{E_{i+1}}(E_i)$$ we obtain that  $\Hom^{\leq
0}(E_l,\mathcal{R}_{E_{i+1}}(E_{i})[-j])=0$. Hence,
$(E_l,\mathcal{R}_{E_{i+1}}(E_{i})[-j])$ is Ext-exceptional.
\end{proof}

The following result is well-known for the triangulated category
$D^b(\coh\X)$:

\begin{lem}{\rm(}\cite[Lem. 3.2.4]{HM}{\rm)} For any exceptional objects $E,F$ in $D^b(\coh\X)$, there exists at most one integer $n$, such that $\Hom(E,F[n])\neq 0$.
\end{lem}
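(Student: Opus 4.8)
The plan is to use that $\coh\X$ is hereditary to reduce the statement to a vanishing question for $\Hom$ and $\Ext^1$ between two exceptional sheaves, and then to settle that question with Serre duality together with the slope filtration. First I would note that, since $\coh\X$ is hereditary, every object $X\in D^b(\coh\X)$ splits as $X\cong\bigoplus_i H^i(X)[-i]$. An exceptional object has $\End(E)=k$ and is therefore indecomposable, so its cohomology is concentrated in a single degree; thus $E\cong\mathcal E[m]$ and $F\cong\mathcal F[m']$ for exceptional sheaves $\mathcal E,\mathcal F\in\coh\X$ and integers $m,m'$. Then $\Hom(E,F[n])\cong\Ext^{\,m'-m+n}(\mathcal E,\mathcal F)$, which, $\coh\X$ being hereditary, vanishes unless $m'-m+n\in\{0,1\}$. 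Hence it suffices to show that $\Hom(\mathcal E,\mathcal F)$ and $\Ext^1(\mathcal E,\mathcal F)$ cannot both be nonzero.

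Next I would rewrite the obstruction using Serre duality \eqref{serre duality}, which gives $\Ext^1(\mathcal E,\mathcal F)\cong D\Hom(\mathcal F,\mathcal E(\vw))$. Combined with the splitting torsion pair $({\rm coh}_0\X,\vect\X)$ this disposes of the mixed cases at once: if $\mathcal E$ is torsion and $\mathcal F$ a bundle then $\Hom(\mathcal E,\mathcal F)=0$, while if $\mathcal E$ is a bundle and $\mathcal F$ torsion then $\Hom(\mathcal F,\mathcal E(\vw))=0$ and so $\Ext^1(\mathcal E,\mathcal F)=0$; either way at most one group survives. When both $\mathcal E,\mathcal F$ are torsion they lie in the tubular family ${\rm coh}_0\X=\coprod_x\mathcal U_x$; objects in distinct tubes are completely orthogonal, and inside a single exceptional tube the only exceptional objects are the quasi-simple ones, for which the extension data $\Ext^1(S_{i,j},S_{i,j'})\neq 0\Leftrightarrow j'\equiv j-1$ reduces the claim to a finite check (for weight type $(2)$ the only exceptional torsion sheaves are $S_{1,0}$ and $S_{1,1}$, and $\Hom(S_{1,0},S_{1,1})=0$).

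The remaining, and principal, case is when $\mathcal E,\mathcal F$ are both vector bundles, where I would argue with slopes. Writing $\mu=\deg/\mathrm{rk}$ and using that exceptional bundles are stable, a nonzero map $\mathcal E\to\mathcal F$ forces $\mu(\mathcal E)\le\mu(\mathcal F)$, while $\Ext^1(\mathcal E,\mathcal F)\cong D\Hom(\mathcal F,\mathcal E(\vw))\neq 0$ forces $\mu(\mathcal F)\le\mu(\mathcal E(\vw))=\mu(\mathcal E)+\deg\vw$. Since $\X$ is of domestic type (for weight type $(2)$ one has $\vw=-3\vx_1$), $\deg\vw<0$, and the two inequalities combine to $\mu(\mathcal E)\le\mu(\mathcal F)\le\mu(\mathcal E)+\deg\vw<\mu(\mathcal E)$, a contradiction. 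Thus $\Hom(\mathcal E,\mathcal F)$ and $\Ext^1(\mathcal E,\mathcal F)$ are never simultaneously nonzero, which finishes the argument. The main obstacle is exactly this last step: one needs the stability of exceptional bundles and the sign $\deg\vw<0$, i.e. the structural inputs that control the slope behaviour of maps and of the Serre twist. Once these are granted, the torsion bookkeeping and the cohomological reduction are routine.
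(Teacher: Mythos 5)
The paper does not prove this lemma at all: it is quoted verbatim from Meltzer \cite[Lem.~3.2.4]{HM}, so there is no in-paper argument to compare yours against. Judged on its own terms, your reduction is correct and clean: heredity splits any object of $D^b(\coh\X)$ into shifted cohomologies, exceptional objects are indecomposable and hence concentrated in one degree, and the problem becomes showing that $\Hom(\mathcal E,\mathcal F)$ and $\Ext^1(\mathcal E,\mathcal F)$ do not both survive for exceptional sheaves $\mathcal E,\mathcal F$. The mixed torsion/bundle cases via the torsion pair and Serre duality are fine, and in the setting the paper actually uses (weight type $(2)$, where the exceptional sheaves are exactly the line bundles and $S_{1,0},S_{1,1}$, and $\vec{\omega}=-3\vx_1$ has negative degree) your slope argument does close the bundle case.

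However, as a proof of the lemma in the generality in which it is stated (arbitrary $\X$, which is the scope of \cite[Lem.~3.2.4]{HM}), there are two genuine gaps. First, your assertion that the only exceptional objects in an exceptional tube are the quasi-simples is false once the tube has rank $p\geq 3$: every uniserial $S_{i,j}^{(l)}$ with $l<p$ is exceptional, so the ``finite check'' inside a tube is a real (if routine) verification, not a triviality. Second, and more seriously, the decisive step for two bundles rests on two inputs that are special to the domestic case: that exceptional bundles are stable, and that $\deg\vec{\omega}<0$. For tubular weight types $\deg\vec{\omega}=0$ and your chain of inequalities only yields $\mu(\mathcal E)=\mu(\mathcal F)$, which you can still rescue (a nonzero map between stable sheaves of equal slope is an isomorphism, and $\Ext^1(\mathcal E,\mathcal E)=0$); but for wild weight types $\deg\vec{\omega}>0$, exceptional bundles need not be semistable, and the slope comparison produces no contradiction whatsoever. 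So your argument cannot recover Meltzer's lemma in full; it should be presented either as a proof for the domestic (in particular weight type $(2)$) case only, or supplemented by an entirely different mechanism (Meltzer's proof proceeds by mutation/reduction techniques rather than slopes) for the general statement.
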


In this paper we mainly focus on the bounded derived category
$D^b(\coh\X)$ for the weighted projective line $\X$ of weight type
(2). For this special case, we can say more on exceptional
sequences:

\begin{prop}
\label{exceptional pair and triple} Assume $\X$ has weight type (2).
Up to degree shift, all the exceptional pairs in $\coh\X$ are given
by
$$(\co,\co(\vc)),(\co,\co(\vec{x}_1)),(\co,S_{1,0}),(S_{1,1},\co);$$
and all the complete exceptional sequences in $\coh\X$ have the
following forms
$$(\co,\co(\vc),S_{1,0}),(\co,\co(\vec{x}_1),\co(\vc)),(\co,S_{1,0},\co(\vec{x}_1)),(S_{1,1},\co,\co(\vc)).$$
\end{prop}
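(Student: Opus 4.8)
The plan is to work in the category $\coh\X$ for weight type $(2)$, where $p_1=2$ and the only exceptional simple torsion sheaves are $S_{1,0}$ and $S_{1,1}$, while line bundles are exactly the twists $\co(\vx)$. The strategy is to classify all exceptional \emph{objects} first, then build pairs, then triples, using the Serre duality formula \eqref{serre duality} with $\vw=-\vc-\vx_1+\vx_2$ (here $t=2$ so $\vw=(t-2)\vc-\vx_1-\vx_2$) together with the fact that $\coh\X$ is hereditary, so $\Hom^{\bullet}$ between two objects is concentrated in degrees $0$ and $1$. For sheaves $E,F$ an exceptional pair requires $\Hom(F,E)=\Ext^1(F,E)=0$; heredity makes these finiteness conditions very rigid.

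First I would identify the exceptional objects. An exceptional sheaf $E$ has $\End(E)=k$ and $\Ext^1(E,E)=0$; by Serre duality $\Ext^1(E,E)\cong D\Hom(E,E(\vw))$, so an exceptional sheaf is rigid with trivial endomorphisms. In $\coh\X$ of weight type $(2)$ these are precisely the line bundles $\co(\vx)$ (all exceptional, as $\Hom(\co,\co(\vw))=S_{\vw}=0$ since $\vw$ is not effective) and the two exceptional simple torsion sheaves $S_{1,0},S_{1,1}$ (the ordinary simples $S_\lambda$ have $\Ext^1(S_\lambda,S_\lambda)\neq0$, and longer torsion sheaves $S^{(n)}$ with $n\geq 2$ are non-rigid). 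So up to the degree shift in $\coh\X$ the exceptional objects are $\co(\vx)$, $S_{1,0}$, $S_{1,1}$.

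Next I would enumerate exceptional pairs. For two line bundles $(\co(\vx),\co(\vy))$ one needs $\Hom(\co(\vy),\co(\vx))=S_{\vx-\vy}=0$ and $\Ext^1=0$; computing $S_{\vx-\vy}=\Hom(\co(\vy),\co(\vx))$ via the normal form $\vx-\vy=l_1\vx_1+l\vc$ and invoking Serre duality for the Ext-vanishing pins down, up to an overall twist, exactly $(\co,\co(\vc))$ and $(\co,\co(\vx_1))$. For a line-bundle/torsion pair one uses the defining sequences $0\to\co(-\vx_1)\xrightarrow{x_1}\co\to S_{1,0}\to0$ and its twist to read off $\Hom$ and $\Ext^1$ between $\co(\vx)$ and $S_{1,j}$; the surviving pairs are $(\co,S_{1,0})$ and $(S_{1,1},\co)$. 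Since $\Hom^{\bullet}(S_{1,0},S_{1,1})$ and $\Hom^{\bullet}(S_{1,1},S_{1,0})$ are both nonzero (the nonzero $\Ext^1$ between the two exceptional simples), no pair of torsion sheaves is exceptional. This yields exactly the four listed pairs.

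Finally I would assemble complete exceptional triples by extending each pair on the right (or the third object is forced), checking the defining vanishing $\Hom^{\bullet}(E_j,E_i)=0$ for $i<j$ across all three slots, and confirming completeness via the derived equivalence of Proposition~\ref{canonical tilting} (the Grothendieck group has rank $3$, so a complete exceptional sequence has length $3$). Running through the four pairs and testing which exceptional objects can occupy the remaining slot produces precisely $(\co,\co(\vc),S_{1,0})$, $(\co,\co(\vx_1),\co(\vc))$, $(\co,S_{1,0},\co(\vx_1))$, and $(S_{1,1},\co,\co(\vc))$. The main obstacle I expect is the bookkeeping in the pair classification: one must handle the normal-form expansion $\vx=l_1\vx_1+l\vc$ with $0\le l_1\le 1$ systematically and correctly track which twists make $S_{\vx-\vy}$ or its Serre dual vanish, since an error in a single case would either omit a valid pair or admit a spurious one. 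Once the pairs are correct, the triples follow by a short finite check.
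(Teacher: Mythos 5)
Your proposal follows essentially the same route as the paper, whose entire proof is the observation that everything follows from $\Hom(\co(\vx),\co(\vy))\cong S_{\vy-\vx}$, $\Hom(\co,S_{1,i})\cong\delta_{i0}k$ and Serre duality; your expanded version (classify the exceptional objects, then the pairs via these Hom/Ext computations, then read off the triples using that $K_0$ has rank $3$) is correct in outline and in substance. One slip to fix before executing the computation: weight type $(2)$ means $t=1$ and $p_1=2$, so $\bbL$ is generated by $\vx_1$ with $\vc=2\vx_1$ and the dualizing element is $\vw=(t-2)\vc-\vx_1=-\vc-\vx_1=-3\vx_1$, not the two-point expression involving $\vx_2$ that you wrote; with the correct $\vw$ the Serre-duality conditions do pin down exactly the four listed pairs.
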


\begin{proof} The first asssertion follows from the facts $\Hom(\co(\vx),\co(\vy))\cong S_{\vy-\vx}$ and
$\Hom(\co, S_{1,i})\cong\delta_{i0}k$ for $i=0,1$ and the Serre
duality (\ref{serre duality}). The second one is an immediate
consequence.
\end{proof}

\begin{rem}\label{excep}Let $(E_0,E_1,\cdots,E_n)$ be an exceptional sequence on $\coh\mathbb{X}$ for any weighted projective line $\X$. It is known that
$(E_0[k_0],E_1[k_1],\cdots,E_n[k_n])$ is again an exceptional
sequence for any $k_i\in\mathbb{Z}$. Moreover, for $1\leq i,j\leq n$, there exists at most
one integer $k_{ij}$ satisfying $\Ext^{1}(E_i,E_j[k_{ij}])\neq 0$.
In case that $\X$ is of weight type $(2)$, there are no
Hom-orthogonal exceptional pairs by Proposition \ref{exceptional
pair and triple}. Hence, such $k_{ij}$ always exists. In this case,
set
$\widetilde{E_0}=E_0,\widetilde{E_1}=E_1[k_{01}],\widetilde{E_2}=E_2[k_{01}+k_{12}]$.
Then $(\widetilde{E_0},\widetilde{E_1},\widetilde{E_2})$ is a
complete exceptional sequence on $D^b(\coh\mathbb{X})$ satisfying
$\widetilde{E_0}\in\coh\mathbb{X},\Ext^{1}(\widetilde{E_0},\widetilde{E_1})\neq
0$ and $\Ext^{1}(\widetilde{E_1},\widetilde{E_2})\neq 0$. Therefore,
each Ext-exceptional sequence in $D^b(\coh\mathbb{X})$ has the form
\begin{equation}\label{ext-exc law} (\widetilde{E_0}[k_0],\widetilde{E_1}[k_1],\widetilde{E_2}[k_2]) \quad\text{with}\quad k_0\geq k_1\geq k_2.
\end{equation}
\end{rem}

\begin{example}\label{kij}  Assume $\X$ has weight type (2). The exceptional sequences $(\widetilde{E_0},\widetilde{E_1},\widetilde{E_2})$ defined as above can be explicitly listed as follows:
\begin{itemize}
 \item [(i)] $(E_0,E_1,E_2)=(\co,\co(\vx_1),\co(\vec{c}))\;\Rightarrow\; (\widetilde{E_0},\widetilde{E_1},\widetilde{E_2})=(\co,\co(\vx_1)[-1],\co(\vec{c})[-2])$;
 \item [(ii)] $(E_0,E_1,E_2)=(\co,\co(\vec{c}),S_{1,0})\;\Rightarrow\; (\widetilde{E_0},\widetilde{E_1},\widetilde{E_2})=(\co,\co(\vec{c})[-1],S_{1,0}[-2])$;
 \item [(iii)] $(E_0,E_1,E_2)=(\co,S_{1,0},\co(\vx_1))\;\Rightarrow\; (\widetilde{E_0},\widetilde{E_1},\widetilde{E_2})=(\co,S_{1,0}[-1],\co(\vx_1)[-1])$;
 \item [(iv)] $(E_0,E_1,E_2)=(S_{1,1},\co,\co(\vec{c}))\;\Rightarrow\; (\widetilde{E_0},\widetilde{E_1},\widetilde{E_2})=(S_{1,1},\co,\co(\vec{c})[-1])$.
 \end{itemize}
\end{example}

\begin{lem}\label{12to02}
 Assume $\X$ has weight type (2). Let $(E_0,E_1,E_2)$ be an Ext-exceptional sequence in $D^b(\coh\X)$. If $\Hom(E_i,E_{i+1}[1])=0$ for $i=0$ or 1, then $\Hom(E_0,E_2[1])=0$.
\end{lem}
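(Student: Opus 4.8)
The plan is to reduce the statement to elementary bookkeeping with shift-exponents, after putting the sequence into the normal form of Remark~\ref{excep}. By that remark I may write $E_i=\widetilde{E_i}[k_i]$ for $i=0,1,2$ with $k_0\geq k_1\geq k_2$, where $(\widetilde{E_0},\widetilde{E_1},\widetilde{E_2})$ is one of the four normalized exceptional triples of Example~\ref{kij}, up to a common degree twist that affects none of the Hom-spaces below. Put $a=k_0-k_1\geq 0$ and $b=k_1-k_2\geq 0$, so that $k_0-k_2=a+b$. Each of the exceptional pairs $(\widetilde{E_0},\widetilde{E_1})$, $(\widetilde{E_1},\widetilde{E_2})$, $(\widetilde{E_0},\widetilde{E_2})$ admits a unique nonzero $\Hom$-degree (existence by the absence of Hom-orthogonal pairs in weight type $(2)$, uniqueness by \cite[Lem. 3.2.4]{HM}); denote these by $d_{01},d_{12},d_{02}$.

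First I would record the two hypotheses in terms of $a,b$. By construction in Remark~\ref{excep} one has $\Ext^1(\widetilde{E_0},\widetilde{E_1})\neq 0$ and $\Ext^1(\widetilde{E_1},\widetilde{E_2})\neq 0$, so $d_{01}=d_{12}=1$. Shifting,
\[\Hom(E_0,E_1[1])=\Hom(\widetilde{E_0},\widetilde{E_1}[1-a]),\qquad \Hom(E_1,E_2[1])=\Hom(\widetilde{E_1},\widetilde{E_2}[1-b]),\]
whence $\Hom(E_0,E_1[1])=0\iff a\geq 1$ and $\Hom(E_1,E_2[1])=0\iff b\geq 1$; the hypothesis of the lemma thus reads $a\geq 1$ or $b\geq 1$. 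In the same way
\[\Hom(E_0,E_2[1])=\Hom(\widetilde{E_0},\widetilde{E_2}[1-a-b]),\]
which is nonzero exactly when $a+b=1-d_{02}$.

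The crux is the inequality $d_{02}\geq 1$, which I would extract from Example~\ref{kij}: in every one of the four cases $\widetilde{E_0}$ is a genuine sheaf while $\widetilde{E_2}=G[-s]$ for some $G\in\coh\X$ and $s\in\{1,2\}$. Since $\coh\X$ is hereditary, $\Hom(\widetilde{E_0},\widetilde{E_2}[n])=\Hom(\widetilde{E_0},G[n-s])$ can be nonzero only for $n-s\in\{0,1\}$, hence only for $n\geq s\geq 1$; therefore $d_{02}\geq 1$, i.e. $1-d_{02}\leq 0$.

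Finally I would conclude. If $\Hom(E_0,E_2[1])\neq 0$, then $a+b=1-d_{02}\leq 0$, and combined with $a,b\geq 0$ this forces $a=b=0$, that is, both $\Hom(E_0,E_1[1])\neq 0$ and $\Hom(E_1,E_2[1])\neq 0$. Contrapositively, if $\Hom(E_i,E_{i+1}[1])=0$ for $i=0$ or $i=1$, then $\Hom(E_0,E_2[1])=0$, as desired. The only nonformal input is the bound $d_{02}\geq 1$, so I expect the verification that $\widetilde{E_2}$ is always a sheaf shifted down by $1$ or $2$ (equivalently, the exhaustiveness of the list in Example~\ref{kij}) to be the main point requiring care; everything else is manipulation of the exponents $a$ and $b$.
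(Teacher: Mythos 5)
Your proof is correct and follows essentially the same route as the paper's: both reduce to the normal form $(\widetilde{E_0}[k_0],\widetilde{E_1}[k_1],\widetilde{E_2}[k_2])$ of Remark~\ref{excep} and compare shift exponents, the conclusion resting on the fact that the unique nonzero Hom-degree between $\widetilde{E_0}$ and $\widetilde{E_2}$ is at least $1$. Your version merely makes explicit, via the heredity of $\coh\X$, the inequality $d_{02}\geq 1$ that the paper leaves as an appeal to Example~\ref{kij}, and treats the two hypotheses uniformly through $a+b$ rather than by a ``similarly''.
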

\begin{proof} Assume $(E_0,E_1,E_2)$ has the form (\ref{ext-exc law}). If $\Hom(E_1,E_2[1])=0$, then $k_1>k_2$. It follows that $k_0>k_2$. Hence we have $\Hom(E_0,E_2[1])=0$ by Example \ref{kij}. Similarly, if $\Hom(E_0,E_1[1])=0$, we also have $\Hom(E_0,E_2[1])=0$.
\end{proof}
\section{Finest t-stability}

In this section, we first recall the definition and some basic
results of (finest) t-stability for a triangulated category
$\mathcal{C}$ in the sense of Gorodentsev--Kuleshov--Rudakov, and
describe a sufficient and necessary condition for a t-stability to
be finest. Furthermore, for the bounded derived category of coherent
sheaves on the weighted projective line of weight type (2), we
describe the semistable subcategories, as well as the final HN
triangles for certain coherent sheaves. Finally, we introduce the
notion of t-exceptional triples and prove their existence.

\subsection{t-stability}
\begin{defn}{\rm(}\cite[Def. 3.1]{GKR}{\rm)}
Let $\Phi$ be a linearly ordered set and
$\Pi_\varphi\subset\mathcal{C}$ be a strictly full extension-closed
non-empty subcategory for each $\varphi\in\Phi$. The pair
$(\Phi,\{\Pi_\varphi\}_{\varphi\in\Phi})$ is called a t-stability if

\begin{itemize}
  \item [(i)] the grading shift functor $X\mapsto X[1]$ acts on $\Phi$ as a non-decreasing automorphism, that is, there is a bijection $\tau_{\Phi}\in \;{\rm{Aut}}\;\Phi$ such that $\Pi_{\tau_{\Phi}(\varphi)}=\Pi _{\varphi}[1]$ and $\tau_{\Phi}(\varphi)\geq\varphi$ for all $\varphi$;
  \item [(ii)] ${\rm{Hom}}^{\leq 0}(\Pi _{\psi},\Pi_{\varphi})=0$ for all $\psi>\varphi$ in $\Phi$;
  \item [(iii)] for each non-zero object $X\in\mathcal{C}$, there exists a sequence of triangles
\begin{equation}\label{HN-filt}
\xymatrix { 0=E_0\ar[rr] &&E_1\ar[dl]\ar[rr]&&E_2\ar[dl]\ar[r]&\cdots\ar[r]&E_{n-1}\ar[rr]&&E_n=X,\ar[dl]\\
&A_1\ar@{-->}[ul]&&A_2\ar@{-->}[ul]&&&&A_n\ar@{-->}[ul] }
\end{equation}
where $A_j\in\Pi_{\varphi_j}$ with $\varphi _i>\varphi
_j,\forall\,1\leq i<j\leq n$.
\end{itemize}
\end{defn}

It has been shown in \cite{GKR} that the decomposition
$\eqref{HN-filt}$ for each $X$ is unique up to isomorphism, which
is known as the \emph{Harder-Narasimhan filtration} (\emph{HN
filtration} for short) of $X$. Define $\varphi^{-}(X):=\varphi_n$
and $\varphi^{+}(X):=\varphi_1$. Then $X\in\Pi_{\varphi}$ if and
only if $\varphi^{-}(X)=\varphi^{+}(X)=\varphi=:\varphi(X)$. The
categories $\Pi _\varphi$ are called the semistable subcategories of
the t-stability $(\Phi,\{\Pi_\varphi\}_{\varphi\in\Phi})$. Note that
each $\Pi _\varphi$ is closed under extensions and direct summands,
but, in general, not abelian. The nonzero objects in $\Pi _\varphi$ are
said to be semistable of phase $\varphi$, while the minimal objects
are said to be stable. For any interval $I\subseteq \Phi$,
$\Pi_{I}$ is defined to be the extension-closed subcategory of
$\mathcal{C}$ generated by the subcategories $\Pi_{\varphi},
\;\varphi\in I$.

\begin{prop}\label{t-stability-t-structure}{\rm(}\cite[Cor. 5.2]{GKR}{\rm)}
 Let $(\Phi,\{\Pi_{\varphi}\}_{\varphi\in\Phi})$ be a t-stability on $\mathcal{C}$. Then each $\varphi\in\Phi$ determines two t-structures $\cal{A}_{\varphi},\cal{B}_{\varphi}$ such that
$$\cal{A}^{\geq 0}_{\varphi}=\langle\Pi_{\psi}\mid\psi\leq\tau_{\Phi}(\varphi)\rangle,\quad\cal{A}^{\leq 0}_{\varphi}=\langle\Pi_{\psi}\mid\psi>\varphi\rangle;$$
$$\cal{B}^{\geq 0}_{\varphi}=\langle\Pi_{\psi}\mid\psi<\tau_{\Phi}(\varphi)\rangle,\quad\cal{B}^{\leq 0}_{\varphi}=\langle\Pi_{\psi}\mid\psi\geq\varphi\rangle.$$
Moreover, the corresponding hearts are given by
$\Pi_{(\varphi,\tau_{\Phi}(\varphi)]}$ and
$\Pi_{[\varphi,\tau_{\Phi}(\varphi))}$, respectively.
%$\Pi_{(\tau_{\Phi}^{-1}\varphi,\varphi]}:=\langle \Pi_{\psi}\mid \tau_{\Phi}^{-1}\varphi< \psi\leq\varphi\rangle$ and $\Pi_{[\tau_{\Phi}^{-1}\varphi,\varphi)}:=\langle \Pi_{\psi}\mid \tau_{\Phi}^{-1}\varphi\leq \psi<\varphi\rangle$ respectively.
\end{prop}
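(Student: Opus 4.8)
The plan is to check directly that each pair $\cal{A}_\varphi=(\cal{A}^{\leq 0}_\varphi,\cal{A}^{\geq 0}_\varphi)$ and $\cal{B}_\varphi=(\cal{B}^{\leq 0}_\varphi,\cal{B}^{\geq 0}_\varphi)$ satisfies the three axioms of a t-structure; since the two cases differ only by interchanging strict and non-strict inequalities at the endpoints $\varphi$ and $\tau_\Phi(\varphi)$, I would carry out the argument for $\cal{A}_\varphi$ and merely indicate the changes for $\cal{B}_\varphi$. Both defining subcategories are extension-closed by construction, so the real content is (a) closure under the shift, (b) $\Hom$-orthogonality, and (c) the existence of truncation triangles. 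As a preparatory step I would compute the shifted aisles: from $\Pi_\psi[1]=\Pi_{\tau_\Phi(\psi)}$ together with $\tau_\Phi(\psi)\geq\psi$ and the fact that $\tau_\Phi$ is an order automorphism, one gets $\cal{A}^{\leq 0}_\varphi[1]\subseteq\cal{A}^{\leq 0}_\varphi$ and $\cal{A}^{\geq 0}_\varphi[-1]\subseteq\cal{A}^{\geq 0}_\varphi$ immediately, and moreover $\cal{A}^{\geq 1}_\varphi:=\cal{A}^{\geq 0}_\varphi[-1]=\langle\Pi_\psi[-1]\mid\psi\leq\tau_\Phi(\varphi)\rangle=\langle\Pi_\chi\mid\chi\leq\varphi\rangle$, using $\Pi_\psi[-1]=\Pi_{\tau_\Phi^{-1}(\psi)}$.

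For the orthogonality (b) I must show $\Hom(\cal{A}^{\leq 0}_\varphi,\cal{A}^{\geq 1}_\varphi)=0$. On the generators this is forced by the t-stability axiom (ii): if $\psi>\varphi$ and $\chi\leq\varphi$ then $\psi>\chi$, so $\Hom^{\leq 0}(\Pi_\psi,\Pi_\chi)=0$ and in particular $\Hom(\Pi_\psi,\Pi_\chi)=0$. Since $\Hom(-,Y)$ and $\Hom(X,-)$ are cohomological, this vanishing propagates along the distinguished triangles defining the two extension closures, giving $\Hom(X,Y)=0$ for all $X\in\cal{A}^{\leq 0}_\varphi$ and $Y\in\cal{A}^{\geq 1}_\varphi$.

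The core step is (c). Given a nonzero $X$, I would take its HN filtration (\ref{HN-filt}), with factors $A_j\in\Pi_{\varphi_j}$ and $\varphi_1>\cdots>\varphi_n$, and split it at the threshold $\varphi$. Let $k$ be the number of indices with $\varphi_j>\varphi$, so that $\varphi_k>\varphi\geq\varphi_{k+1}$, and put $A=E_k$. The first $k$ triangles exhibit $A$ as an iterated extension of $A_1,\dots,A_k$, so $A\in\langle\Pi_\psi\mid\psi>\varphi\rangle=\cal{A}^{\leq 0}_\varphi$; completing the composite $A=E_k\to E_n=X$ to a triangle $A\to X\to B\to A[1]$ and applying the octahedral axiom to the remaining triangles $E_{j-1}\to E_j\to A_j$, the object $B$ acquires a filtration with factors $A_{k+1},\dots,A_n$, whence $B\in\langle\Pi_\chi\mid\chi\leq\varphi\rangle=\cal{A}^{\geq 1}_\varphi$. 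This is the desired truncation triangle (with $A=0$ when $k=0$ and $B=0$ when $k=n$). Splitting (\ref{HN-filt}) instead at the threshold separating phases $\geq\varphi$ from phases $<\varphi$ gives the truncation triangles for $\cal{B}_\varphi$.

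It remains to identify the hearts. For $\cal{A}_\varphi$ the heart is $\cal{A}^{\leq 0}_\varphi\cap\cal{A}^{\geq 0}_\varphi$, and I claim it is $\Pi_{(\varphi,\tau_\Phi(\varphi)]}$; the inclusion $\supseteq$ is immediate from the two defining descriptions. For $\subseteq$ I would invoke the characterization that, for an interval $I\subseteq\Phi$, the subcategory $\langle\Pi_\psi\mid\psi\in I\rangle$ consists precisely of the nonzero objects all of whose HN factors have phase in $I$. Applied to $(\varphi,\infty)$ and $(-\infty,\tau_\Phi(\varphi)]$ this yields $\varphi^-(X)>\varphi$ and $\varphi^+(X)\leq\tau_\Phi(\varphi)$ for $X$ in the intersection, so every HN factor of $X$ lies in $(\varphi,\tau_\Phi(\varphi)]$ and hence $X\in\Pi_{(\varphi,\tau_\Phi(\varphi)]}$; the heart of $\cal{B}_\varphi$ comes out as $\Pi_{[\varphi,\tau_\Phi(\varphi))}$ after using the half-open intervals $[\varphi,\infty)$ and $(-\infty,\tau_\Phi(\varphi))$. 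I expect this interval characterization to be the main obstacle, since one direction is the extension-closedness already used, while the other amounts to the monotonicity of the extremal phases under extensions, namely $\varphi^-(W)\geq\min(\varphi^-(Y),\varphi^-(Z))$ and $\varphi^+(W)\leq\max(\varphi^+(Y),\varphi^+(Z))$ for every triangle $Y\to W\to Z\to Y[1]$, which is the one point where uniqueness of the HN filtration and the orthogonality (ii) must be combined rather than applied formally.
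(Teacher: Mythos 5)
The paper offers no proof of this proposition at all: it is imported verbatim as \cite[Cor.~5.2]{GKR}, so there is nothing internal to compare your argument against. Your direct verification is correct and essentially reproduces the standard argument: the shift computations $\cal{A}^{\geq 1}_{\varphi}=\langle\Pi_{\chi}\mid\chi\leq\varphi\rangle$ and $\cal{B}^{\geq 1}_{\varphi}=\langle\Pi_{\chi}\mid\chi<\varphi\rangle$ are right, the orthogonality does propagate through both extension closures because the left and right orthogonals of an object are extension-closed, and splitting the HN filtration at the threshold $\varphi$ gives the truncation triangles via the octahedral axiom. The one point you flag as the ``main obstacle'' --- that $\langle\Pi_{\psi}\mid\psi\in I\rangle$ consists exactly of the objects whose HN factors all have phase in $I$ --- is indeed the only step requiring a real argument, and it is worth recording how it closes: using axiom (ii) one checks inductively along the triangles $E_{j-1}\to E_j\to A_j$ that $\Hom(A_1,X)\cong\Hom(A_1,A_1)\neq 0$ and dually $\Hom(X,A_n)\neq 0$ for the top and bottom HN factors $A_1,A_n$; if $X$ lay in the extension closure of semistables with phases confined to $I$, these nonvanishing Homs force $\varphi^{+}(X)\leq\psi$ and $\varphi^{-}(X)\geq\psi'$ for some $\psi,\psi'\in I$, and order-convexity of $I$ traps every HN phase of $X$ inside $I$. (Note this is slightly stronger than what Lemma \ref{similar result as stab condition} of the paper records, so you cannot simply quote that lemma; the inductive $\Hom$ computation is genuinely needed.) With that supplied, your proof is complete and self-contained, which is arguably preferable to the paper's bare citation.
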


Using a proof similar to that of \cite[Lem.~3.4]{TB}, we have the
following result.
\begin{lem}\label{similar result as stab condition} Let $(\Phi,\{\Pi_{\varphi}\}_{\varphi\in\Phi})$ be a t-stability on $\mathcal{C}$. Assume there exists some $\varphi_0\in\Phi$ such that all the objects in the triangle
$A\to X\to B$ are nonzero in
$\Pi_{[\varphi_0,\tau_{\Phi}(\varphi_0)]}$, then
$$\varphi^{+}(A)\leq \varphi^{+}(X)\;\text{ and }\;\varphi^{-}(X)\leq \varphi^{-}(B).$$
\end{lem}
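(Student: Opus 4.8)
The plan is to establish the two inequalities independently; they are formally dual (replace the triangle $A\to X\to B$ by its rotation and reverse the order on $\Phi$), so I would spell out $\varphi^{+}(A)\le\varphi^{+}(X)$ in detail and then indicate the symmetric argument for $\varphi^{-}(X)\le\varphi^{-}(B)$. The only structural input needed is the Hom-vanishing axiom (ii) of a t-stability, combined with the observation that, since $A,X,B$ all lie in the \emph{closed} interval $\Pi_{[\varphi_0,\tau_\Phi(\varphi_0)]}$ (an interval of length one for the shift), every HN-phase of $B[-1]$ is $\le\varphi_0$ whereas every HN-phase of $A[1]$ is $\ge\tau_\Phi(\varphi_0)$. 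Here I use that $\tau_\Phi$ is an order-automorphism of $\Phi$ and that shifting a HN filtration by $[\pm1]$ applies $\tau_\Phi^{\pm1}$ to all of its phases.

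For $\varphi^{+}(A)\le\varphi^{+}(X)$ I would set $\psi=\varphi^{+}(A)$. If $\psi=\varphi_0$ the inequality is immediate, since $\varphi^{+}(X)\ge\varphi^{-}(X)\ge\varphi_0=\psi$; so assume $\psi>\varphi_0$. From the HN filtration of $A$ I extract the top factor $A_1\in\Pi_{\psi}$ together with the triangle $A_1\xrightarrow{i}A\to A'$ whose third term $A'$ has all HN-phases $<\psi$. First I check $i\neq0$: applying $\Hom(A_1,-)$ to this triangle and using $\Hom^{\le0}(\Pi_{\psi},\Pi_{\varphi})=0$ for $\varphi<\psi$ gives $\Hom(A_1,A'[-1])=0$, so $i_{*}$ is injective and sends $\mathrm{id}_{A_1}$ to the nonzero element $i$. (This nonvanishing of the canonical map out of the maximal semistable factor is standard and could instead be quoted from \cite{GKR}.)

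Next I compose to obtain $g=(A\to X)\circ i\colon A_1\to X$ and split into two cases. If $g\neq0$, a nonzero morphism out of $A_1\in\Pi_{\psi}$ forces some HN-factor of $X$ to have phase $\ge\psi$: otherwise $X$ would have all HN-phases $<\psi$ and axiom (ii) would give $\Hom(A_1,X)=0$. Hence $\varphi^{+}(X)\ge\psi$, as desired. If instead $g=0$, then $i$ factors through the connecting map $B[-1]\to A$ of the rotated triangle $B[-1]\to A\to X$, yielding a \emph{nonzero} map $A_1\to B[-1]$ (nonzero because $i\neq0$). But $B[-1]$ has all HN-phases $\le\varphi_0<\psi$, so axiom (ii) forces $\Hom(A_1,B[-1])=0$, a contradiction; thus the case $g=0$ cannot occur. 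This proves $\varphi^{+}(A)\le\varphi^{+}(X)$. For $\varphi^{-}(X)\le\varphi^{-}(B)$ I argue dually with $\chi=\varphi^{-}(B)$: take the bottom factor $B_n\in\Pi_{\chi}$ with its nonzero map $p\colon B\to B_n$, compose with $X\to B$, and in the vanishing case factor $p$ through $B\to A[1]$ to produce a nonzero map $A[1]\to B_n$, contradicting $\Hom(A[1],B_n)=0$ since $A[1]$ has all HN-phases $\ge\tau_\Phi(\varphi_0)>\chi$.

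The step I expect to demand the most care is the \emph{vanishing case} of the dichotomy: one must justify the factorization through the rotated triangle and the nonvanishing of the factored map, and then locate the HN-phases of $B[-1]$ (respectively $A[1]$) precisely enough to apply axiom (ii). Both points rest squarely on the hypothesis that $A,X,B$ lie in a closed interval of length one, which is exactly what forces $\varphi^{+}(B[-1])\le\varphi_0<\psi$; without it the vanishing case could genuinely arise. The residual bookkeeping, namely order-preservation of $\tau_\Phi$ under shifts and the long-exact-sequence computation of the relevant $\Hom$-groups from HN factors, is routine.
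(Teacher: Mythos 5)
Your argument is correct and is precisely the adaptation of Bridgeland's proof of \cite[Lem.~3.4]{TB} that the paper invokes without spelling out: extract the extreme HN factor, test it against the rotated triangle, and use the closed-interval hypothesis to force $\Hom(A_1,B[-1])=0$ (resp.\ $\Hom(A[1],B_n)=0$) in the vanishing case. Your handling of the boundary case $\psi=\varphi_0$ (resp.\ $\chi=\tau_\Phi(\varphi_0)$), where that Hom-vanishing is not available but the inequality is trivial, is the one point requiring extra care for the closed interval, and you have it right.
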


\subsection{Finest t-stability}
We recall the definition of a partial order for t-stabilities given in \cite{GKR}.
\begin{defn} Let $(\Phi,\{\Pi_{\varphi}\}_{\varphi\in\Phi}),(\Psi,\{P_{\psi}\}_{\psi\in\Psi})$ be t-stabilities on $\mathcal{C}$ and let the grading shift functor act on $\Phi,\Psi$ by automorphisms $\tau_{\Phi},\tau_{\Psi}$ respectively. We say that the t-stability $(\Phi,\{\Pi_{\varphi}\}_{\varphi\in\Phi})$ is finer than $(\Psi,\{P_{\psi}\}_{\psi\in\Psi})$ if there exists a surjective map $r:\Phi\rightarrow\Psi$ such that
\begin{itemize}
  \item [(i)] $r\tau_{\Phi}=\tau_{\Psi}r$;
  \item [(ii)] $\varphi'>\varphi''$ implies $ r(\varphi')\geq r(\varphi'')$;
  \item [(iii)] for any $\psi\in\Psi$, $P_{\psi}=\langle\Pi_\varphi\mid\varphi\in r^{-1}(\psi)\rangle$.
\end{itemize}
\end{defn}

Minimal elements with respect to this partial order will be called
the finest t-stabilities. In this subsection we will give an
equivalent description of finest t-stabilities for a triangulated
category $\mathcal{C}$ which is piecewise hereditary. That is, we
assume that there exists a hereditary abelian category $\mathcal{H}$
such that there is a triangulated equivalence $\mathcal{C}\cong
D^b(\mathcal{H})$ in the rest of this subsection.

\begin{lem}\label{strictly increasing} Let $(\Phi,\{\Pi_{\varphi}\}_{\varphi\in\Phi})$ be a finest t-stability on $\mathcal{C}$. Then $\tau _{\Phi}$ is strictly increasing.
\end{lem}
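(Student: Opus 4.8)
The plan is to argue by contradiction and rule out any order--inversion under $\tau_{\Phi}$. By condition~(i) in the definition of a t-stability, $\tau_{\Phi}$ is a bijection of $\Phi$ with $\tau_{\Phi}(\varphi)\geq\varphi$ for every $\varphi$. Hence $\tau_{\Phi}$ fails to be strictly increasing exactly when there are $\varphi_1<\varphi_2$ in $\Phi$ with $\tau_{\Phi}(\varphi_1)\geq\tau_{\Phi}(\varphi_2)$; since $\tau_{\Phi}$ is injective this forces $\tau_{\Phi}(\varphi_1)>\tau_{\Phi}(\varphi_2)$. Combining with $\tau_{\Phi}(\varphi_2)\geq\varphi_2$, I obtain the single chain $\varphi_1<\varphi_2\leq\tau_{\Phi}(\varphi_2)<\tau_{\Phi}(\varphi_1)$ in $\Phi$, and the whole proof consists of turning this chain into a contradiction.

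The key observation is that both $\varphi_2$ and $\tau_{\Phi}(\varphi_2)$ then lie in the half-open window $(\varphi_1,\tau_{\Phi}(\varphi_1)]$: from the chain, $\varphi_1<\varphi_2<\tau_{\Phi}(\varphi_1)$ and $\varphi_1<\tau_{\Phi}(\varphi_2)<\tau_{\Phi}(\varphi_1)$. By Proposition~\ref{t-stability-t-structure} applied to $\varphi_1$, the subcategory $\mathcal{G}:=\Pi_{(\varphi_1,\tau_{\Phi}(\varphi_1)]}$ is the heart of a bounded t-structure on $\mathcal{C}$. Since $\Pi_{\tau_{\Phi}(\varphi_2)}=\Pi_{\varphi_2}[1]$ and both indices lie in the window, the heart $\mathcal{G}$ contains $\Pi_{\varphi_2}$ and $\Pi_{\varphi_2}[1]$ simultaneously. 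Choosing a nonzero $Y\in\Pi_{\varphi_2}$, the object $Y[1]$ then lies in $\mathcal{G}\cap\mathcal{G}[1]$. But the heart of any t-structure satisfies $\mathcal{G}\cap\mathcal{G}[1]=0$ (an object in both has vanishing cohomology in every degree with respect to that t-structure), so $Y[1]=0$ and hence $Y=0$, a contradiction. Therefore no inversion occurs and $\tau_{\Phi}$ is strictly increasing.

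The only step that genuinely uses the running hypotheses is the choice of a nonzero $Y\in\Pi_{\varphi_2}$, i.e.\ the fact that the phase $\varphi_2$ is actually occupied rather than a vacuous label with $\Pi_{\varphi_2}=0$; this is where I expect the main obstacle to lie. Here the finest hypothesis on $(\Phi,\{\Pi_{\varphi}\})$ should be invoked to guarantee that every phase carries a nonzero semistable object, the piecewise hereditary identification $\mathcal{C}\cong D^b(\mathcal{H})$ being what makes the hearts $\Pi_{(\varphi,\tau_{\Phi}(\varphi)]}$ of Proposition~\ref{t-stability-t-structure} concrete bounded-t-structure hearts to which the vanishing $\mathcal{G}\cap\mathcal{G}[1]=0$ applies. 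Pinning down this occupancy---rather than the homological core, which is the window containment together with $\mathcal{G}\cap\mathcal{G}[1]=0$---is the delicate bookkeeping I would carry out last.
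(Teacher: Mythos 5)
There is a genuine gap: you have proved the wrong statement. In this paper ``strictly increasing'' is the strict form of the condition in axiom~(i) of the definition of a t-stability, where ``non-decreasing'' is explicitly glossed as $\tau_{\Phi}(\varphi)\geq\varphi$ for all $\varphi$; so the lemma asserts that $\tau_{\Phi}$ has \emph{no fixed points}, i.e.\ $\tau_{\Phi}(\varphi)>\varphi$ for every $\varphi$ (and this is exactly what is used afterwards, e.g.\ to make the interval $(\tau_{\Phi}^{-1}(\psi),\psi]$ nondegenerate in Lemma~\ref{hom non trivial}). Your argument instead rules out an order-inversion $\varphi_1<\varphi_2$ with $\tau_{\Phi}(\varphi_1)>\tau_{\Phi}(\varphi_2)$. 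Since $\tau_{\Phi}\in\Aut\Phi$ is an automorphism of the linearly ordered set $\Phi$, such inversions are excluded by definition, so what you prove is essentially vacuous; and a fixed point $\tau_{\Phi}(\psi)=\psi$ produces no inversion, so your contradiction never gets off the ground in the case that actually matters. Your window trick also cannot be salvaged directly: to trap $\Pi_{\psi}$ and $\Pi_{\psi}[1]=\Pi_{\tau_{\Phi}(\psi)}$ in a single heart $\Pi_{(\varphi,\tau_{\Phi}(\varphi)]}$ one needs some $\varphi<\psi$ with $\tau_{\Phi}(\varphi)\geq\psi$, and nothing guarantees such a $\varphi$ exists (the window based at $\psi$ itself is the empty interval $(\psi,\psi]$).

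The paper's proof is of a completely different nature and is where the two hypotheses you relegate to ``bookkeeping'' actually do all the work. Assuming $\tau_{\Phi}(\psi)=\psi$, the subcategory $\Pi_{\psi}$ is closed under the shift $[1]$, and using the piecewise hereditary structure $\mathcal{C}\cong D^b(\mathcal{H})$ one decomposes $\Pi_{\psi}=\bigcup_{i\in\mathbb{Z}}\bigl(\Pi_{\psi}\cap\mathcal{H}[i]\bigr)$ into infinitely many new phases $\psi_i$, ordered by $i$ and permuted by the shift. This yields a t-stability strictly finer than $(\Phi,\{\Pi_{\varphi}\})$, contradicting the hypothesis that the given t-stability is finest. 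Note that minimality with respect to the refinement order is the engine of the whole proof, not merely a device to guarantee that the phase $\psi$ is occupied (occupancy is already built into the definition, which requires each $\Pi_{\varphi}$ to be non-empty). To repair your write-up you would need to replace the heart/window argument entirely by such a refinement construction.
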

\begin{proof} Suppose there exists $\psi\in\Phi$ such that $\tau _{\Phi}(\psi)=\psi$. Define $\Pi_{\psi_i}=\Pi_\psi\cap(\mathcal{H}[i])$ for all $i\in\mathbb{Z}$. Then $\Pi_{\psi}=\cup_{i\in\mathbb{Z}}\Pi_{\psi_i}$. Set
$$\Psi:=\big(\Phi\setminus\{\psi\}\big)\bigcup\{\psi_{i}\mid i\in\bbZ\}.$$
%$$\Psi=\big(\Phi\backslash\{\tau_{\Phi}^n(\varphi)\mid n\in\bbZ\}\big)\bigcup\{\tau_{\Phi}^{n}(\psi_i)\mid i,n\in\bbZ\}$$
Define an automorphism $\tau_{\Psi}$ on $\Psi$ by setting
$$\tau_{\Psi}(\psi_{i})=\psi_{i+1},\forall\; i\in\bbZ;\;\tau_{\Psi}(\varphi)=\tau_{\Phi}(\varphi),\forall
\varphi\in\Phi\setminus\{\psi\}.$$ We define a linear order on
$\Psi$ by keeping the order relations in $\Phi\setminus\{\psi\}$ and
adding new order relations
$$\varphi_1<\psi_i<\psi_{i+1}<\varphi_2\;\;(\forall\,i\in\bbZ)$$
 whenever $\varphi_1<\psi<\varphi_2$ with $\varphi_1,\varphi_2\in\Phi\setminus\{\psi\}$.
Then $(\Psi,\{\Pi_{\varphi}\}_{\varphi\in\Psi})$ is a t-stability
which is strictly finer than
$(\Phi,\{\Pi_{\Phi}\}_{\varphi\in\Phi})$, a contradiction.
\end{proof}

\begin{lem}\label{hom non trivial} Let $(\Phi,\{\Pi_{\varphi}\}_{\varphi\in\Phi})$ be a t-stability on $\mathcal{C}$. If there exist some $\psi\in\Phi$ and $X,Y\in\Pi_{\psi}$ such that $\Hom(X,Y)=0$, then $(\Phi,\{\Pi_{\varphi}\}_{\varphi\in\Phi})$ is not finest.
\end{lem}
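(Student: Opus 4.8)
The plan is to prove the contrapositive constructively: starting from the vanishing $\Hom(X,Y)=0$ (with $X,Y$ nonzero), I would build a t-stability that is strictly finer than $(\Phi,\{\Pi_\varphi\}_{\varphi\in\Phi})$ by splitting the phase $\psi$, together with its whole shift-orbit $\{\tau_\Phi^{n}(\psi)\}$, into two consecutive phases. First I would dispose of a degenerate case: if $\tau_\Phi$ fixes $\psi$, then the construction in the proof of Lemma~\ref{strictly increasing} already yields a strict refinement, so we are done; hence I may assume $\tau_\Phi(\psi)>\psi$, which makes the elements of the orbit pairwise distinct. By Proposition~\ref{t-stability-t-structure} the interval $\mathcal{B}:=\Pi_{[\psi,\tau_\Phi(\psi))}$ is the heart of a bounded t-structure, in particular an abelian category containing $\Pi_\psi$. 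Applying Lemma~\ref{similar result as stab condition} to a short exact sequence $0\to A\to Z\to B\to 0$ in $\mathcal{B}$ with $Z\in\Pi_\psi$ gives $\varphi^{+}(A)\le\psi\le\varphi^{-}(B)$, whence $A\in\Pi_\psi$; thus $\Pi_\psi$ is a torsion-free class in $\mathcal{B}$ (closed under subobjects and extensions).

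Next I would construct the torsion pair separating $X$ from $Y$ inside $\Pi_\psi$. Let $\mathcal{T}\subseteq\Pi_\psi$ be the smallest subcategory containing $X$ that is closed under extensions and under those quotients that lie in $\Pi_\psi$, and set $\mathcal{F}:=\{W\in\Pi_\psi\mid\Hom(T,W)=0\ \text{for all}\ T\in\mathcal{T}\}$. Using left-exactness of $\Hom(-,W)$ one checks that $\Hom(\mathcal{T},Y)=0$, so $Y\in\mathcal{F}$, while $X\in\mathcal{T}\setminus\mathcal{F}$ because $\Hom(X,X)\neq0$; in particular $\mathcal{T}\neq\mathcal{F}$ and the intended splitting is genuine. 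Declaring $\mathcal{T}$ the higher phase $\psi''$ and $\mathcal{F}$ the lower phase $\psi'$, axiom~(ii) for the pair $\psi''>\psi'$ reduces to $\Hom^{\leq 0}(\mathcal{T},\mathcal{F})=0$: the degree-$0$ part vanishes by definition of $\mathcal{F}$, and the negative part vanishes since $\mathcal{T},\mathcal{F}\subseteq\mathcal{B}$ and $\Hom^{<0}$ vanishes between objects of a heart. The orientation is consistent, as $\Hom(X,Y)=0$ is exactly the required vanishing from the higher to the lower phase.

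Finally I would assemble the refinement. Set $\Psi:=\big(\Phi\setminus\{\tau_\Phi^{n}(\psi)\mid n\in\mathbb{Z}\}\big)\cup\{\tau_\Phi^{n}(\psi'),\tau_\Phi^{n}(\psi'')\mid n\in\mathbb{Z}\}$, equip it with the linear order that keeps all old relations, inserts $\psi'<\psi''$ in the slot previously occupied by $\psi$, and is propagated by the shift; define the automorphism by $\tau_\Psi(\tau_\Phi^{n}(\psi'))=\tau_\Phi^{n+1}(\psi')$ and likewise for $\psi''$, and put $\Pi_{\tau_\Phi^{n}(\psi')}=\mathcal{F}[n]$, $\Pi_{\tau_\Phi^{n}(\psi'')}=\mathcal{T}[n]$. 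The surjection $r\colon\Psi\to\Phi$ collapsing $\psi',\psi''\mapsto\psi$ is order-preserving and commutes with the shifts, so it witnesses that $\Psi$ is finer; being non-injective, it is strictly finer, and hence $(\Phi,\{\Pi_\varphi\})$ is not finest. Axiom~(ii) for the new phases against the unchanged phases is inherited from the original one since $\mathcal{T},\mathcal{F}\subseteq\Pi_\psi$.

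What remains is axiom~(iii): every nonzero object must admit an HN filtration for $\Psi$. I would obtain it by refining the given HN filtration, replacing each factor $A_j\in\Pi_{\tau_\Phi^{n}(\psi)}$ by a two-step filtration $0\to T_j\to A_j\to F_j\to 0$ with $T_j\in\mathcal{T}$ and $F_j\in\mathcal{F}$ (shifted by $n$), and splicing the resulting triangles into the tower. The main obstacle is precisely the existence of this decomposition with both factors \emph{inside} $\Pi_\psi$: since $\Pi_\psi$ is in general not closed under quotients, one must show that the torsion subobject $T_j\subseteq A_j$ has quotient $A_j/T_j$ still lying in $\Pi_\psi$ and right-orthogonal to $\mathcal{T}$. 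I expect to secure this from the piecewise hereditary hypothesis together with $\Hom$-finiteness, which make the torsion class generated by $X$ functorial and ensure it splits off within the torsion-free class $\Pi_\psi\subseteq\mathcal{B}$; this finiteness-driven construction of the torsion pair is where the real work lies.
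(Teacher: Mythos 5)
Your overall strategy---splitting the phase $\psi$ (and its whole shift orbit) into two consecutive phases via a torsion pair governed by the vanishing of $\Hom(X,-)$---is the same as the paper's, but your choice of heart creates a genuine gap, which you yourself flag in your last paragraph and do not close. You work in $\mathcal{B}=\Pi_{[\psi,\tau_\Phi(\psi))}$, where $\psi$ is the \emph{minimal} phase; Lemma~\ref{similar result as stab condition} then makes $\Pi_\psi$ closed under subobjects in $\mathcal{B}$ but not under quotients. Consequently, for $Z\in\Pi_\psi$ the torsion subobject $tZ\subseteq Z$ lies in $\Pi_\psi$ for free, but the quotient $Z/tZ$ may a priori acquire HN factors of phase $>\psi$ (exactly as a quotient of a semistable sheaf can have larger slope). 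Nothing in your setup excludes this: the only thing you know about $Z/tZ$ is that $\Hom(X,Z/tZ)=0$, and a subobject $A_1\subseteq Z/tZ$ with $\varphi(A_1)>\psi$ is perfectly compatible with that, since axiom (ii) of a t-stability controls $\Hom^{\leq 0}(\Pi_{\varphi_1},\Pi_\psi)$ for $\varphi_1>\psi$ and says nothing about maps out of $\Pi_\psi$ into higher phases. Appealing to piecewise heredity and Hom-finiteness does not address this: the issue is not the existence or functoriality of the torsion pair but the \emph{location} of its quotient. (Your ad hoc description of $\mathcal{T}$ as a closure under ``those quotients that lie in $\Pi_\psi$'' also leaves unverified that every object actually decomposes.)

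The paper closes exactly this gap by choosing the other heart from Proposition~\ref{t-stability-t-structure}, namely $\mathcal{A}=\Pi_{(\tau_\Phi^{-1}(\psi),\psi]}$, in which $\psi$ is the \emph{maximal} phase. There $\Pi_\psi$ is closed under quotients, so the quotient $Z_1=Z/Z_2$ of $Z\in\Pi_\psi$ stays in $\Pi_\psi$ automatically (equivalently, from $\Hom(Z,\Pi_\varphi)=0$ for $\varphi<\psi$), and the remaining point---that the torsion subobject lies in $\Pi_\psi$---follows because every strictly smaller phase $\Pi_\varphi$ with $\varphi\in(\tau_\Phi^{-1}(\psi),\psi)$ is contained in $\mathcal{F}=\{W\in\mathcal{A}\mid\Hom(X,W)=0\}$ by axiom (ii), which forces $\mathcal{T}=\{W\in\mathcal{A}\mid\Hom(W,\mathcal{F})=0\}$ to be contained in $\Pi_\psi$. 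If you redo your second and third paragraphs with this heart, the rest of your argument---the treatment of the case $\tau_\Phi(\psi)=\psi$ via Lemma~\ref{strictly increasing}, the assembly of $\Psi$ with its order and shift automorphism, the collapsing map $r$, and the inheritance of axiom (ii) against the unchanged phases---is correct and matches the paper.
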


\begin{proof} By Lemma \ref{strictly increasing}, we can assume that $\tau _{\Phi}$ is strictly increasing. Denote by $\mathcal{A}:=
\Pi_{(\tau_{\Phi}^{-1}(\psi),\psi]}$. By Proposition
\ref{t-stability-t-structure}, $\mathcal{A}$ is abelian. Let
$$\mathcal{F}=\{Z\in\mathcal{A}\mid \Hom(X,Z)=0\};\quad \mathcal{T}=\{W\in\mathcal{A}\mid \Hom(W,Z)=0, \forall Z\in\mathcal{F}\}.$$
Then $(\mathcal{T},\mathcal{F})$ forms a torsion pair in
$\mathcal{A}$. By definition, $Y\in\mathcal{F}$ and
$X\in\mathcal{T}$. Moreover, we have
$\Pi_{(\tau_{\Phi}^{-1}(\psi),\psi)}\subseteq\mathcal{F}$, which
ensures that  $\mathcal{T}\subseteq\Pi_{\psi}$.

Set
$$\Psi:=\big(\Phi\setminus\{\tau_{\Phi}^{n}(\psi)\mid n\in\bbZ\}\big)\bigcup\{\psi_{1,n},\psi_{2,n}\mid n\in\bbZ\}.$$
Define an automorphism $\tau_{\Psi}$ on $\Psi$ by setting
$$\tau_{\Psi}(\psi_{i,n})=\psi_{i,n+1},i=1,2;\tau_{\Psi}(\varphi)=\tau_{\Phi}(\varphi),\forall
\varphi\in\Phi\setminus\{\tau_{\Phi}^{n}(\psi)\mid n\in\bbZ\}.$$ We define a linear order on $\Psi$ by keeping the order relations in
$\Phi\setminus\{\tau_{\Phi}^{n}(\psi)\mid n\in\bbZ\}$ and adding new
order relations
$$\psi_{1,n}<\psi_{2,n}<\psi_{1,n+1}\;\;(\forall\,n\in\bbZ)$$ and
$$\tau_{\Psi}^n(\varphi_1)<\psi_{1,n}<\psi_{2,n}<\tau_{\Psi}^n(\varphi_2)$$
 whenever $\varphi_1<\psi<\varphi_2$ with $\varphi_1,\varphi_2\in\Phi\setminus\{\tau_{\Phi}^{n}(\psi)\mid n\in\bbZ\}$.

Define $\Pi_{\psi_{1,0}}=\Pi_{\psi}\bigcap\mathcal{F}$,
$\Pi_{\psi_{2,0}}=\Pi_{\psi}\bigcap\mathcal{T}=\mathcal{T}$, and set
$\Pi_{\psi_{i,n}}=\Pi_{\psi_{i,0}}[n]$ for $n\in\bbZ$, $i=1,2$. We
claim that $\Pi_{\psi_{1,0}}, \Pi_{\psi_{2,0}}$ satisfy the
following conditions:
\begin{itemize}
\item[(i)] $\Hom(\Pi_{\psi_{2,0}}, \Pi_{\psi_{1,0}})=0$;
\item[(ii)] For each element $Z\in\Pi_{\psi}$, there exists a unique exact sequence $0\to Z_2\to Z\to Z_1\to 0$, where $Z_i\in\Pi_{\psi_{i,0}}$ for $i=1,2$.
\end{itemize}
In fact, since $(\mathcal{T},\mathcal{F})$ is a torsion pair, the
statement (i) follows immediately from the fact
$\Hom(\mathcal{T},\mathcal{F})=0$. For the second statement, note
that $Z$ has a unique decomposition $0\to Z_2\to Z\to Z_1\to 0$ with
$Z_2\in\mathcal{T}$ and $Z_1\in\mathcal{F}$. Thus, it suffices to
show that $Z_1\in\Pi_{\psi}$. This follows from the fact that
$\Hom(Z,\Pi_{\varphi})=0$ for any
$\varphi\in(\tau_{\Phi}^{-1}(\psi),\psi)$. Thus, $(\Psi,
\{\Pi_{\varphi}\}_{\varphi\in\Psi})$ is a t-stability on $\mathcal{C}$, which is strictly finer than
$(\Phi,\{\Pi_{\varphi}\}_{\varphi\in\Phi})$. We are done.
\end{proof}

The following result gives an equivalent description of the finest
t-stabilities.

\begin{thm}\label{iff cond for finest} A t-stability $(\Phi,\{\Pi_{\varphi}\}_{\varphi\in\Phi})$ on  $\mathcal{C}$ is finest if and only if for any $\varphi\in\Phi$ and $X,Y\in\Pi_{\varphi}$,  $\Hom(X,Y)\neq 0\neq \Hom(Y,X)$.
\end{thm}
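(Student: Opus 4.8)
The plan is to prove both implications separately, each as the contrapositive of a statement about refinements, so that essentially no new construction is required beyond the lemmas already in place. For the necessity (``only if''), I would argue directly from Lemma~\ref{hom non trivial}: if the t-stability were finest yet the Hom-condition failed, there would be some $\varphi\in\Phi$ and nonzero $X,Y\in\Pi_{\varphi}$ with, say, $\Hom(X,Y)=0$, and then Lemma~\ref{hom non trivial} produces a t-stability strictly finer than $(\Phi,\{\Pi_{\varphi}\}_{\varphi\in\Phi})$, contradicting finest-ness. Hence $\Hom(X,Y)\neq 0$ for all nonzero $X,Y\in\Pi_{\varphi}$; as this holds for every ordered pair, it also yields $\Hom(Y,X)\neq 0$. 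Thus the necessity reduces entirely to Lemma~\ref{hom non trivial}.

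For the sufficiency (``if'') I would again prove the contrapositive: assuming $(\Phi,\{\Pi_{\varphi}\}_{\varphi\in\Phi})$ is not finest, I would exhibit a pair violating the Hom-condition. Let $(\Psi,\{P_{\psi}\}_{\psi\in\Psi})$ be strictly finer, witnessed by a surjection $r:\Psi\to\Phi$ obeying (i)--(iii) of the definition. The crucial step is to rule out $r$ being injective: were $r$ a bijection, condition (ii) would force it to be an order-isomorphism, and condition (iii), together with the extension-closedness of each $P_{\psi}$, would give $\Pi_{r(\psi)}=\langle P_{\psi}\rangle=P_{\psi}$, so the two t-stabilities would coincide, contradicting strictness. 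Therefore there exist $\psi_1<\psi_2$ in $\Psi$ with $r(\psi_1)=r(\psi_2)=:\varphi$. Since every semistable subcategory is non-empty by definition, I may choose nonzero $Y\in P_{\psi_1}$ and $X\in P_{\psi_2}$; axiom (ii) for $(\Psi,\{P_{\psi}\}_{\psi\in\Psi})$ gives $\Hom(X,Y)=0$, while condition (iii) places both $X$ and $Y$ in $\Pi_{\varphi}$. This is exactly a pair in one semistable subcategory with vanishing Hom, so the Hom-condition fails.

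The main point to handle carefully is the dichotomy appearing in the sufficiency direction: a genuine strict refinement must actually split some $\Pi_{\varphi}$ into two non-trivial pieces of distinct phases, rather than merely re-index $\Phi$. This is precisely the alternative ``$r$ bijective $\Rightarrow$ trivial refinement'' versus ``$r$ non-injective $\Rightarrow$ a splitting,'' and it leans on the non-emptiness clause in the definition of a t-stability to guarantee the split pieces are nonzero. Once this is settled, the offending vanishing is read off immediately from the orthogonality axiom (ii); I note that the piecewise hereditary hypothesis on $\mathcal{C}$ plays no role in this direction, entering only through Lemma~\ref{hom non trivial} in the necessity part.
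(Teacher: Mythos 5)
Your proof is correct. For the ``only if'' direction you do exactly what the paper does: invoke Lemma~\ref{hom non trivial} (whose proof is where the piecewise hereditary hypothesis actually enters, via Lemma~\ref{strictly increasing}), so that part is identical in substance. For the ``if'' direction the paper simply cites \cite[Prop.~5.5]{GKR}, whereas you supply a self-contained elementary argument: given a strictly finer $(\Psi,\{P_{\psi}\})$ with witnessing surjection $r:\Psi\to\Phi$, you correctly observe that a bijective $r$ would be an order-isomorphism (strict monotonicity from axiom (ii) plus injectivity on a linear order) identifying the two t-stabilities via $\Pi_{r(\psi)}=\langle P_{\psi}\rangle=P_{\psi}$, so strictness forces two distinct $\psi_1<\psi_2$ in one fibre $r^{-1}(\varphi)$; non-emptiness of semistable subcategories then yields nonzero $Y\in P_{\psi_1}$, $X\in P_{\psi_2}$, both lying in $\Pi_{\varphi}$ by condition (iii), with $\Hom(X,Y)=0$ by the orthogonality axiom for the finer t-stability. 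This is a valid contrapositive of the sufficiency claim. What your route buys is independence from the external reference and a transparent identification of exactly which axioms are used; what it costs is nothing of substance, though you should state explicitly (as the paper leaves implicit) that the objects $X,Y$ in the theorem are understood to be nonzero, since otherwise the Hom-condition is vacuously violated.
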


\begin{proof} The $\lq\lq$if" part follows from \cite[Prop.~5.5]{GKR}, while the $\lq\lq$only if" part follows from Lemma \ref{hom non trivial}.
\end{proof}

\subsection{Semistable subcategories}

From now onwards, let $\mathbb{X}$ be the weighted projective line
over $k$ of weight type (2) and $\coh\mathbb{X}$ be the category of
coherent sheaves on $\mathbb{X}$. Let
$\cal{D}=D^{b}(\coh\mathbb{X})$ be the bounded derived category of
$\coh\mathbb{X}$. Similar to the case of the projective line
$\mathbb P^1$, each t-stability in $\cal{D}$ can be refined to a
finest one. In the following, we always fix a finest t-stability
$(\Phi,\{\Pi_{\varphi}\}_{\varphi\in\Phi})$ on $\cal{D}$.

The following result is an immediate consequence of Theorem \ref{iff
cond for finest}.
\begin{cor}\label{quasimple} If $S_{1,0},S_{1,1}$ are semistable, then $\varphi(S_{1,0})\neq\varphi(S_{1,1})$.
\end{cor}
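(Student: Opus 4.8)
The plan is to argue by contradiction, the whole content being packaged into the characterization of finest t-stabilities furnished by Theorem \ref{iff cond for finest}. Suppose, on the contrary, that $S_{1,0}$ and $S_{1,1}$ are both semistable and share a common phase, say $\varphi(S_{1,0})=\varphi(S_{1,1})=\varphi$. By the defining property of the HN filtration recalled above, $X\in\Pi_\varphi$ precisely when $\varphi^{-}(X)=\varphi^{+}(X)=\varphi$, so both sheaves then lie in the single semistable subcategory $\Pi_\varphi$.

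Next I would invoke the finestness hypothesis on the fixed t-stability $(\Phi,\{\Pi_{\varphi}\}_{\varphi\in\Phi})$. Since it is finest, Theorem \ref{iff cond for finest} asserts that any two objects $X,Y$ of a common $\Pi_\varphi$ satisfy $\Hom(X,Y)\neq 0\neq\Hom(Y,X)$; applying this to $X=S_{1,0}$ and $Y=S_{1,1}$ forces, in particular, $\Hom(S_{1,0},S_{1,1})\neq 0$.

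The contradiction then comes from the elementary fact that $S_{1,0}$ and $S_{1,1}$ are \emph{distinct} exceptional simple sheaves in $\coh\X$. Any nonzero morphism between simple objects of an abelian category is an isomorphism, and since $S_{1,0}\not\cong S_{1,1}$ we get $\Hom(S_{1,0},S_{1,1})=0$, contradicting the previous step. Hence $\varphi(S_{1,0})\neq\varphi(S_{1,1})$, as claimed.

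I do not anticipate any genuine obstacle here, since all the work has already been done in Theorem \ref{iff cond for finest}; the only additional ingredient is the vanishing of $\Hom$ between non-isomorphic simples. The single point worth keeping in mind is that Theorem \ref{iff cond for finest} is a statement about objects of $\mathcal{D}=D^b(\coh\X)$, whereas $S_{1,0},S_{1,1}$ are sheaves; but as they lie in the heart $\coh\X$, their $\Hom$ computed in $\mathcal{D}$ agrees with the $\Hom$ computed in $\coh\X$, so the simplicity argument yields exactly the required vanishing.
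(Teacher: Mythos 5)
Your argument is correct and is essentially identical to the paper's proof: both reduce the claim to Theorem \ref{iff cond for finest} together with the vanishing $\Hom(S_{1,0},S_{1,1})=0=\Hom(S_{1,1},S_{1,0})$, which holds because these are non-isomorphic simple sheaves. The extra remarks about computing $\Hom$ in the heart versus the derived category are fine but not needed beyond what the paper already takes for granted.
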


\begin{proof}
This follows from Theorem \ref{iff cond for finest} and
$\Hom(S_{1,0}, S_{1,1})=0=\Hom(S_{1,1}, S_{1,0})$.
\end{proof}

\begin{cor}\label{at most one semistable} At most one of $S^{(2)}_{1,0}$ or $S^{(2)}_{1,1}$ is semistable.
\end{cor}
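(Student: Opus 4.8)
The plan is to argue by contradiction: assume both $S^{(2)}_{1,0}$ and $S^{(2)}_{1,1}$ are semistable, say of phases $\varphi_0$ and $\varphi_1$, and deduce that the two simple sheaves $S_{1,0}$ and $S_{1,1}$ are forced to be semistable of \emph{one and the same} phase, which contradicts Corollary \ref{quasimple}.

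First I would pin down the morphisms inside the rank-two tube at the exceptional point. Since $S^{(2)}_{1,0}$ has top $S_{1,0}$ and socle $S_{1,1}$, while $S^{(2)}_{1,1}$ has top $S_{1,1}$ and socle $S_{1,0}$, any nonzero map $S^{(2)}_{1,0}\to S^{(2)}_{1,1}$ must factor as the canonical epimorphism onto $S_{1,0}$ followed by the inclusion of $S_{1,0}$ as the socle of $S^{(2)}_{1,1}$; hence $\Hom(S^{(2)}_{1,0},S^{(2)}_{1,1})\cong k$, and symmetrically $\Hom(S^{(2)}_{1,1},S^{(2)}_{1,0})\cong k$. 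Choosing nonzero degree-zero maps $f\colon S^{(2)}_{1,0}\to S^{(2)}_{1,1}$ and $g\colon S^{(2)}_{1,1}\to S^{(2)}_{1,0}$ between semistable objects and invoking axiom (ii) of a t-stability (which forbids nonzero $\Hom$ from a strictly higher phase to a strictly lower one), I get $\varphi_0\le\varphi_1$ from $f$ and $\varphi_1\le\varphi_0$ from $g$; thus $\varphi_0=\varphi_1=:\varphi$ and both sheaves lie in $\Pi_\varphi$.

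The heart of the argument is an analysis of $\cone(f)$. Factoring $f$ through its image $S_{1,0}$ in the hereditary category $\coh\X$ gives $\ker f\cong S_{1,1}\cong\coker f$, so $\cone(f)$ has cohomologies $S_{1,1}$ in degrees $-1$ and $0$; because $\Ext^2(S_{1,1},S_{1,1})=0$ the truncation triangle splits and $\cone(f)\cong S_{1,1}\oplus S_{1,1}[1]$. On the other hand, the defining triangle $S^{(2)}_{1,0}\xrightarrow{f}S^{(2)}_{1,1}\to\cone(f)\to S^{(2)}_{1,0}[1]$ displays $\cone(f)$ as an extension of $S^{(2)}_{1,0}[1]\in\Pi_{\tau_\Phi(\varphi)}$ by $S^{(2)}_{1,1}\in\Pi_\varphi$. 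Since the aisles $\langle\Pi_\psi\mid\psi\ge\varphi\rangle$ and $\langle\Pi_\psi\mid\psi\le\tau_\Phi(\varphi)\rangle$ of Proposition \ref{t-stability-t-structure} are extension-closed, $\cone(f)$ lands in $\Pi_{[\varphi,\tau_\Phi(\varphi)]}$, so all its HN phases lie in $[\varphi,\tau_\Phi(\varphi)]$. Reading this off from $\cone(f)\cong S_{1,1}\oplus S_{1,1}[1]$ yields $\varphi\le\varphi^-(S_{1,1})$ and $\tau_\Phi(\varphi^+(S_{1,1}))=\varphi^+(\cone f)\le\tau_\Phi(\varphi)$; as $\tau_\Phi$ is strictly increasing (Lemma \ref{strictly increasing}) the second inequality gives $\varphi^+(S_{1,1})\le\varphi$, and together with $\varphi^-(S_{1,1})\le\varphi^+(S_{1,1})$ this squeezes $\varphi^-(S_{1,1})=\varphi^+(S_{1,1})=\varphi$. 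Hence $S_{1,1}$ is semistable of phase $\varphi$, and the same computation applied to $g$ (with $\cone(g)\cong S_{1,0}\oplus S_{1,0}[1]$) shows $S_{1,0}$ is semistable of phase $\varphi$ as well. This contradicts Corollary \ref{quasimple}, completing the proof.

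The step I expect to be most delicate is the cone analysis: one has to identify $\cone(f)$ precisely (relying on heredity to split its two cohomology sheaves) and then confine it to the one-period slice $\Pi_{[\varphi,\tau_\Phi(\varphi)]}$, which is exactly what lets me bound $\varphi^-(S_{1,1})$ and $\varphi^+(S_{1,1})$ from opposite sides simultaneously. By contrast, the one-dimensionality of the Hom spaces and the equality $\varphi_0=\varphi_1$ via axiom (ii) are routine.
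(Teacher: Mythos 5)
Your argument is correct, but it takes a genuinely different route from the paper's. After the common first step (both $S^{(2)}_{1,0},S^{(2)}_{1,1}$ semistable forces equal phase $\varphi$ via the nonzero Homs in both directions), the paper simply invokes the Auslander--Reiten sequence $0\to S^{(2)}_{1,i}\to S^{(3)}_{1,i+1}\oplus S_{1,i}\to S^{(2)}_{1,i+1}\to 0$: since $\Pi_\varphi$ is extension-closed and closed under direct summands, the middle term and hence $S_{1,i}$ lies in $\Pi_\varphi$ for $i=0,1$, contradicting Corollary \ref{quasimple} in one line. You instead take the cone of a nonzero degree-zero map, identify it as $S_{1,1}\oplus S_{1,1}[1]$ via Lemma \ref{kernel and cokernel}, and trap its HN phases in the window $[\varphi,\tau_\Phi(\varphi)]$ to squeeze out $\varphi(S_{1,1})=\varphi$. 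This works: the identifications $\ker f\cong\coker f\cong S_{1,1}$ are right, the interval containment follows from extension-closedness of the two aisles of Proposition \ref{t-stability-t-structure}, and $\tau_\Phi$ being a strictly increasing bijection lets you cancel it from $\tau_\Phi(\varphi^+(S_{1,1}))\le\tau_\Phi(\varphi)$. What the paper's route buys is brevity and the avoidance of two facts you use implicitly, namely that every object of $\langle\Pi_\psi\mid\psi\le\tau_\Phi(\varphi)\rangle$ has $\varphi^+\le\tau_\Phi(\varphi)$ (and dually), and that $\varphi^\pm$ of a direct sum is the max/min of the summands' values; both are standard consequences of the uniqueness of HN filtrations but deserve a word. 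What your route buys is independence from the AR theory of the rank-two tube: you only need the composition series of $S^{(2)}_{1,0}$ and $S^{(2)}_{1,1}$, so the same cone argument would transfer to settings where the AR sequence is less readily available.
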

\begin{proof} Suppose that both of $S^{(2)}_{1,0},S^{(2)}_{1,1}$ are semistable. Since $\Hom(S^{(2)}_{1,0},S^{(2)}_{1,1})\neq 0\neq \Hom(S^{(2)}_{1,1},S^{(2)}_{1,0})$, we obtain that $\varphi(S^{(2)}_{1,0})=\varphi(S^{(2)}_{1,1})$.
Now consider the Auslander--Reiten sequence $$0\to S^{(2)}_{1,i}\to
S^{(3)}_{1,i+1}\oplus S_{1,i}\to S^{(2)}_{1,i+1}\to 0$$ for $i=0,1$.
Thus, $S_{1,0}, S_{1,1}$ are semistable and
$\varphi(S_{1,0})=\varphi(S^{(2)}_{1,0})=\varphi(S_{1,1})$,
contradicting to Corollary \ref{quasimple}.
\end{proof}

\begin{lem}\label{S and Sn} Let $S$ be the torsion sheaf $S_{\lambda}$ for some ordinary point $\lambda\in\mathbb{H}_ k$ or $S_{1,i}^{(2)}$ for $i=0,1$. Then $S\in \Pi_{\varphi}$ if and only if  $S^{(n)}\in\Pi_{\varphi}$ for any $n$.
\end{lem}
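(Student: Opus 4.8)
The plan is to dispose of the reverse implication immediately and to concentrate all the work on the forward implication, where the single engine will be the extension-closedness of the semistable subcategory $\Pi_\varphi$ that is built into the definition of a t-stability. For the reverse implication, that $S^{(n)}\in\Pi_\varphi$ for every $n$ forces $S\in\Pi_\varphi$: since $S^{(1)}=S$ and $n=1$ is admissible, the hypothesis already contains $S\in\Pi_\varphi$, and nothing further is required.

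For the forward implication I would argue by induction on $n$, the base case $n=1$ being exactly $S=S^{(1)}\in\Pi_\varphi$. The key input is a short exact sequence in $\coh\X$ of the form
$$0\longrightarrow S^{(n-1)}\longrightarrow S^{(n)}\longrightarrow S\longrightarrow 0,$$
exhibiting $S^{(n)}$ as an extension of $S$ by $S^{(n-1)}$. When $S=S_\lambda$ sits at an ordinary point $\lambda\in\mathbb{H}_k$, this is the standard filtration of the length-$n$ uniserial object in the homogeneous tube, with top $S_\lambda$ and submodule $S_\lambda^{(n-1)}$. When $S=S_{1,i}^{(2)}$, one reads $S^{(n)}$ as the quasi-length-$2n$ uniserial object of the rank-two tube at $\lambda_1$ with top $S_{1,i}$; its top two composition factors furnish the quotient $S_{1,i}^{(2)}=S$, and because the tube has rank $2$ the complementary submodule of quasi-length $2(n-1)$ again has top $S_{1,i}$, so it equals $S^{(n-1)}$. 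Viewed in $\mathcal{D}=D^b(\coh\X)$, this sequence becomes a distinguished triangle $S^{(n-1)}\to S^{(n)}\to S\to S^{(n-1)}[1]$; since $S^{(n-1)}\in\Pi_\varphi$ by the induction hypothesis, $S\in\Pi_\varphi$ by assumption, and $\Pi_\varphi$ is extension-closed, I conclude $S^{(n)}\in\Pi_\varphi$, completing the induction.

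Structurally the argument is immediate once the right sequence is in hand, so the only point requiring care --- the step I would treat as the main obstacle --- is the tube bookkeeping in the exceptional case $S=S_{1,i}^{(2)}$: one must fix the correct meaning of $S^{(n)}$ and verify, using the period-two behaviour of the Auslander--Reiten translation on the rank-two tube, that the sub- and quotient terms of the displayed sequence genuinely are $S^{(n-1)}$ and $S$. The homogeneous case $S=S_\lambda$ is entirely routine. Note that no phase inequalities or Harder--Narasimhan comparisons enter here: membership in $\Pi_\varphi$ follows directly from extension-closedness, not from any control of $\varphi^{\pm}$.
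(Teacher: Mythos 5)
Your forward implication is fine and is essentially the paper's argument: the paper simply notes $S^{(n)}\in\langle S\rangle$ and invokes extension-closedness, which is your induction written in one line. The problem is the reverse implication. You read ``$S^{(n)}\in\Pi_{\varphi}$ for any $n$'' as a universally quantified hypothesis and dispose of it by taking $n=1$. But the content the paper actually proves --- and the content it needs later --- is the descent statement: if $S^{(n)}\in\Pi_{\varphi}$ for a \emph{single} $n$, then already $S\in\Pi_{\varphi}$. This is exactly how the lemma is used in the proof of Theorem \ref{semistable subcategory}: to show that no $S^{(n)}$ with $n\geq 2$ can be a minimal object of a semistable subcategory, one must start from the semistability of $S^{(n)}$ alone and deduce that of $S$. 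Your version of the lemma is too weak to support that application, so this is a genuine gap rather than a stylistic difference.

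The missing idea is the paper's exact sequence
$$0\longrightarrow S^{(n)}\longrightarrow S\oplus S^{(2n-1)}\longrightarrow S^{(n)}\longrightarrow 0$$
in $\coh\X$ (a rectangle relation in the relevant tube, available for $S=S_{\lambda}$ and, after the length bookkeeping you describe, for $S=S_{1,i}^{(2)}$ as well). Given $S^{(n)}\in\Pi_{\varphi}$ for one $n$, the middle term lies in $\Pi_{\varphi}$ because $\Pi_{\varphi}$ is extension-closed, and then $S\in\Pi_{\varphi}$ because $\Pi_{\varphi}$ is also closed under \emph{direct summands} --- a property of semistable subcategories that your proof never uses and that is indispensable here, since extension-closedness alone only builds objects up from $S$ and gives no way to descend from $S^{(n)}$ back to $S$. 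You should restate the lemma with ``for some $n$'' in the appropriate direction and supply this argument.
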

\begin{proof} Assume that $S\in \Pi_{\varphi}$, then by $S^{(n)}\in \langle S\rangle$ we know that $S^{(n)}\in\Pi_{\varphi}$.
Now assume $S^{(n)}\in\Pi_{\varphi}$ for some $n$. Consider the
exact sequence $0\to S^{(n)}\to S\oplus S^{(2n-1)}\to S^{(n)}\to 0$.
Since $\Pi_{\varphi}$ is closed under extensions and direct
summands, we conclude that $S\in \Pi_{\varphi}$.
\end{proof}

\begin{lem}\label{not semistable} For any $n\in\mathbb{N}$ and $i=0,1$, $S^{(2n+1)}_{1,i}$ is not semistable.
\end{lem}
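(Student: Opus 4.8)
The plan is to argue by contradiction: assume $S:=S_{1,i}^{(2n+1)}\in\Pi_\varphi$ for some $\varphi\in\Phi$ (here $2n+1\ge 3$). Since $2n+1$ is odd, $S_{1,i}$ is simultaneously the socle and the top of $S$, so I would first extract the two short exact sequences of torsion sheaves $0\to S_{1,i}\to S\to S_{1,i}^{(2n)}\to 0$ and $0\to S_{1,i+1}^{(2n)}\to S\to S_{1,i}\to 0$. Applying the phase monotonicity of Lemma~\ref{similar result as stab condition} to these, together with $\varphi^{-}(S)=\varphi^{+}(S)=\varphi$, yields $\varphi^{+}(S_{1,i})\le\varphi\le\varphi^{-}(S_{1,i})$; as $\varphi^{-}\le\varphi^{+}$ always, this forces $S_{1,i}\in\Pi_\varphi$. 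The same sequences also record the bounds $\varphi\le\varphi^{-}(S_{1,i}^{(2n)})$ and $\varphi^{+}(S_{1,i+1}^{(2n)})\le\varphi$, which I keep for later.

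Next I would rule out semistability of both $S_{1,i}^{(2)}$ and $S_{1,i+1}^{(2)}$. If $S_{1,i}^{(2)}$ were semistable of phase $\psi$, then Lemma~\ref{S and Sn} makes $S_{1,i}^{(2n)}$ semistable of the same phase, so $\psi=\varphi^{-}(S_{1,i}^{(2n)})\ge\varphi$; but $S_{1,i}\in\Pi_\varphi$ is the top quotient of $S_{1,i}^{(2)}$, so Lemma~\ref{similar result as stab condition} gives $\psi\le\varphi$. Hence $\psi=\varphi$ and $S_{1,i}^{(2)}\in\Pi_\varphi$; this contradicts Theorem~\ref{iff cond for finest}, because the socle of $S_{1,i}^{(2)}$ is $S_{1,i+1}\ne S_{1,i}$ gives $\Hom(S_{1,i},S_{1,i}^{(2)})=0$. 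Symmetrically, if $S_{1,i+1}^{(2)}$ were semistable, using the socle $S_{1,i}\in\Pi_\varphi$ and $\varphi^{+}(S_{1,i+1}^{(2n)})\le\varphi$ I would again get $S_{1,i+1}^{(2)}\in\Pi_\varphi$, contradicting $\Hom(S_{1,i+1}^{(2)},S_{1,i})=0$ (the top of $S_{1,i+1}^{(2)}$ is $S_{1,i+1}\ne S_{1,i}$).

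Finally, with both $S_{1,i}^{(2)}$ and $S_{1,i+1}^{(2)}$ unstable, each has a genuine two-step HN filtration whose factors are exactly its socle and its top. For $S_{1,i}^{(2)}$ the lower-phase factor is the top $S_{1,i}\in\Pi_\varphi$, so $\varphi^{-}(S_{1,i}^{(2)})=\varphi$ and the higher-phase factor $S_{1,i+1}$ is semistable with $\varphi(S_{1,i+1})=\varphi^{+}(S_{1,i}^{(2)})>\varphi$. For $S_{1,i+1}^{(2)}$ the higher-phase factor is the socle $S_{1,i}\in\Pi_\varphi$, so $\varphi^{+}(S_{1,i+1}^{(2)})=\varphi$ and the lower-phase factor $S_{1,i+1}$ satisfies $\varphi(S_{1,i+1})=\varphi^{-}(S_{1,i+1}^{(2)})<\varphi$. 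These two values of $\varphi(S_{1,i+1})$ are incompatible, so $S$ cannot be semistable.

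\textbf{Expected main obstacle.} The only delicate point is that every step above takes place inside a single abelian heart: to invoke Lemma~\ref{similar result as stab condition} on the sequences of sheaves, and to read the HN factors of the length-two sheaves $S_{1,i}^{(2)}$, $S_{1,i+1}^{(2)}$ off as their honest socle and top, I must know that the whole tube $\langle S_{1,0},S_{1,1}\rangle$ lies in one heart $\Pi_{[\varphi_0,\tau_\Phi(\varphi_0))}$ of the finest t-stability (so that subobjects are subsheaves and phases stay within one period). I would secure this first; granting it, all the sequences above are exact in that heart, the phase estimates apply verbatim, and the HN factors of a length-two sheaf are precisely its simple sub and quotient.
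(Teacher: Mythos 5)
Your argument is built on Lemma~\ref{similar result as stab condition}, and that is where it breaks. That lemma carries the hypothesis that all three terms of the triangle lie in a single window $\Pi_{[\varphi_0,\tau_{\Phi}(\varphi_0)]}$; you flag this as ``the only delicate point'' and defer it, but it is not a technicality you can secure afterwards --- it is the whole difficulty. For a general finest t-stability there is no reason the tube $\langle S_{1,0},S_{1,1}\rangle$ sits inside one window: the paper's own Propositions~\ref{hn for s1i} and \ref{hn for s} show that the HN factors of $S_{1,i}$ and $S_{1,i}^{(2)}$ may be shifted line bundles $\co(\vec{x})[1]$ or shifted simples $S_{1,i+1}[1]$, so the phases occurring in the tube can span more than one period. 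Proving the containment under the sole assumption that $S_{1,i}^{(2n+1)}$ is semistable would amount to re-proving the lemma. A second, independent gap is in your final step: you assert that each non-semistable $S_{1,j}^{(2)}$ has a two-step HN filtration whose factors are exactly its socle and top. Nothing you have established forces this --- it presupposes that $S_{1,i+1}$ is semistable (you never show this) and it excludes the final HN triangles with line-bundle factors listed in Proposition~\ref{hn for s}. Without that, the two incompatible values of $\varphi(S_{1,i+1})$ that furnish your contradiction are not available.

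For contrast, the paper avoids all phase inequalities. From the self-extension $0\to S_{1,i}^{(2n+1)}\to S_{1,i}^{(2n-1)}\oplus S_{1,i}^{(2n+3)}\to S_{1,i}^{(2n+1)}\to 0$ and the fact that $\Pi_{\psi}$ is closed under extensions \emph{and direct summands}, semistability of one odd-length module propagates to all odd lengths at once (in particular to $S_{1,i}$ itself and to arbitrarily long modules), whence both $S_{1,i}^{(2)}$ and $S_{1,i+1}^{(2)}$ become semistable of the same phase and one contradicts Corollary~\ref{quasimple}. That route uses only the defining closure properties of the semistable subcategories and needs no window hypothesis. I would recommend either adopting that mechanism or, at minimum, replacing your uses of Lemma~\ref{similar result as stab condition} by arguments that do not require the one-period containment.
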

\begin{proof}

Suppose there exist $n\in\mathbb{N}$, $i=0$ or
1, such that $S^{(2n+1)}_{1,i}\in\Pi_{\psi}$ for some $\psi\in\Phi$.
From the exact sequence $0\to S^{(2n+1)}_{1,i}\to
S^{(2n-1)}_{1,i}\oplus S^{(2n+3)}_{1,i}\to S^{(2n+1)}_{1,i}\to 0$
and by induction, we get $S_{1,i}^{(2m-1)}\in\Pi_{\psi}$ for any
$m\in\mathbb{N}$. It follows that $S^{(2)}_{1,i}$ and
$S^{(2)}_{1,i+1}$ are both semistable. Note that there are two
sequences of non-zero morphism $S^{(3)}_{1,i+1}\to
S^{(2)}_{1,i+1}\to S_{1,i+1}$ and $S_{1,i+1}\to S^{(2)}_{1,i}\to
S^{(3)}_{1,i+1}$. Hence
$\varphi(S^{(2)}_{1,i})=\varphi(S^{(2)}_{1,i+1})=\psi$. Now from the
exact sequence $0\to S^{(2)}_{1,i+1}\to S_{1,i+1}\oplus
S^{(3)}_{1,i}\to S^{(2)}_{1,i}\to 0$ we obtain that
$\varphi(S_{1,i+1})=\psi=\varphi(S_{1,i})$, a contradiction to Lemma
\ref{quasimple}.

\end{proof}

The following is a characterization for semistable subcategories for
$\cal{D}$.

\begin{thm}\label{semistable subcategory} Each semistable subcategory of $\cal{D}$ has the form $\langle E[j]\rangle$, where $j\in\mathbb{Z}$ and $E$ is a coherent sheaf satisfying that $\End(E)$ is a division algebra.
\end{thm}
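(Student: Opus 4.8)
The plan is to show that, for our finest t-stability, each semistable subcategory $\Pi_\varphi$ is an abelian length category possessing a \emph{unique} simple (that is, stable) object $E$, and then to read off the statement from the hereditary structure of $\mathcal{D}$. Once we know $\Pi_\varphi=\langle E\rangle$ with $E$ simple, Schur's lemma forces $\End(E)$ to be a division algebra, and since $\mathcal{D}=D^b(\coh\X)$ is hereditary the indecomposable object $E$ must be of the form $F[j]$ for an indecomposable sheaf $F$ with $\End(F)\cong\End(E)$; this gives exactly $\langle F[j]\rangle$ with $\End(F)$ a division algebra.

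First I would record that $\Pi_\varphi$ is abelian. By Lemma \ref{strictly increasing} the shift $\tau_\Phi$ is strictly increasing, so $\tau_\Phi^{-1}(\varphi)<\varphi$ and, by Proposition \ref{t-stability-t-structure}, the half-open interval $(\tau_\Phi^{-1}(\varphi),\varphi]$ yields an abelian heart $\mathcal{H}_\varphi=\Pi_{(\tau_\Phi^{-1}(\varphi),\varphi]}$ in which $\varphi$ is the top phase. The subcategory $\Pi_\varphi$ consists of those $X\in\mathcal{H}_\varphi$ with $\varphi^{+}(X)=\varphi^{-}(X)=\varphi$; applying the seesaw inequalities of Lemma \ref{similar result as stab condition} to a short exact sequence $0\to A\to X\to B\to 0$ in $\mathcal{H}_\varphi$ and to its rotation shows that this class is closed under subobjects and quotients inside $\mathcal{H}_\varphi$. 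Being closed under extensions by definition, $\Pi_\varphi$ is therefore abelian. Since $\mathcal{D}$ has finite-dimensional morphism spaces, every object of $\Pi_\varphi$ has finite length in $\Pi_\varphi$; in particular every nonzero object contains a simple, i.e.\ a stable, object.

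Next I would pin down the simple object. Any simple $E\in\Pi_\varphi$ is indecomposable, so $\End(E)$ is local, and simplicity gives Schur's lemma: every nonzero endomorphism of $E$ is invertible, whence $\End(E)$ is a division algebra. If $E'$ is another simple object of $\Pi_\varphi$, then Theorem \ref{iff cond for finest} yields $\Hom(E,E')\neq0\neq\Hom(E',E)$; but a nonzero morphism between simple objects is an isomorphism, so $E'\cong E$. Thus $\Pi_\varphi$ has a unique simple object $E$, every object of $\Pi_\varphi$ has all its Jordan--H\"older factors isomorphic to $E$, and therefore $\Pi_\varphi=\langle E\rangle$. Finally, as $\coh\X$ is hereditary, the indecomposable object $E$ of $D^b(\coh\X)$ is isomorphic to $F[j]$ for some $j\in\bbZ$ and some indecomposable sheaf $F$, with $\End(F)\cong\End(E)$ a division algebra; hence $\Pi_\varphi=\langle F[j]\rangle$ as required.

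The step I expect to be the main obstacle is the abelianness and finite-length claim for $\Pi_\varphi$, precisely because a semistable subcategory is in general only extension-closed and need not be abelian. The point that rescues the argument is that we work with a \emph{finest} t-stability, so $\tau_\Phi$ has no fixed points (Lemma \ref{strictly increasing}) and $\Pi_\varphi$ embeds as the top-phase part of a genuine abelian heart. As an independent check one can run the conclusion by cases: the both-way nonvanishing of $\Hom$ from Theorem \ref{iff cond for finest} forces $\Pi_\varphi$ neither to mix vector bundles with torsion sheaves nor to mix distinct tubes, and within a single tube Lemmas \ref{S and Sn}, \ref{not semistable} and Corollaries \ref{quasimple}, \ref{at most one semistable} identify the generator $E$ as one of $S_\lambda$, $S_{1,i}$ or $S^{(2)}_{1,i}$, each of which is a brick. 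This matches the general argument and explains why $\End(E)$ is automatically a division algebra in every case.
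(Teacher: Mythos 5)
Your overall strategy --- produce a unique simple object in each $\Pi_\varphi$ and invoke Schur's lemma plus hereditarity --- is not the paper's, and it has a genuine gap at precisely the step you flag as the main obstacle. The claim that $\Pi_\varphi$ is an abelian length category does not follow from the tools you cite. Lemma \ref{similar result as stab condition} applied to $0\to A\to X\to B\to 0$ gives $\varphi^{+}(A)\leq\varphi^{+}(X)$ and $\varphi^{-}(X)\leq\varphi^{-}(B)$; this shows $\Pi_\varphi$ is closed under quotients in the heart $\Pi_{(\tau_\Phi^{-1}(\varphi),\varphi]}$, but it gives no lower bound on $\varphi^{-}(A)$, which is what closure under subobjects requires. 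The ``rotation'' $B[-1]\to A\to X$ does not rescue this: its three terms no longer lie in a single interval $[\varphi_0,\tau_\Phi(\varphi_0)]$, so the lemma does not apply, and even formally it would produce $\varphi^{-}(A)\leq\varphi^{-}(X)$, the inequality in the wrong direction. This is exactly why the paper warns, right after the definition, that each $\Pi_\varphi$ is extension-closed ``but, in general, not abelian'': for Bridgeland stability conditions closure under subobjects is forced by the central charge (the phases of $Z(A)$ and $Z(B)$ get pinned to the boundary ray), and a bare t-stability has no central charge. The finite-length assertion is also unjustified --- finite-dimensional Hom-spaces do not imply finite length ($\coh\mathbb{P}^1$ already fails this) --- so the existence of a simple object in $\Pi_\varphi$ is not established. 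With these two points missing, the Schur-lemma identification of a unique simple and the equality $\Pi_\varphi=\langle E\rangle$ have no foundation.

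The paper's proof avoids abelianness entirely. It first classifies the possible stable (i.e.\ minimal) objects of a semistable subcategory: every indecomposable of $\mathcal{D}$ is a shifted indecomposable sheaf, and Lemmas \ref{S and Sn} and \ref{not semistable} reduce the candidates to shifts of line bundles, simple sheaves, and $S_{1,i}^{(2)}$ --- exactly the sheaves whose endomorphism rings are division algebras. It then observes that for any two non-isomorphic objects on this list (and any shifts) at least one of the two Hom-spaces vanishes, so Theorem \ref{iff cond for finest} forbids them from sharing a phase; hence each $\Pi_\varphi$ is generated by a single such object. Your closing ``independent check'' paragraph is essentially this argument; promoted to the main line of reasoning it would yield a correct proof, but the main argument as written does not.
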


\begin{proof} Combining Lemma \ref{S and Sn} with Lemma \ref{not semistable}, the possible simple objects (up to shift) for each semistable category are line bundles or simple sheaves or $S_{1,i}^{(2)}$ for $i=0,1$. We observe that these are all the sheaves in $\coh\X$ whose endomorphism algebras are division algebras. Note that for any two such sheaves $X,Y$ and any integers $m,n$, we have $\Hom(X[m],Y[n])=0$ or $\Hom(Y[n],X[m])=0$. Then by Theorem \ref{iff cond for finest}, $X[m]$ and $Y[n]$ have different phases. Hence, each semistable subcategory contains a unique simple object. We are done.
\end{proof}

\subsection{Final HN triangles}

For any object $X\in D^{b}(\coh\mathbb{X})$, we use $\Delta_{X}$ to
denote the \emph{final HN triangle} $E_{n-1}\to X\to A_n$ in the HN
filtration (\ref{HN-filt}) and call $A_n$ the \emph{final HN factor}
of $X$. In this subsection, we will investigate the possible forms
of $\Delta_{X}$ for indecomposable $X$. If $X$ itself is semistable,
then $\Delta_{X}$ has trivial form. For this reason, we always
assume that $X$ is not semistable and $\Delta_{X}$ has non-trivial
form in the following.

\begin{lem}\label{no-existence} Let $Y\rightarrow X\rightarrow Z$ be the final HN
triangle of $X$. Then there does not exist a semistable object $W$
satisfying $\Hom(Y,W)\neq 0$ and $\varphi(W)<\varphi(Z)$.
\end{lem}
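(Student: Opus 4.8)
The plan is to argue by contradiction. Suppose a semistable object $W$ exists with $\Hom(Y,W)\neq 0$ and $\varphi(W)<\varphi(Z)$. Writing the final HN triangle as $Y\to X\to Z$ with $Y=E_{n-1}$ and $Z=A_n$ in the HN filtration (\ref{HN-filt}), my goal is to derive $\Hom(Y,W)=0$, contradicting the assumption. The whole argument reduces to a phase comparison together with a d\'evissage along the filtration of $Y$.

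First I would record the relevant phase inequalities. By definition of the HN filtration the factors $A_1,\dots,A_n$ satisfy $\varphi_1>\varphi_2>\cdots>\varphi_n$, and $Z=A_n$ sits in phase $\varphi(Z)=\varphi_n$. The subobject $Y=E_{n-1}$ is assembled from the factors $A_1,\dots,A_{n-1}$, whose phases all satisfy $\varphi_j\geq\varphi_{n-1}>\varphi_n=\varphi(Z)$. Combining this with the hypothesis $\varphi(Z)>\varphi(W)$ gives $\varphi_j>\varphi(W)$ for every $1\leq j\leq n-1$. Since each $A_j\in\Pi_{\varphi_j}$ and $W\in\Pi_{\varphi(W)}$ with $\varphi_j>\varphi(W)$, axiom (ii) in the definition of a t-stability yields $\Hom(A_j,W)=0$ for all such $j$.

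Next I would run the d\'evissage. Applying the cohomological functor $\Hom(-,W)$ to each filtration triangle $E_{j-1}\to E_j\to A_j$ produces an exact segment $\Hom(A_j,W)\to\Hom(E_j,W)\to\Hom(E_{j-1},W)$. Starting from $E_0=0$ and feeding in $\Hom(A_j,W)=0$ from the previous step, an induction on $j$ shows $\Hom(E_j,W)=0$ for all $j$, and in particular $\Hom(Y,W)=\Hom(E_{n-1},W)=0$. This contradicts $\Hom(Y,W)\neq 0$, completing the proof.

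I do not expect a genuine obstacle here; the only point requiring care is the strict inequality $\varphi_{n-1}>\varphi(Z)$ for the lowest-phase factor of $Y$, which is precisely what guarantees that every factor of $Y$ lies strictly above $\varphi(W)$ and is therefore annihilated by $\Hom(-,W)$ on the nose. Once this is in place, the vanishing propagates through the filtration purely formally, using nothing beyond the degree-zero part of axiom (ii).
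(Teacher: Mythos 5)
Your proof is correct and is essentially the paper's argument: the paper simply invokes the standard fact that $\Hom(Y,W)\neq 0$ forces $\varphi^{-}(Y)\leq\varphi(W)$, which contradicts $\varphi^{-}(Y)=\varphi_{n-1}>\varphi_n=\varphi(Z)>\varphi(W)$, while you spell out the d\'evissage behind that fact. No gaps.
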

\begin{proof} Suppose there exists such a semistable object $W$.
Then $\varphi^{-}(Y)\leq\varphi(W)<\varphi(Z)$,
which is a contradiction.
\end{proof}

\begin{lem}\label{HN-factor} Assume $Z_1,Z_2$ are semistable with $\varphi(Z_1)<\varphi(Z_2)$. If $\Hom(X,Z_i)\neq 0$, for $i=1,2$, then $Z_2$ is not the HN factor of $X$.
\end{lem}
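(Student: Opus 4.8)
The plan is to argue by contradiction and reduce everything to Lemma~\ref{no-existence}. Suppose $Z_2$ were the final HN factor of $X$, and write the corresponding final HN triangle as $Y\to X\to Z_2$. Then $Z_2$ is semistable of phase $\varphi(Z_2)=\varphi^{-}(X)$, so in the notation of Lemma~\ref{no-existence} one has $Z=Z_2$.

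The crux is to promote the given nonzero map in $\Hom(X,Z_1)$ to a nonzero map out of $Y$. I would apply the contravariant functor $\Hom(-,Z_1)$ to the distinguished triangle $Y\to X\to Z_2\to Y[1]$, obtaining the exact sequence
$$\Hom(Z_2,Z_1)\to\Hom(X,Z_1)\to\Hom(Y,Z_1).$$
Since $Z_1,Z_2$ are semistable with $\varphi(Z_2)>\varphi(Z_1)$, condition (ii) in the definition of a t-stability forces $\Hom(Z_2,Z_1)=0$, so the map $\Hom(X,Z_1)\to\Hom(Y,Z_1)$ is injective. As $\Hom(X,Z_1)\neq 0$ by hypothesis, this yields $\Hom(Y,Z_1)\neq 0$.

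To finish, I would observe that $Z_1$ is then a semistable object with $\Hom(Y,Z_1)\neq 0$ and $\varphi(Z_1)<\varphi(Z_2)=\varphi(Z)$, which is exactly the configuration forbidden by Lemma~\ref{no-existence}. This contradiction shows that $Z_2$ is not the final HN factor of $X$.

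The argument is short once the reduction is in place; the only delicate points are bookkeeping ones. I must verify that in the long exact sequence the term preceding $\Hom(X,Z_1)$ is indeed $\Hom(Z_2,Z_1)$, so that its vanishing yields injectivity of the restriction map to $Y$ rather than an inclusion of the wrong object, and that $Z_2$ being the \emph{final} HN factor genuinely forces $\varphi(Z_2)=\varphi^{-}(X)$, which is precisely what allows Lemma~\ref{no-existence} to apply with $Z=Z_2$. I do not anticipate any substantial obstacle beyond these verifications.
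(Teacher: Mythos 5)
Your argument is correct and is essentially the paper's own proof: both apply $\Hom(-,Z_1)$ to the final HN triangle, use $\Hom(Z_2,Z_1)=0$ and $\varphi^{-}(Y)>\varphi(Z_2)>\varphi(Z_1)$ to force $\Hom(Y,Z_1)=0$, and derive a contradiction with $\Hom(X,Z_1)\neq 0$ (the paper concludes directly rather than citing Lemma~\ref{no-existence}, but that lemma is exactly the same observation). The only cosmetic difference is that the paper writes the final factor as $Z_2^{k}$ for some $k\in\mathbb{N}$; your argument goes through verbatim in that case since $\Hom(Z_2^{k},Z_1)=0$ as well.
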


\begin{proof} Suppose that $\Delta_X$ has the form $Y\to  X\to Z_2^k$ for some $k\in\bbn$. It follows that
$\varphi^{-}(Y)>\varphi(Z_2)>\varphi(Z_1)$. Hence,
$\Hom(Y,Z_1)=0=\Hom(Z_2,Z_1)$, which implies that $\Hom(X,Z_1)=0$, a
contradiction.
\end{proof}

The following result is very useful in a triangulated category, but
we could not find a proof in the literature. We include the proof
suggested by Nan Gao.

\begin{lem}\label{kernel and cokernel}
Let $f:X\to Y$ be a morphism in a hereditary abelian category
$\mathcal{H}$. Then $f$ fits into the following triangle in the
bounded derived category $D^b(\mathcal{H})$:
$$X\stackrel{f}{\longrightarrow}Y\longrightarrow\coker(f)\oplus\ker(f)[1].$$
\end{lem}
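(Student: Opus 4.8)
The plan is to factor $f$ through its image and translate the resulting short exact sequences into triangles, then apply the octahedral axiom and exploit the vanishing of second extensions in a hereditary category.

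First I would set $K=\ker(f)$, $I=\mathrm{im}(f)$ and $C=\coker(f)$, and record the two canonical short exact sequences $0\to K\to X\xrightarrow{p}I\to 0$ and $0\to I\xrightarrow{\iota}Y\to C\to 0$ arising from the image factorisation $f=\iota p$. Regarded as distinguished triangles in $D^b(\mathcal{H})$, their rotations give $\cone(p)\cong K[1]$ and $\cone(\iota)\cong C$.

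Next I would apply the octahedral axiom to the composite $X\xrightarrow{p}I\xrightarrow{\iota}Y$ with $f=\iota p$, which yields a distinguished triangle
$$\cone(p)\longrightarrow\cone(f)\longrightarrow\cone(\iota)\xrightarrow{\ \delta\ }\cone(p)[1];$$
substituting the identifications above, this reads $K[1]\to\cone(f)\to C\xrightarrow{\delta}K[2]$, where the connecting morphism satisfies $\delta\in\Hom_{\mathcal{C}}(C,K[2])=\Ext^2(C,K)$.

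The crucial point — and the only place the hypothesis enters — is that $\mathcal{H}$ is hereditary, so $\Ext^2(C,K)=0$ and hence $\delta=0$. A distinguished triangle with vanishing connecting morphism splits, giving $\cone(f)\cong C\oplus K[1]=\coker(f)\oplus\ker(f)[1]$; rotating the defining triangle $X\xrightarrow{f}Y\to\cone(f)\to X[1]$ then produces the asserted triangle. I expect the main (though minor) obstacle to be bookkeeping the octahedral axiom so that the connecting map is correctly recognised as an element of the degree-two extension group; once this is in place the conclusion is formal. Alternatively one could invoke the general decomposition of any object of $D^b(\mathcal{H})$ into its shifted cohomology objects, applied to $\cone(f)$ whose only nonzero cohomologies are $H^{-1}=\ker(f)$ and $H^0=\coker(f)$, but that statement itself rests on the same $\Ext^2$-vanishing, so the octahedral argument is more self-contained.
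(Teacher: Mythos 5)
Your argument is correct and is essentially identical to the paper's proof: both factor $f$ through its image, apply the octahedral axiom to the composite $f=\iota p$ to produce the triangle $\ker(f)[1]\to\cone(f)\to\coker(f)\to\ker(f)[2]$, and use $\Hom(\coker(f),\ker(f)[2])=\Ext^2(\coker(f),\ker(f))=0$ from heredity to split it.
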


\begin{proof} Consider the following commutative diagram in $\mathcal{H}$:
$$\xymatrix@C=0.5cm{
 0 \ar[rr] && \ker(f) \ar[rr] && X \ar[rr]^{f} \ar[rd]^{\pi} && Y  \ar[rr]&& \coker(f)  \ar[rr] &&0. \\
 && && & \rm{Im}(f) \ar[ru]^{i} }$$
 By the Octahedral Axiom, we obtain the following commutative diagram of triangles:
 $$\xymatrix@C=0.5cm{
 X \ar[r]^{\pi}\ar@{=}[d] & \rm{Im}(f) \ar[r]\ar[d]^{i} & \ker(f)[1]\ar[d] \\
 X \ar[r]^{f} & Y \ar[r]\ar[d] & \rm{cone}(f).\ar[d] \\
& \coker(f) \ar@{=}[r]  & \coker(f)
 }$$
 Since $\mathcal{H}$ is hereditary, we obtain that $\Hom(\coker(f),\ker(f)[2])=0$. It follows that $\rm{cone}(f)\cong \coker(f)\oplus\ker(f)[1]$. We are done.
\end{proof}

\begin{lem}\label{not x}
Let $X$ be an indecomposable coherent sheaf with $\Delta_X$ of the
form $Y\rightarrow X\xrightarrow{f} Z$. Then $Z\notin\langle
S_{\lambda}\rangle$ for any $\lambda\in\mathbb{H}_ k$.
\end{lem}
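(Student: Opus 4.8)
The plan is to argue by contradiction: I would assume $Z\in\langle S_\lambda\rangle$ for some $\lambda\in\mathbb{H}_k$ and show that the final HN triangle splits, forcing $X$ to decompose. First I record the structural facts about the ordinary point $\lambda$ that drive the argument: $\langle S_\lambda\rangle$ is the homogeneous tube of all torsion sheaves concentrated at $\lambda$; it is a Serre subcategory of $\coh\X$ (closed under subobjects and quotients) whose objects are the direct sums of the uniserials $S_\lambda^{(m)}$. Since $X$ is not semistable, $\Delta_X$ is non-trivial, so $Y\neq 0\neq Z$ and moreover $f\neq 0$ (otherwise the triangle would split and $X\cong Y\oplus Z$ would contradict indecomposability). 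As $Z$ is the final HN factor it is semistable, say $Z\in\Pi_{\varphi(Z)}$. Picking a non-zero summand $S_\lambda^{(m)}$ of $Z$, which is then semistable of phase $\varphi(Z)$, and applying Lemma \ref{S and Sn} shows that the whole tube lies in $\Pi_{\varphi(Z)}$; that is, $\langle S_\lambda\rangle$ is semistable of the single phase $\varphi(Z)$.

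Next I would promote the abstract triangle to an honest short exact sequence by showing $f$ is surjective. Since $X,Z\in\coh\X$, Lemma \ref{kernel and cokernel} applies to $f$ and gives $Y\cong\ker(f)\oplus\coker(f)[-1]$. Now $\coker(f)$ is a quotient of $Z$, hence again lies in $\langle S_\lambda\rangle\subseteq\Pi_{\varphi(Z)}$; if it were non-zero, then $\coker(f)[-1]$ would be a semistable direct summand of $Y$ of phase $\tau_\Phi^{-1}(\varphi(Z))<\varphi(Z)$, the inequality being strict by Lemma \ref{strictly increasing}. This contradicts the fact that every HN factor of $Y=E_{n-1}$ has phase strictly greater than $\varphi(Z)=\varphi^-(X)$, so that $\varphi^-(Y)>\varphi(Z)$. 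Hence $\coker(f)=0$, the map $f$ is surjective, $Y\cong\ker(f)$ is a genuine coherent sheaf, and we obtain a short exact sequence $0\to Y\to X\to Z\to 0$ in $\coh\X$.

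Finally I would kill the extension class of this sequence. Because all HN factors of $Y$ have phase $>\varphi(Z)$, axiom (ii) of a t-stability yields $\Hom(Y,Z)=0$ by dévissage along the HN filtration of $Y$. The decisive point, and the only place where the ordinariness of $\lambda$ is used, is that $S_\lambda(\vec{\omega})\cong S_\lambda$ for $\lambda\in\mathbb{H}_k$ (the AR-translation fixes the homogeneous tube), whence $Z(\vec{\omega})\cong Z$. Serre duality \eqref{serre duality} then gives
$$\Ext^1(Z,Y)\cong D\Hom(Y,Z(\vec{\omega}))\cong D\Hom(Y,Z)=0,$$
so the sequence $0\to Y\to X\to Z\to 0$ splits and $X\cong Y\oplus Z$ with $Y,Z\neq 0$, contradicting the indecomposability of $X$.

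The main obstacle I anticipate is the first delicate step, namely converting the abstract final HN triangle into a genuine short exact sequence of sheaves: this is precisely where Lemma \ref{kernel and cokernel} together with the phase bookkeeping that forces $\coker(f)=0$ is needed. The conceptual heart of the proof, by contrast, is the isomorphism $S_\lambda(\vec{\omega})\cong S_\lambda$, which is what separates the ordinary (self-extending) tubes from the exceptional tubes and line bundles and makes $\Ext^1(Z,Y)$ vanish; everything else is routine phase bookkeeping.
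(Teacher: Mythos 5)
Your proof is correct, but it takes a genuinely different route from the paper's. The paper argues by cases on the indecomposable sheaf $X$: if $X$ is a line bundle (which covers all indecomposable bundles for weight type $(2)$), then $\ker(f)$ is again a line bundle, and since every line bundle maps nontrivially to any nonzero object of $\langle S_{\lambda}\rangle$, the direct summand $\ker(f)$ of $Y$ violates $\Hom(Y,Z)=0$; if $X$ is a torsion sheaf, then $\Hom(X,Z)\neq 0$ forces $X\in\langle S_{\lambda}\rangle$, so $X$ is semistable, a contradiction. You instead avoid any appeal to the classification of indecomposables: you first force $\coker(f)=0$ by comparing the phase of the summand $\coker(f)[-1]$ of $Y$ with $\varphi^{-}(Y)>\varphi(Z)$, thereby upgrading the final HN triangle to a short exact sequence $0\to Y\to X\to Z\to 0$ in $\coh\X$, and then split it using $Z(\vec{\omega})\cong Z$ (the $\tau$-invariance of the homogeneous tube) together with Serre duality and $\Hom(Y,Z)=0$. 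Both arguments hinge on Lemma \ref{kernel and cokernel} and on the vanishing $\Hom(Y,Z)=0$ coming from the HN axioms; yours buys uniformity (no case division, and it would work verbatim on any weighted projective line and any ordinary point), while the paper's is shorter precisely because for weight type $(2)$ every indecomposable bundle is a line bundle. All the auxiliary facts you invoke are available in the paper: closure of $\Pi_{\varphi}$ under direct summands, Lemma \ref{S and Sn} to place the whole tube in $\Pi_{\varphi(Z)}$, and Lemma \ref{strictly increasing} (applicable since the t-stability is assumed finest throughout this subsection) --- though for the step killing $\coker(f)$ even the non-strict inequality $\tau_{\Phi}^{-1}(\varphi(Z))\leq\varphi(Z)$ would already suffice.
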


\begin{proof}
Suppose that $Z\in \langle S_{\lambda}\rangle$
for some $\lambda\in\mathbb{H}_ k$. Consider the exact sequence in
$\coh\X$: $$0\longrightarrow\ker(f)\longrightarrow X\stackrel{f}{\longrightarrow} Z\longrightarrow\coker(f)\longrightarrow 0.$$ It
follows from Lemma \ref{kernel and cokernel} that
$Y=\ker(f)\oplus\coker(f)[-1]$. If $X$ is a line bundle, then
$\ker(f)$ is a line bundle, hence $\Hom(\ker(f), Z)\neq 0$, a
contradiction. If $X$ is a torsion sheaf, then $X\in\langle
S_{\lambda}\rangle$, it follows that $X$ is semistable, again a
contradiction. This finishes the proof.
\end{proof}

\begin{lem}\label{isotropy} Let $X$ be an indecomposable object in $\cal{D}$. Then the final HN factor of $X$ has the form $E^n[j]$, where $j\in\mathbb{Z}, n\in\mathbb{N}$, $E$ is a line bundle or $E\in\{S_{1,i}, S_{1,i}^{(2m)} | i=0,1;\;m\in\mathbb{N}\}.$ Moreover, if $E\in\langle S_{1,i}^{(2)} \rangle$ for $i=0$ or $1$, then $n=1$.
\end{lem}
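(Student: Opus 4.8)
The plan is to reduce to an indecomposable sheaf and then read off the possible final factors from Theorem~\ref{semistable subcategory}, treating the three kinds of simple object (rigid sheaves, ordinary tubes, the exceptional tube) separately. Since $\coh\X$ is hereditary, every indecomposable object of $\mathcal{D}$ is $F[k]$ for an indecomposable sheaf $F$ and some $k\in\bbZ$; as $\Delta_{F[k]}=\Delta_F[k]$ and the asserted form $E^n[j]$ is stable under shift, I may assume $X=F$ is an indecomposable sheaf. The final HN factor $A_n$ is semistable, so $A_n\in\Pi_{\varphi_n}$, and by Theorem~\ref{semistable subcategory} (and its proof) $\Pi_{\varphi_n}=\langle E[j]\rangle$, where $E$ is a line bundle, an ordinary simple $S_\lambda$, an exceptional simple $S_{1,i}$, or $S_{1,i}^{(2)}$. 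When $E$ is a line bundle or $S_{1,i}$ it is rigid: $\Ext^1(\co(\vx),\co(\vx))\cong D\Hom(\co(\vx),\co(\vx+\vw))\cong DS_{\vw}=0$ (as $\vw$ is not effective), while $\Ext^1(S_{1,i},S_{1,i})=0$ by the extension data. Hence $\langle E[j]\rangle=\mathrm{add}(E[j])$ and $A_n\cong E^n[j]$, already of the required form with $n$ unrestricted.

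Next I would exclude the ordinary tubes, and the clean tool is Serre duality. For an ordinary point the tube is homogeneous: from $\Ext^1(S_\lambda,S_\lambda)\cong k(\lambda)\neq0$ and $\Ext^1(S_\lambda,S_\lambda)\cong D\Hom(S_\lambda,\tau S_\lambda)$ one gets $\Hom(S_\lambda,\tau S_\lambda)\neq0$; since $\tau S_\lambda$ is again a simple torsion sheaf this forces $\tau S_\lambda\cong S_\lambda$, whence $\tau S_\lambda^{(m)}\cong S_\lambda^{(m)}$ for all $m$. So if $A_n\in\langle S_\lambda[j]\rangle$, then $A_n=W[j]$ with $\tau W\cong W$, and the Serre functor $\mathbb{S}=\tau[1]$ on $\mathcal{D}$ satisfies $\mathbb{S}A_n\cong A_n[1]$.

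In the final HN triangle $Y\to X\to A_n\xrightarrow{\delta}Y[1]$ the map $\delta$ is nonzero (otherwise $X\cong Y\oplus A_n$, contradicting indecomposability). But Serre duality gives $\Hom(A_n,Y[1])\cong D\Hom(Y[1],\mathbb{S}A_n)\cong D\Hom(Y[1],A_n[1])\cong D\Hom(Y,A_n)$, so $\delta\neq0$ would force $\Hom(Y,A_n)\neq0$. This is impossible: every HN factor of $Y=E_{n-1}$ has phase strictly larger than $\varphi_n=\varphi(A_n)$, so $\Hom(Y,A_n)=0$ by axiom (ii) of a t-stability together with the filtration of $Y$. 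Thus $E\neq S_\lambda$ for any ordinary $\lambda$. (For $j=0$ this also follows from Lemma~\ref{not x}; the Serre argument covers all shifts at once, and it also shows why the exceptional tube is \emph{not} excluded, since there $\tau S_{1,i}^{(2m)}\cong S_{1,i+1}^{(2m)}\not\cong S_{1,i}^{(2m)}$.)

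It remains to treat $E=S_{1,i}^{(2)}$ and to prove $n=1$, i.e. that $A_n$ is indecomposable. Now $A_n=W[j]$ with $W$ in the uniserial length category $\langle S_{1,i}^{(2)}\rangle$, whose indecomposables are the $S_{1,i}^{(2m)}$, so $W=\bigoplus_k S_{1,i}^{(2m_k)}$. I would again exploit the final HN triangle: if $\delta$ vanishes on a nonzero summand $A'\subseteq A_n$, the octahedral axiom yields $X\cong A'\oplus X'$, and indecomposability of $X$ gives $X'=0$, forcing $Y\cong A_n[-1]$-type to be semistable of phase $\tau_\Phi^{-1}(\varphi_n)<\varphi_n$ and contradicting $\varphi^-(Y)>\varphi_n$; hence $\delta$ is nonzero on every summand, which by Serre duality says $\Hom(Y,S_{1,i+1}^{(2m_k)}[j])\neq0$ for each $k$. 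Combining this with the socle inclusions and top projections among the $S_{1,i}^{(2m_k)}$ and the instability of $S_{1,i+1}^{(2)}$ (Corollary~\ref{at most one semistable}, valid because $S_{1,i}^{(2)}$ \emph{is} semistable), I would run an elementary-operations argument on $\delta$ collapsing $W$ to a single indecomposable summand. Making this last step rigorous—ruling out a decomposable $A_n$ in the exceptional tube from the uniserial structure alone—is the main obstacle; the rigid cases and the exclusion of the ordinary tubes are comparatively formal, and it is precisely the self-extension $\Ext^1(S_{1,i}^{(2)},S_{1,i}^{(2)})\neq0$ (absent for line bundles and $S_{1,i}$) that both makes the claim nontrivial and leaves the single-summand conclusion as the delicate point.
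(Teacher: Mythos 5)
Your reduction to an indecomposable sheaf, the treatment of the rigid cases (line bundles and $S_{1,i}$, where $\langle E[j]\rangle=\mathrm{add}(E[j])$ makes the isotypic form automatic), and the exclusion of the ordinary tubes are all sound. Your Serre-duality argument for the homogeneous tubes --- $\tau S_\lambda\cong S_\lambda$ forces $\Hom(A_n,Y[1])\cong D\Hom(Y,A_n)=0$, so the final triangle splits --- is a genuinely different and more uniform route than the paper's, which instead excludes $\langle S_\lambda\rangle$ at the sheaf level via Lemma \ref{not x} (kernel of $f$ is a line bundle, or $X$ is already semistable). However, there is a genuine gap exactly where you flag it: when $A_n\in\langle S_{1,i}^{(2)}[j]\rangle$ you must show $A_n$ is indecomposable, and this is needed not only for the ``moreover'' but already for the first assertion, since a priori $A_n$ could be $S_{1,i}^{(2)}\oplus S_{1,i}^{(4)}$, which is not of the form $E^n[j]$. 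Your observation that the connecting map $\delta$ is nonzero on every direct summand of $A_n$, i.e.\ $\Hom(Y,S_{1,i+1}^{(2m_k)})\neq 0$ for each $k$, is a necessary condition but does not by itself rule out two summands, and the promised ``elementary-operations argument'' is not supplied.

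For comparison, the paper closes this step by first forcing $j=0$: arguing as in Lemma \ref{not x}, $X$ must lie in the exceptional tube $\langle S_{1,0},S_{1,1}\rangle$, whence $\Hom(X,S_{1,i}^{(2)})\neq 0$ if and only if $\Hom(X,S_{1,i}^{(2)}[1])\neq 0$, and Lemma \ref{HN-factor} then excludes the shifted copy as final factor. This makes the final triangle come from an honest sheaf map $f\colon X\to Z$, so Lemma \ref{kernel and cokernel} gives $Y\cong\ker(f)\oplus\coker(f)[-1]$, and $\Hom(Y,Z)=0$ forces $\ker(f),\coker(f)\in\langle S_{1,i}\rangle$ (among the indecomposables of the rank-two tube, only $S_{1,i}$ has neither a nonzero map to nor a nonzero extension by objects of $\langle S_{1,i}^{(2)}\rangle$). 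Uniseriality of the indecomposable torsion sheaf $X$ then gives $\ker(f)\in\{0,S_{1,i}\}$, hence ${\rm Im}(f)$ is uniserial and indecomposable, hence $\coker(f)\in\{0,S_{1,i}\}$, and $Z$ is indecomposable. Some version of this kernel--cokernel analysis (or an equally concrete use of the uniserial structure) is needed to complete your proof; without it the statement is not established for the exceptional tube.
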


\begin{proof} Without loss of generality, we assume that $X\in\coh\mathbb{X}$ and $\Delta_X$ has the form $Y\rightarrow X\xrightarrow{f} Z$. By Theorem \ref{semistable subcategory} and Lemma \ref{not x}, the final HN factor $Z$ of $X$ lies in $\langle E[j]\rangle$, where $j=0$ or $1$, $E$ is a line bundle or $E\in\{S_{1,i}, S_{1,i}^{(2)} | i=0,1\}.$ If $E$ is a line bundle or $S_{1,i},i=0,1$, then we are done. Otherwise, $E=S_{1,i}^{(2)}$ for $i=0$ or 1. Similar to the proof of Lemma \ref{not x}, we know that $X\in\langle S_{1,0},S_{1,1}\rangle$. It follows that $\Hom(X, S_{1,i}^{(2)})\neq 0$ if and only if $\Hom(X, S_{1,i}^{(2)}[1])\neq 0$. By Lemma \ref{HN-factor}, $S_{1,i}^{(2)}[1]$ is not the final HN factor of $X$. Hence $Z\in\langle S_{1,i}^{(2)}\rangle$. We claim that $Z$ is indecomposable. Indeed, consider the following exact sequence in $\coh\X$: $$0\longrightarrow\ker(f)\longrightarrow X\stackrel{f}{\longrightarrow} Z\longrightarrow\coker(f)\longrightarrow 0.$$
Then by Lemma \ref{kernel and cokernel} we have $Y\cong
\ker(f)\oplus \coker(f)[-1]$. It follows that
$\ker(f),\coker(f)\in\langle S_{1,i}\rangle$ by $\Hom(Y,Z)=0$.
Moreover, $X$ is indecomposable implies $\ker(f)\in\{0,S_{1,i}\}$.
Hence $\rm{Im}(f)$ is also indecomposable, which ensures
$\coker(f)\in\{0,S_{1,i}\}$. Therefore, $Z$ is indecomposable, i.e.,
$n=1$.
\end{proof}

\begin{thm} For any $X\in\cal{D}$ with $\Delta_{X}:$ $Y\xrightarrow{g} X\xrightarrow{f} Z$, where $Z\in\langle E[j]\rangle$ for some $j\in\bbZ$,
\begin{itemize}
  \item[(i)] if $E= S_{1,i}^{(2)}$ for $i=0$ or 1, then $f$ has the form $X\to S_{1,i}^{(2m)}[j]$ for some $m\geq 1$.
   \item[(ii)] if $E$ is exceptional, then $f$ has the form $X\xrightarrow{\ev} \Hom(X, E[j])\otimes E[j]$.
\end{itemize}
\end{thm}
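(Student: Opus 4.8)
The plan is to treat the two cases separately, since they rest on different features of the rank-two tube of weight type $(2)$. Part~(i) is essentially a reading-off of Lemma~\ref{isotropy}, whereas part~(ii) requires genuinely identifying the morphism $f$ (not merely its target) with the evaluation map, and this is where the real work lies.

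For (i), I would invoke Lemma~\ref{isotropy}: the final HN factor $Z$ of the indecomposable $X$ has the form $E^n[j]$, and the hypothesis $Z\in\langle E[j]\rangle$ with $E=S^{(2)}_{1,i}$ forces $n=1$, so $Z$ is indecomposable. It then remains to observe that, up to the shift $[j]$, the only indecomposable objects of $\langle S^{(2)}_{1,i}\rangle$ are the uniserials $S^{(2m)}_{1,i}$ with $m\geq 1$. Indeed, inside the tube at the exceptional point the object $S^{(2)}_{1,i}$ has a one-dimensional self-extension space with middle term $S^{(4)}_{1,i}$, and iterating gives $S^{(2m)}_{1,i}\in\langle S^{(2)}_{1,i}\rangle$; conversely a length induction shows that every indecomposable built from copies of $S^{(2)}_{1,i}$ is again of this shape (equivalently, $\langle S^{(2)}_{1,i}\rangle$ is a homogeneous length category with unique simple object $S^{(2)}_{1,i}$). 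Hence $Z=S^{(2m)}_{1,i}[j]$ and $f$ has the asserted form.

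For (ii), since $E$ is exceptional we have $\End(E)=k$ and $\Ext^1(E,E)=0$, so $\langle E[j]\rangle=\mathrm{add}(E[j])$ and therefore $Z=E^n[j]$ for some $n$. The strategy is to apply the cohomological functor $\Hom(-,E[j])$ to the final HN triangle $Y\xrightarrow{g}X\xrightarrow{f}Z\to Y[1]$ and to kill the two terms coming from $Y$. Every HN factor $A_l$ of $Y$ lies in some $\Pi_{\varphi_l}$ with $\varphi_l>\varphi(Z)$; since $Z=E^n[j]\in\Pi_{\varphi(Z)}$ and $\Pi_{\varphi(Z)}$ is closed under summands we have $E[j]\in\Pi_{\varphi(Z)}$, so axiom~(ii) of a t-stability gives $\Hom(A_l,E[j])=0$, and d\'evissage along the triangles of the HN filtration yields $\Hom(Y,E[j])=0$. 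For the other end, $\Hom(Y[1],E[j])=\Hom(Y,E[j-1])$, and by Lemma~\ref{strictly increasing} the phase of $E[j-1]$ equals $\tau_{\Phi}^{-1}(\varphi(Z))<\varphi(Z)<\varphi_l$, so the same axiom forces $\Hom(A_l,E[j-1])=0$ and hence $\Hom(Y[1],E[j])=0$. The long exact sequence then shows that $f^{*}\colon\Hom(Z,E[j])\to\Hom(X,E[j])$ is an isomorphism. Since $\Hom(Z,E[j])=\Hom(E^n[j],E[j])\cong k^n$ with basis the projections $\pi_1,\dots,\pi_n$, and $f^{*}(\pi_l)=\pi_l\circ f$ is the $l$-th component of $f$, the components of $f$ form a basis of $\Hom(X,E[j])$. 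In particular $n=\dim\Hom(X,E[j])$ and $f$ coincides, up to an automorphism of the target, with $\ev\colon X\to\Hom(X,E[j])\otimes E[j]$.

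The main obstacle is the double vanishing $\Hom(Y,E[j])=0=\Hom(Y[1],E[j])$ in (ii). The first is immediate from the orientation of phases in the HN filtration, but the second is more delicate: it genuinely needs the strict monotonicity of $\tau_{\Phi}$ from Lemma~\ref{strictly increasing} to guarantee that $E[j-1]$ has phase \emph{strictly} below $\varphi(Z)$, so that axiom~(ii) still applies. Once both vanishings are secured, the identification of $f$ with the evaluation map follows formally from $f^{*}$ being an isomorphism together with the exceptionality of $E$, and part~(i) reduces to the structure of $\langle S^{(2)}_{1,i}\rangle$ described above.
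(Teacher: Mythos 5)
Your proof is correct and takes essentially the same route as the paper's: part (i) is read off from Lemma~\ref{isotropy} together with the structure of $\langle S_{1,i}^{(2)}\rangle$, and part (ii) rests on exactly the two vanishings $\Hom(Y,E[j])=0$ and $\Hom(Y,E[j-1])=0$ plus $\End(E[j])\cong k$ — the paper merely packages these as two separate contradiction arguments (a map $f_0$ outside the span of the $f_l$ would force $\Hom(Y,E[j])\neq 0$; linear dependence or $n$ too large would split a copy of $E[j]$ off $Y[1]$, forcing $\Hom(Y,E[j-1])\neq 0$) instead of your single long-exact-sequence computation showing $f^{*}$ is an isomorphism, which is if anything the cleaner formulation. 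One small correction: the vanishing $\Hom(Y,E[j-1])=0$ does not genuinely need the strict monotonicity of $\tau_{\Phi}$ from Lemma~\ref{strictly increasing}; the defining axiom $\tau_{\Phi}(\varphi)\geq\varphi$ already gives $\varphi(E[j-1])\leq\varphi(E[j])<\varphi_l$ for every HN phase $\varphi_l$ of $Y$, and axiom (ii) of a t-stability only requires this non-strict comparison of $E[j-1]$ with $E[j]$.
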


\begin{proof}
By Theorem \ref{semistable subcategory}, the final HN factor has the
form $Z=E[j]^n$, where $j\in\mathbb{Z}, n\in\mathbb{N}$ and $E$ is a
coherent sheaf satisfying that $\End(E)$ is a division algebra. If
$E= S_{1,i}^{(2)}$ for $i=0$ or 1, then $Z$ is indecomposable by
Lemma \ref{isotropy}, we are done. If else, $E$ is exceptional.
Assume that $f=(f_1,f_2,\cdots,f_n)^{t}$ with $f_i\in\Hom(X, E[j])$.
We claim that $n=\dim\Hom(X, E[j])$ and $f_1,f_2,\cdots,f_n$ form a
basis of $\Hom(X, E[j])$. In fact, if $n<\dim\Hom(X, E[j])$, then
there exists a map $f_0\in\Hom(X, E[j])$ which is not a linear
combination of $\{f_1,f_2,\cdots,f_n\}$. Now consider the following
diagram:
$$\xymatrix{
  Y \ar[r]^{g} &X \ar[d]^{f_0}\ar[r]^{(f_1,f_2,\cdots,f_n)^{t}}& E[j]^n\\
  & E[j]  }
$$
Since $E$ is exceptional, we have $\Hom(E[j],E[j])\cong k$. Thus
$f_0$ can not factor through $(f_1,f_2,\cdots,f_n)^{t}$. It follows
that $f_0\circ g\neq 0$, hence $\Hom(Y, E[j])\neq 0$. This contradicts the
definition of HN-filtration. Now suppose
$n>\dim\Hom(X, E[j])$, or $n=\dim\Hom(X, E[j])$ with
$\{f_1,f_2,\cdots,f_n\}$ linearly dependent, we obtain that the
triangle $Y \xrightarrow{g} X \xrightarrow{(f_1,f_2,\cdots,f_n)^{t}}
E[j]^n\to Y[1]$ is split and $E[j]$ is a direct summand of $Y[1]$.
It follows that $\Hom(Y,E[j-1])\neq 0$, a contradiction. We are
done.
\end{proof}

In the following we describe the explicit forms of the final HN
triangles for special coherent sheaves.

\begin{prop}\label{hn for line bundle} Let $L$ be a line bundle. Then $\Delta_{L}$ has one of the following forms:
\begin{itemize}
  \item[(i)] $L((k+1)\vc)^{k}[-1]\rightarrow L\rightarrow L(k\vc)^{k+1},\quad k>0;$
  \item[(ii)] $L(-k\vc)^{k+1}\rightarrow L\rightarrow L(-(k+1)\vc)^{k}[1],\quad k>0;$
  \item[(iii)] $L((k+1)\vc+\vx_1)^{k}[-1]\oplus S_{1,L(\vx_1)}[-1]\to L\to L(k\vc+\vx_1)^{k+1},\quad k\geq 0;$
   \item[(iv)] $L(-k\vc)\oplus L(-k\vc+\vx_1)^{k}\to L\to L(-k\vc-\vx_1)^{k}[1],\quad k>0;$
   \item[(v)] $L(-\vx_1)\to L\to S_{1,L}.$
\end{itemize}
\end{prop}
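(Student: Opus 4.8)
The plan is to read off the possible shapes of the final HN factor from what is already established and then identify the fiber in each surviving case. By Theorem~\ref{semistable subcategory} and Lemma~\ref{isotropy} the final HN factor of $L$ is $Z=E[j]^n$ with $E$ a line bundle, $E=S_{1,i}$, or $E=S_{1,i}^{(2)}$; and since $L\in\coh\X$ and $\coh\X$ is hereditary, $\Hom(L,E[j])\neq0$ forces $j\in\{0,1\}$. When $E$ is exceptional the preceding theorem gives that $f$ is the evaluation map, so $n=\dim\Hom(L,E[j])$. I would run through the pairs $(E,j)$, discard those that cannot occur, and in each remaining case read off $Y$ from the triangle $Y\to L\xrightarrow{f}Z$ using Lemma~\ref{kernel and cokernel}.

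For $j=1$, Serre duality~\eqref{serre duality} gives $\Ext^1(L,E)\cong D\Hom(E,L(\vw))$; a torsion sheaf has no nonzero map to the line bundle $L(\vw)$, so $E$ must be a line bundle $L(m\vx_1)$. Since $\vw=-\vc-\vx_1$, one checks $\Ext^1(L,L(m\vx_1))\neq0$ exactly for $m\le-3$, and separating the two parities of $m$ produces the targets $L(-(k+1)\vc)[1]$ and $L(-k\vc-\vx_1)[1]$ of (ii) and (iv). The same vanishing rules out $E=S_{1,i}$ and $E=S_{1,i}^{(2)}$ for $j=1$.

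The one genuinely special point is the exclusion of $E=S_{1,i}^{(2)}$ when $j=0$ (the case $j=1$ having just been removed); this must be ruled out by hand, since the preceding theorem does permit such a factor for a general object. Suppose $Z=S_{1,i}^{(2m)}$ with $f:L\to S_{1,i}^{(2m)}$. As the target is torsion, $\ker(f)$ is a nonzero line bundle $L'$; but every line bundle maps nontrivially into $S_{1,i}^{(2m)}$, the image being its socle $S_{1,i+1}$ or its length-two subsheaf $S_{1,i}^{(2)}$ according to whether $S_{1,L'}$ equals $S_{1,i+1}$ or $S_{1,i}$. Hence $\Hom(\ker(f),Z)\neq0$, so $\Hom(Y,Z)\neq0$ because $Y=\ker(f)\oplus\coker(f)[-1]$, contradicting $\varphi^{-}(Y)>\varphi(Z)$. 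Thus in degree $0$ only line bundles and $E=S_{1,L}$ survive.

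Finally I would compute $Y$. For $E=S_{1,L}$ the only map (up to scalar) $L\to S_{1,L}$ is the epimorphism in $0\to L(-\vx_1)\to L\to S_{1,L}\to0$, so $Y=L(-\vx_1)$, which is (v). For $E=L(m\vx_1)$ with $m\ge1$ the evaluation map is injective (its component $x_1^{m}$ already is), whence $Y=\coker(f)[-1]$: for $m=2k$ the twists by $\vc$ are pulled back from $\mathbb{P}^1$, so $\coker(f)$ is the pullback of the classical cokernel of $\co\to\co(k)^{k+1}$, namely $L((k+1)\vc)^k$, giving (i); for $m=2k+1$ the weight-$2$ point contributes an extra torsion summand and $\coker(f)=L((k+1)\vc+\vx_1)^k\oplus S_{1,L(\vx_1)}$, giving (iii). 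Rotating the triangle in the $j=1$ cases presents $Y$ as the universal extension $0\to E^n\to Y\to L\to0$, which I would identify by a dual computation separating the parity of $m$. The hard part is exactly these explicit cokernel (and universal-extension) computations in the odd cases (iii), (iv): there one cannot conclude from rank and degree alone but must control the torsion contribution coming from the exceptional point.
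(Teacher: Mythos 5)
Your reduction to the list of possible final HN factors is correct, and in places more explicit than the paper's own argument: where the paper simply invokes Lemma \ref{isotropy} together with ``a similar proof for Lemma \ref{not x}'' to conclude that the final factor of $L$ is $S_{1,L}$, $L(\vx)$ or $L(-\vc-\vx)[1]$ with $\vx>0$, you pin down the admissible shifts and twists via Serre duality and exclude $S_{1,i}^{(2m)}$ by exhibiting a nonzero map from $\ker(f)$ into it. Both of these steps are sound, and your multiplicity counts ($n=\dim\Hom(L,E[j])$ via the evaluation-map theorem) match the stated exponents.

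The gap is exactly where you say it is, and flagging it does not discharge it. The identifications $\coker(\ev)\cong L((k+1)\vc+\vx_1)^{k}\oplus S_{1,L(\vx_1)}$ in case (iii), and dually $Y\cong L(-k\vc)\oplus L(-k\vc+\vx_1)^{k}$ in case (iv), \emph{are} the content of those two items; asserting that ``the weight-$2$ point contributes an extra torsion summand'' is not a proof, and rank--degree bookkeeping indeed cannot see why the torsion summand splits off or why it is $S_{1,L(\vx_1)}$ rather than some other length-one torsion sheaf. The paper closes this with a one-line d\'evissage you do not mention: every element of $\Hom(L,L(k\vc+\vx_1))$ is divisible by $x_1$, so $\Hom(L,L(k\vc+\vx_1))\cong\Hom(L(\vx_1),L(k\vc+\vx_1))$ and the evaluation map factors through $x_1\colon L\to L(\vx_1)$. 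Applying the octahedral axiom to this factorization together with $0\to L\to L(\vx_1)\to S_{1,L(\vx_1)}\to 0$ expresses the cone of $\ev$ in case (iii) as an extension of the already-known even-twist cone for $L(\vx_1)$ by $S_{1,L(\vx_1)}[-1]$, which splits because $\Ext^1(L((k+1)\vc+\vx_1),S_{1,L(\vx_1)})\cong D\Hom(S_{1,L(\vx_1)},L((k+1)\vc+\vx_1)(\vw))=0$; in case (iv) it realizes $Y$ as the kernel of a split-describable surjection $L(-k\vc+\vx_1)^{k+1}\to S_{1,L(\vx_1)}$. Adding this factorization step (or an equivalent explicit cokernel computation) is what is needed to turn your outline into a complete proof.
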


\begin{proof}
According to Lemma \ref{isotropy} and using a similar proof for
Lemma \ref{not x}, we know that the possible final HN factors for
$L$ are $S_{1,L}$, $L(\vx)$ or $L(-\vc-\vx)[1]$ for some $\vx>0$.
Now it remains to show that the (co)cone of the evaluation maps have
the desired forms. The first two triangles follow from the embedding
$S_{1,L}^{\bot}=\coh\bbp^1\hookrightarrow \coh\X$ and \cite[Lemma
3.1]{SO}, and the last triangle is trivial. We only show (iii) and
(iv). By applying $\Hom(-,L(k\vc+\vx_1))$ to the exact sequence
$0\to L\to L(\vx_1) \to S_{1,L(\vx_1)}\to 0$, we get an isomorphism
  $\Hom(L,L(k\vc+\vx_1))\cong \Hom(L(\vx_1),L(k\vc+\vx_1))$. Then the statement (iii) follows from the following pullback diagram:
  $$\xymatrix{
  L((k+1)\vc+\vx_1)^{k}[-1] \oplus S_{1,L(\vx_1)}[-1] \ar[r]\ar[d]& L \ar[d]^{x_1}\ar[r]^{\ev}& L(k\vc+\vx_1)^{k+1}\ar@{=}[d]\\
  L((k+1)\vc+\vx_1)^{k}[-1] \ar[r]\ar[d]& L(\vx_1) \ar[d]\ar[r]^{\ev}& L(k\vc+\vx_1)^{k+1}\\
  S_{1,L(\vx_1)} \ar@{=}[r] & S_{1,L(\vx_1)}.  }
$$
Similarly, the statement (iv) follows from the following pullback
diagram:
 $$\xymatrix{
  L(-k\vc)\oplus L(-k\vc+\vx_1)^{k}\ar[r]\ar[d]& L \ar[d]^{x_1}\ar[r]^{\ev}& L(-k\vc-\vx_1)^{k}[1]\ar@{=}[d]\\
  L(-k\vc+\vx_1)^{k+1} \ar[r]\ar[d]& L(\vx_1) \ar[d]\ar[r]^{\ev}& L(-k\vc-\vx_1)^{k}[1]\\
  S_{1,L(\vx_1)} \ar@{=}[r] & S_{1,L(\vx_1)}.  }
$$
 \end{proof}

Let $L$ be a line bundle and let $Y\to X\to L[j]^n$ be the final HN
triangle of $X$ for some $j\in\mathbb{Z}$ and $n\in\mathbb{N}$. If
$L(\vc)[j-1]$ is a direct summand of $Y$, then we say that
$\Delta_{X}$ is of type $(L, L(\vc))$. It follows from the above
proposition that each line bundle $L$ is of type $(L(\vx),
L(\vx+\vc))$ for some $\vx$, or $\Delta_{L}$ has the form
$$L(-\vx_1)\longrightarrow L\longrightarrow S_{1,L} \text{\quad or\quad} S_{1,L(\vx_1)}[-1]\longrightarrow L\longrightarrow L(\vx_1).$$

 \begin{prop}\label{hn for s1i} The final HN triangle of $S_{1,i}, i=0,1$ has one of the following forms:
\begin{itemize}
  \item[(i)] $\co(k\vc+i\vx_1)\to S_{1,i}\to \co(k\vc+(i-1)\vx_1)[1]$ for $k\in\bbZ$;
  \item[(ii)] $S_{1,i}^{(2)}\to S_{1,i}\to S_{1,i+1}[1];$
  \item[(iii)] $S_{1,i+1}[-1] \to S_{1,i}\to S_{1,i+1}^{(2)}.$
\end{itemize}
\end{prop}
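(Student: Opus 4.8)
\emph{Proof proposal.} My plan is to pin down the final HN factor $Z$ of $S_{1,i}$ and then recover $\Delta_{S_{1,i}}$ from it. By the theorem stated just before Proposition~\ref{hn for line bundle} together with Lemma~\ref{isotropy}, $Z$ must have one of two shapes: either $Z=E[j]^{n}$ with $E$ exceptional and $f$ the evaluation map, or $Z=S_{1,l}^{(2m)}[j]$, an indecomposable object of an isotropic tube. The first step is to record that $\tau S_{1,i}=S_{1,i+1}$, which is immediate from $\Ext^{1}(S_{1,i},S_{1,i+1})\neq 0$ and Serre duality~\eqref{serre duality}, and then to compute $\Hom(S_{1,i},E[j])$ for each admissible $E$; since $\coh\X$ is hereditary, only $j\in\{0,1\}$ can contribute.

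For $E=\co(\vx)$ a line bundle one has $\Hom(S_{1,i},\co(\vx))=0$, while $\Ext^{1}(S_{1,i},\co(\vx))\cong D\Hom(\co(\vx),S_{1,i+1})$ is one--dimensional precisely when $\vx\equiv(i+1)\vx_1\equiv(i-1)\vx_1\pmod{\vc}$, i.e. $\vx=k\vc+(i-1)\vx_1$ for some $k\in\bbZ$. Thus $Z=\co(k\vc+(i-1)\vx_1)[1]$, and rotating the defining sequence $0\to\co(k\vc+(i-1)\vx_1)\to\co(k\vc+i\vx_1)\to S_{1,i}\to 0$ identifies $Y=\co(k\vc+i\vx_1)$, which is form (i). For $E=S_{1,l}$ one has $\Hom(S_{1,i},S_{1,l})=0$ unless $l=i$ (excluded, since it would make $S_{1,i}$ semistable) and $\Ext^{1}(S_{1,i},S_{1,l})\neq 0$ only for $l=i+1$ (again one--dimensional); hence $Z=S_{1,i+1}[1]$, and rotating $0\to S_{1,i+1}\to S_{1,i}^{(2)}\to S_{1,i}\to 0$ gives $Y=S_{1,i}^{(2)}$, which is form (ii).

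The remaining case is $Z=S_{1,l}^{(2m)}[j]$. A nonzero map forces $l=i+1$ for both $j=0$ (landing in the socle $S_{1,i}$ of $S_{1,i+1}^{(2m)}$) and $j=1$ (via $\Ext^{1}$ and the top $S_{1,i+1}$). I would eliminate $j=1$ using Lemma~\ref{HN-factor}: if $Z=S_{1,i+1}^{(2m)}[1]$ is semistable then so is $S_{1,i+1}^{(2)}$ by Lemma~\ref{S and Sn}; as $\Hom(S_{1,i},S_{1,i+1}^{(2)})\neq 0$ and $\Hom(S_{1,i},Z)\neq 0$ while $\varphi(S_{1,i+1}^{(2)})<\varphi(Z)$, Lemma~\ref{HN-factor} shows $Z$ is not the final HN factor. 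This leaves $j=0$, $Z=S_{1,i+1}^{(2m)}$, and when $m=1$ rotating $0\to S_{1,i}\to S_{1,i+1}^{(2)}\to S_{1,i+1}\to 0$ yields $Y=S_{1,i+1}[-1]$, which is form (iii).

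The main obstacle is ruling out $m\geq 2$ here, and this is where I would spend the effort. Set $\psi=\varphi(S_{1,i+1}^{(2)})$, so every $S_{1,i+1}^{(2m)}$ is semistable of phase $\psi$ by Lemma~\ref{S and Sn}. Assume $m\geq 2$. The map $S_{1,i}\to S_{1,i+1}^{(2m)}$ is injective onto the socle, so Lemma~\ref{kernel and cokernel} gives $Y=S_{1,i+1}^{(2m-1)}[-1]$. Now $S_{1,i+1}^{(2m-1)}$ surjects, by quotienting its socle, onto $S_{1,i+1}^{(2m-2)}$, a nonzero semistable sheaf of phase $\psi$; since $\Hom(\Pi_{\varphi'},\Pi_{\psi})=0$ for every $\varphi'>\psi$, the existence of this surjection forces $\varphi^{-}(S_{1,i+1}^{(2m-1)})\leq\psi$. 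As $\tau_{\Phi}$ is strictly increasing (Lemma~\ref{strictly increasing}), we get $\varphi^{-}(Y)=\tau_{\Phi}^{-1}(\varphi^{-}(S_{1,i+1}^{(2m-1)}))\leq\tau_{\Phi}^{-1}(\psi)<\psi$. But a final HN triangle demands $\varphi^{-}(Y)>\varphi(Z)=\psi$, a contradiction. Hence $m=1$, and the forms (i)--(iii) exhaust all possibilities.
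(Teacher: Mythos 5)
Your proposal is correct and follows essentially the same route as the paper: both reduce to Lemma~\ref{isotropy} to list the possible final HN factors, recover the cocones from the standard short exact sequences, and then exclude $S_{1,i+1}^{(2m)}$ with $m\geq 2$ by showing the would-be cocone $S_{1,i+1}^{(2m-1)}[-1]$ violates the HN phase condition. The only (equivalent) variation is at that last step, where the paper observes directly that $\Hom(S_{1,i+1}^{(2m-1)}[-1],S_{1,i+1}^{(2m)})\neq 0$ for $m\geq 2$, while you bound $\varphi^{-}(S_{1,i+1}^{(2m-1)}[-1])$ via the semistable quotient $S_{1,i+1}^{(2m-2)}$.
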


\begin{proof}
 By Lemma \ref{isotropy}, all the possibilities of the final HN factor of $S_{1,i}$ are given by $S_{1,i+1}[1]$, $S_{1,i+1}^{(2n)}$ for $n\geq 1$, and $\co(k\vc+(i-1)\vx_1)[1]$ for $k\in\bbZ$. If the final HN factor is $\co(k\vc+(i-1)\vx_1)[1]$ or $S_{1,i+1}[1]$, then it is easy to see that $\Delta_{S_{1,i}}$ has the form in (i),(ii) respectively. Now assume the final HN factor is $S_{1,i+1}^{(2n)}$, then $\Delta_{S_{1,i}}$ is given by $S_{1,i+1}^{(2n-1)}[-1] \to S_{1,i}\to S_{1,i+1}^{(2n)}.$
By $\Hom(S_{1,i+1}^{(2n-1)}[-1], S_{1,i+1}^{(2n)})=0$ we obtain
$n=1$, hence (iii) holds.
\end{proof}

 \begin{prop}\label{hn for s}
The final HN triangle of $S_{1,i}^{(2)}, i=0,1$ has one of the
following forms:
\begin{itemize}
  \item[(i)] $S_{1,i+1} \to S_{1,i}^{(2)}\to S_{1,i};$
    \item[(ii)] $S_{1,i+1}\oplus S_{1,i+1}[-1] \to S_{1,i}^{(2)}\to S_{1,i+1}^{(2)};$
  \item[(iii)] $\co(k\vc+\vc+i\vx_1)\to S_{1,i}^{(2)}\to \co(k\vc+i\vx_1)[1]$ for $k\in\bbZ$;
  \item[(iv)] $\co(k\vc+i\vx_1)\oplus S_{1,i+1}\to S_{1,i}^{(2)}\to \co(k\vc+(i-1)\vx_1)[1]$ for $k\in\bbZ$.
\end{itemize}
\end{prop}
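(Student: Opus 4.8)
The plan is to follow the scheme of Propositions~\ref{hn for line bundle} and \ref{hn for s1i}: first list the candidates for the final HN factor of the indecomposable sheaf $X=S_{1,i}^{(2)}$, then discard the ones incompatible with finality, and finally read off the triangle in each surviving case. By Lemma~\ref{isotropy} the final HN factor has the form $E^{n}[j]$ with $E$ a line bundle, an $S_{1,i'}$, or an $S_{1,i'}^{(2m)}$ (with $n=1$ in the last case); and since $X$ and every such $E$ lie in the hereditary heart $\coh\X$, only $j\in\{0,1\}$ can occur, because $\Hom(X,E[j])=\Ext^{j}(X,E)=0$ otherwise. Computing $\Hom(X,E)$ and $\Ext^{1}(X,E)$ from the uniserial structure of the rank-two tube at $\lambda_1$ and from Serre duality~(\ref{serre duality}) (note $\tau=(\vw)$ acts on the tube by $S_{1,i'}^{(m)}\mapsto S_{1,i'+1}^{(m)}$), I would obtain the candidates $S_{1,i}$ and $S_{1,i}[1]$; the objects $S_{1,i}^{(2m)}[j]$ and $S_{1,i+1}^{(2m)}[j]$ for $m\ge1$; and $L[1]$ for every line bundle $L$ (whereas $E=S_{1,i+1}$ gives nothing, as does $j=0$ for a line bundle since $\Hom(X,L)=0$).

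The pruning is the core of the proof. The factors $S_{1,i}^{(2m)}[j]$ (including $Z=X$ itself) are excluded because a final HN factor is semistable, so Lemma~\ref{S and Sn} would force $X=S_{1,i}^{(2)}$ to be semistable, contrary to assumption. The factors $S_{1,i}[1]$ and $S_{1,i+1}^{(2m)}[1]$ are excluded by a phase comparison: if such a $Z$ were the final factor it would be semistable, whence by Lemma~\ref{S and Sn} the unshifted object $S_{1,i}$, respectively $S_{1,i+1}^{(2)}$, is semistable of strictly smaller phase (Lemma~\ref{strictly increasing}), while $X$ still maps nontrivially onto it; Lemma~\ref{HN-factor} then rules $Z$ out. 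The delicate case is $Z=S_{1,i+1}^{(2m)}$ with $m\ge2$: here $\dim\Hom(X,Z)=1$, so $f$ is the unique nonzero map, and its image is the socle $S_{1,i}$ of $Z$. Lemma~\ref{kernel and cokernel} then gives $Y\cong S_{1,i+1}\oplus S_{1,i+1}^{(2m-1)}[-1]$, and Serre duality yields $\Hom(Y,Z)\supseteq\Ext^{1}(S_{1,i+1}^{(2m-1)},S_{1,i+1}^{(2m)})\cong D\Hom(S_{1,i+1}^{(2m)},S_{1,i}^{(2m-1)})$, which is nonzero for $m\ge2$ because the common subquotient $S_{1,i+1}^{(2)}$ produces a nonzero map. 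This contradicts $\varphi^{-}(Y)>\varphi(Z)$ and forces $m=1$.

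For the three surviving factors I would read off the triangles with Lemma~\ref{kernel and cokernel}. When $Z=S_{1,i}$ the map $f$ is the canonical surjection, with kernel $S_{1,i+1}$ and zero cokernel, giving (i). When $Z=S_{1,i+1}^{(2)}$, the $m=1$ instance of the computation above gives $Y\cong S_{1,i+1}\oplus S_{1,i+1}[-1]$, which is (ii). When $Z=L[1]$ (so $\dim\Ext^{1}(X,L)=1$ and $Z$ is a single shifted line bundle), rotating the final triangle produces the unique nonsplit extension $0\to L\to Y\to X\to0$; I would then split into two cases according to $S_{1,L}$. If $S_{1,L}=S_{1,i}$ I identify $Y=L(\vc)$ via the section $x_1^{2}\colon L\to L(\vc)$, whose cokernel is exactly $S_{1,i}^{(2)}$, giving (iii); if $S_{1,L}=S_{1,i+1}$ the extension class is pulled back from $\Ext^{1}(S_{1,i},L)$ along $X\twoheadrightarrow S_{1,i}$, so $Y$ lies in $0\to S_{1,i+1}\to Y\to L(\vx_1)\to0$, which splits because $\Ext^{1}(L(\vx_1),S_{1,i+1})=0$, giving $Y=L(\vx_1)\oplus S_{1,i+1}$ and hence (iv). In each surviving case a short Serre-duality check ($\Hom(Y,Z)=0$) confirms that the triangle is genuinely final. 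The hard part throughout is the bookkeeping of submodule/quotient structure inside the rank-two tube together with the repeated Serre-duality computations; in particular the exclusion of $S_{1,i+1}^{(2m)}$ for $m\ge2$ and the splitting argument producing the decomposable middle term in case (iv) are the steps requiring the most care.
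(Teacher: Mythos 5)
Your proof is correct and follows essentially the same route as the paper: list the candidate final HN factors via Lemma~\ref{isotropy}, prune them with Lemmas~\ref{S and Sn} and~\ref{HN-factor} (and the $\Hom(Y,Z)=0$ constraint to force $m=1$, exactly as in Proposition~\ref{hn for s1i}), and compute the cocone with Lemma~\ref{kernel and cokernel}. The only cosmetic difference is in case (iv), where the paper exhibits $Y=\co(k\vc+i\vx_1)\oplus S_{1,i+1}$ through an explicit pullback diagram of line bundles while you pull back the extension class along $X\twoheadrightarrow S_{1,i}$ and invoke $\Ext^1(L(\vx_1),S_{1,i+1})=0$ to split; both are sound, and you also spell out the $m\ge 2$ exclusion that the paper leaves implicit.
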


\begin{proof} The proof is similar as in the above corollary. Here we only treat the triangles (ii) and (iv), the other two are trivial.
Note that $\dim\Hom(S_{1,i}^{(2)}, S_{1,i+1}^{(2)})=1$, and both of
kernel and cokernel for any non-zero morphism in
$\Hom(S_{1,i}^{(2)}, S_{1,i+1}^{(2)})$ are given by $S_{1,i+1}$,
then (ii) follows. The triangle (iv) follows from the following
pullback diagram:
$$\xymatrix{
 \co((k-1)\vc+i\vx_1) \ar[r]\ar@{=}[d]& \co(k\vc+(i-1)\vx_1) \ar[r]\ar[d]& S_{1,i+1} \ar[d]\\
  \co((k-1)\vc+i\vx_1) \ar[r]&\co(k\vc+i\vx_1)\ar[d] \ar[r]& S_{1,i}^{(2)}\ar[d]\\
  &S_{1,i} \ar@{=}[r]& S_{1,i}.  }
$$
\end{proof}

\subsection{The distance $d(\Phi)$}

\begin{lem} Let $m$ be the number of semistable line bundles (up to shift). Then $m\geq 2$.
\end{lem}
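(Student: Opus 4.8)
The plan is to establish $m\geq 1$ by a rank computation and then to exclude the possibility $m=1$, so that $m\geq 2$ follows. For the first half I would take any line bundle $L$ with HN factors $A_1,\dots,A_n$, so that $[L]=\sum_j[A_j]$ in $K_0(\mathcal{D})$. By Theorem~\ref{semistable subcategory} each $A_j=E_j^{n_j}[k_j]$ with $E_j$ a line bundle or one of $S_\lambda$, $S_{1,i}$, $S_{1,i}^{(2)}$, and among these only line bundles have nonzero rank. Since $\sum_j\mathrm{rk}(A_j)=\mathrm{rk}(L)=1\neq 0$, some $E_j$ must be a semistable line bundle, giving $m\geq 1$.

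Now suppose towards a contradiction that $m=1$, and let $L_0\in\Pi_{\varphi_0}$ be the unique semistable line bundle up to shift. I would feed the two neighbours $L_0(\vx_1)$ and $L_0(\vc)$ into Proposition~\ref{hn for line bundle}. In each of the forms (i)--(iv) the final HN factor is a power of $\co(\vw)$ or of $\co(\vw)[1]$; being a final HN factor it is semistable, so $\co(\vw)$ is forced to equal $L_0$. Equating twists then forces a nonzero element of $\bbL$ (a nonzero multiple of $\vc$, possibly adjusted by $\vx_1$) to vanish, which is impossible since $\bbL$ is torsion-free. Hence both final HN triangles are of type (v):
$$\Delta_{L_0(\vx_1)}\colon\ L_0\longrightarrow L_0(\vx_1)\longrightarrow S_{1,L_0(\vx_1)},\qquad \Delta_{L_0(\vc)}\colon\ L_0(\vx_1)\longrightarrow L_0(\vc)\longrightarrow S_{1,L_0},$$
where I have used $S_{1,L_0(\vc)}\cong S_{1,L_0}$, coming from the periodicity $S_{1,\cdot}(\vc)\cong S_{1,\cdot}$ in weight type $(2)$, together with $L_0(\vc)(-\vx_1)\cong L_0(\vx_1)$.

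From $\Delta_{L_0(\vx_1)}$ the top HN factor of $L_0(\vx_1)$ is $L_0$, so $\varphi^{+}(L_0(\vx_1))=\varphi_0$; since splitting off a final HN factor leaves the top factor unchanged, $\varphi^{+}(L_0(\vc))=\varphi^{+}(L_0(\vx_1))=\varphi_0$ as well. On the other hand $S_{1,L_0}$ is the final HN factor of $L_0(\vc)$, hence semistable, with $\varphi(S_{1,L_0})=\varphi^{-}(L_0(\vc))$; as $L_0(\vc)$ is not semistable, this is strictly smaller than $\varphi_0$. But the defining sequence $0\to L_0(-\vx_1)\xrightarrow{x_1}L_0\to S_{1,L_0}\to 0$ gives a nonzero morphism $L_0\to S_{1,L_0}$ in $\mathcal{D}$, whereas $\varphi_0=\varphi(L_0)>\varphi(S_{1,L_0})$ forces $\Hom(L_0,S_{1,L_0})=0$ by condition~(ii) in the definition of a t-stability. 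This contradiction rules out $m=1$, and together with the first paragraph yields $m\geq 2$.

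The main obstacle is the rigidity used above: one must verify carefully that the hypothesis ``$L_0$ is the only semistable line bundle'' really forces both $\Delta_{L_0(\vx_1)}$ and $\Delta_{L_0(\vc)}$ into form (v) and pins $\varphi^{+}(L_0(\vc))=\varphi_0$, which is where the torsion-freeness of $\bbL$ and the structure of Proposition~\ref{hn for line bundle} are essential. The conceptual heart is that a unique semistable line bundle would make $S_{1,L_0}$ simultaneously the lowest-phase HN factor of $L_0(\vc)$ and a nonzero quotient of $L_0$, which the semistability of $L_0$ prohibits.
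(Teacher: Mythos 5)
Your proof is correct and follows essentially the same route as the paper: both force the final HN triangles of $L(\vx_1)$ and $L(\vc)$ into form (v) of Proposition \ref{hn for line bundle} and then derive the contradiction from $\Hom(L,S_{1,L})\neq 0$ against the resulting two-step HN filtration of $L(\vc)$ whose top factor is $L$ and whose bottom factor is $S_{1,L}$. Your preliminary rank argument for $m\geq 1$ is a useful supplement, since the paper's ``say $L$ if it exists'' leaves the case $m=0$ implicit.
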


\begin{proof} Suppose that $m\leq 1$, i.e. there is at most one semistable line bundle, say $L$ if it exists. It follows that $L(\vc)$ and $L(\vx_1)$ are not semistable. By Proposition \ref{hn for line bundle} we obtain the HN filtration of $L(\vc)$ as follows:
\begin{equation}\label{two term of hn for Lc}
\xymatrix{
  L \ar[r] \ar@{.}[rd]& L(\vx_1)\ar@{.}[rd]\ar[r]\ar[d]& L(\vc) \ar[d]\\
  &S_{1,L(\vx_1)}& S_{1,L}.  }
\end{equation}
Then $\Hom(L,S_{1,L})\neq 0$ yields a contradiction. Hence $m\geq
2$.
\end{proof}

Define the \emph{distance} of the t-stability
$(\Phi,\{\Pi_{\varphi}\}_{\varphi\in\Phi})$ by
$$d(\Phi)={\rm{min}}\{\vy-\vx\mid \co(\vx),
\co(\vy)\;{\rm{are\;semistable}}\}.$$ It is well-defined since we
have at least two different line bundles by the above lemma and
$\bbL$ is a total order. Furthermore, we have the following result.
\begin{prop}\label{ss-line-bundle} $d(\Phi)\leq \vc$.
\end{prop}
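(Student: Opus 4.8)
The plan is to argue by contradiction. Suppose $d(\Phi)>\vc$ and choose semistable line bundles $\co(\vx)<\co(\vy)$ realizing the minimum, so that $\vy-\vx=d(\Phi)$ and no semistable line bundle lies strictly between them; this is possible since the preceding lemma gives at least two semistable line bundles. In particular $\co(\vx+\vc)$, and every $\co(\vz)$ with $\vx<\vz<\vy$, fails to be semistable. Write $a=\varphi(\co(\vx))$ and $b=\varphi(\co(\vy))$.

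First I would pin down the two phases. Since $\Hom(\co(\vx),\co(\vy))\cong S_{\vy-\vx}\neq0$ and both line bundles are semistable, condition (ii) of the $t$-stability forces $a\le b$; as $\Hom(\co(\vy),\co(\vx))=0$, Theorem \ref{iff cond for finest} rules out $a=b$, so $a<b$. Next, Serre duality \eqref{serre duality} gives $\Ext^1(\co(\vy),\co(\vx))\cong D\Hom(\co(\vx),\co(\vy-\vx_1))\neq0$ (using $d(\Phi)\ge\vx_1$), so a nonzero map $\co(\vy)\to\co(\vx)[1]$ together with condition (ii) yields $b\le\tau_{\Phi}(a)$; and $b=\tau_{\Phi}(a)$ is impossible, since then $\co(\vy)[-1]$ would be semistable of phase $a$ while $\Hom(\co(\vx),\co(\vy)[-1])=0$ by heredity, contradicting Theorem \ref{iff cond for finest}. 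Hence $a<b<\tau_{\Phi}(a)$; equivalently $\co(\vx)$ and $\co(\vy)$ lie in the common heart $\Pi_{[a,\tau_{\Phi}(a))}$ of Proposition \ref{t-stability-t-structure}.

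The core step is to analyze the final HN triangle of the non-semistable line bundle $\co(\vx+\vc)$ via Proposition \ref{hn for line bundle}. By Lemma \ref{not x} its final HN factor is never an ordinary simple, so by Theorem \ref{semistable subcategory} it is (a shift of a power of) a semistable line bundle or an exceptional simple $S_{1,\cdot}$. If that factor is a line bundle $\co(\vz)$ with $\vz>\vx$, then $\co(\vz)$ is semistable and $>\co(\vx)$, hence $\co(\vz)\ge\co(\vy)$ by adjacency; on the other hand $\Hom(\co(\vx+\vc),\co(\vy))\neq0$ gives $\varphi(\co(\vz))=\varphi^{-}(\co(\vx+\vc))\le b$, while if $\vz>\vy$ the map $\co(\vy)\to\co(\vz)$ forces $b\le\varphi(\co(\vz))$, so $\co(\vy),\co(\vz)$ would be distinct semistable line bundles of the same phase with $\Hom(\co(\vz),\co(\vy))=0$, contradicting Theorem \ref{iff cond for finest}; therefore $\co(\vz)=\co(\vy)$. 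In the remaining configurations one feeds in the defining sequences $0\to\co(\vx)\to\co(\vx+\vc)\to S_{\lambda}\to0$ and, after identifying the two exceptional simples $S_{1,0},S_{1,1}$ occurring along the chain $\co(\vx+\vx_1),\co(\vx+\vc),\dots$, uses the nonzero extensions $\Ext^1(S_{1,i},S_{1,i'})\neq0$ and Serre duality to exhibit a nonzero morphism from a higher-phase to a lower-phase semistable object, violating condition (ii) — the same mechanism by which $\Hom(\co(\vx),S_{1,\co(\vx)})\neq0$ produced the contradiction in the proof that $m\ge2$.

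The main obstacle is precisely this last point: when the final HN factor of $\co(\vx+\vc)$ is an exceptional simple $S_{1,\cdot}$ or a shifted line bundle lying below $\co(\vx)$, no contradiction is visible from the phase inequalities $a<b<\tau_{\Phi}(a)$ alone. Here I expect to iterate the final-HN-factor analysis along the successive line bundles between $\co(\vx)$ and $\co(\vy)$, tracking how the top HN factor propagates, and to control the phases of $S_{1,0}$ and $S_{1,1}$ by means of Corollaries \ref{quasimple} and \ref{at most one semistable}, until a morphism incompatible with the phase order of condition (ii) is forced. Managing this finite but delicate case analysis, rather than the clean phase confinement of the second paragraph, is the crux of the argument.
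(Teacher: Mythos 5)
Your proposal is not a complete proof: it stops exactly where the work is. You handle only the case in which the final HN factor of $\co(\vx+\vc)$ is an unshifted line bundle above $\co(\vx)$ (and even there you only conclude that the factor must be $\co(\vy)$, without extracting a contradiction — you would still need to apply Lemma \ref{no-existence} to the cocone $\co(\vy+\vc)^{k}[-1]$ with $W=\co(\vx)$ to finish). For the remaining possibilities from Proposition \ref{hn for line bundle} — final factor $S_{1,\cdot}$, or a shifted lower twist $\co(\vz)[1]$ — you explicitly write that you ``expect to iterate the final-HN-factor analysis'' and that ``managing this finite but delicate case analysis \ldots is the crux of the argument.'' That crux is the proof; as written the argument has a genuine gap. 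There is also a computational slip in your phase-confinement paragraph: for weight type $(2)$ the dualizing element is $\vw=-\vc-\vx_1$, so Serre duality gives $\Ext^1(\co(\vy),\co(\vx))\cong D\Hom(\co(\vx),\co(\vy-\vc-\vx_1))$, which is nonzero because $\vy-\vx>\vc$ forces $\vy-\vx\geq\vc+\vx_1$; your formula $D\Hom(\co(\vx),\co(\vy-\vx_1))$ and the justification ``$d(\Phi)\ge\vx_1$'' are both wrong, although the conclusion survives. (That whole paragraph is in any case not needed for the contradiction.)

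For comparison, the paper avoids the open-ended iteration entirely. Under $d(\Phi)>\vc$ it fixes one semistable line bundle $L$ and analyzes the two non-semistable bundles $L(\vx_1)$ and $L(\vc)$. Lemma \ref{no-existence} with $W=L$ rules out every final HN triangle of type $(L(\vz),L(\vz+\vc))$ for both of them (since then $\vz\le-\vc-\vx_1$ or $\vz\ge\vc+\vx_1$, and $L$ maps nontrivially out of the cocone while having smaller phase than the final factor). This pins $\Delta_{L(\vx_1)}$ down to $L\to L(\vx_1)\to S_{1,L(\vx_1)}$ and leaves only two shapes for $\Delta_{L(\vc)}$: the first is killed because the composite filtration $L\subset L(\vx_1)\subset L(\vc)$ has final factor $S_{1,L}$ while $\Hom(L,S_{1,L})\neq0$ violates the phase ordering, and the second is killed by Lemma \ref{no-existence} with $W=L$ again. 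If you want to salvage your route through $\co(\vx+\vc)$, you must actually carry out and close each of the deferred cases; the paper's choice of $L(\vx_1)$ and $L(\vc)$ is what makes the case list short and terminating.
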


\begin{proof}
Suppose that $d(\Phi)=\vy>\vc$. Let $L$ be a
semistable line bundle. Then $L(\pm\vx_1)$ and $L(\pm\vc)$ are not
semistable.

We claim that $\Delta_{L(\vx_1)}$ has the form $L\to L(\vx_1)\to
S_{1,L(\vx_1)}$. In fact, by Proposition \ref{hn for line bundle} it
suffices to show that $\Delta_{L(\vx_1)}$ is not of type $(L(\vz),
L(\vz+\vc))$ for any $\vz$. Otherwise, we have $\vz\leq -\vc-\vx_1$
or $\vz\geq \vc+\vx_1$. We can always take $W=L$ to deduce a
contradiction to Lemma \ref{no-existence}. This finishes the proof
of the claim.

Similarly, one can show that $\Delta_{L(\vc)}$ is not of type
$(L(\vz), L(\vz+\vc))$ for any $\vz$. It follows that
$\Delta_{L(\vc)}$ is
$$L(\vx_1)\to L(\vc)\to S_{1,L}\;\text{or}\;
S_{1,L(\vx_1)}[-1]\to L(\vc)\to L(\vc+\vx_1).$$
For the first case,
the HN filtration of $L(\vc)$ has the form (\ref{two term of hn for
Lc}), which yields a contradiction. For the second case, since
$\Hom(S_{1,L(\vx_1)}[-1],L)\neq 0$ and $\Hom(L,L(\vc+\vx_1))\neq 0$,
we take $W=L$ to get a contradiction to Lemma \ref{no-existence}.
This finishes the proof.
\end{proof}

Now we can say more on the final HN triangles for exceptional
sheaves.

\begin{prop}\label{x} Assume $d(\Phi)=\vx_1$ with $L, L(\vx_1)$ semistable for some line bundle $L$. Then
\begin{itemize}
  \item [(i)] $\Delta_{S_{1,L}}$ has the form $L(k\vc)\to S_{1,L}\to L(k\vc-\vx_1)[1]$, where $k=0$ or 1;
  \item [(ii)] $\Delta_{S_{1,L(\vx_1)}}$ has the form $L(\vx_1)\to
S_{1,L(\vx_1)}\to L[1]$;
  \item [(iii)] for any line bundle $L'$, if $\Delta_{L'}$ is of type
$(L(\vx), L(\vx+\vc))$, then $\vx=0$ or $-\vx_1$.
\end{itemize}
\end{prop}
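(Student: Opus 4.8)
The plan is to base everything on a single phase estimate for the pair $L,L(\vx_1)$ and then extract the triangles in (i)--(iii) from the defining short exact sequences. First I would note that on semistable line bundles the phase is strictly increasing in the $\bbL$-degree: if $\co(\vx),\co(\vy)$ are semistable with $\vx<\vy$, then $\Hom(\co(\vx),\co(\vy))\neq0$ while $\Hom(\co(\vy),\co(\vx))=0$, so Theorem~\ref{iff cond for finest} forces $\varphi(\co(\vx))<\varphi(\co(\vy))$; in particular $\varphi(L)<\varphi(L(\vx_1))$. The crux is the strict inequality $\varphi(L(\vx_1))>\varphi(L)+1$. Here I would exploit the non-split triangle $L(\vx_1)\to S_{1,L(\vx_1)}\to L[1]$ arising from $0\to L\to L(\vx_1)\to S_{1,L(\vx_1)}\to 0$, whose outer terms are (stable) semistable. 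Equality $\varphi(L(\vx_1))=\varphi(L)+1$ is impossible, since $\Hom(L(\vx_1),L[1])=\Ext^1(L(\vx_1),L)=0$ and $\Hom(L[1],L(\vx_1))=\Ext^{-1}(L,L(\vx_1))=0$ (both by Serre duality \eqref{serre duality} with $\vec{\omega}=-3\vx_1$), contradicting Theorem~\ref{iff cond for finest} for two objects of the same phase. If instead $\varphi(L(\vx_1))<\varphi(L)+1$, the Harder--Narasimhan order would place $L[1]$ above $L(\vx_1)$, forcing $S_{1,L(\vx_1)}$ to be an extension of $L(\vx_1)$ by $L[1]$ classified by $\Ext^2(L(\vx_1),L)=0$ (as $\coh\X$ is hereditary), hence split, against indecomposability.

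Statement (ii) is then immediate: the triangle $L(\vx_1)\to S_{1,L(\vx_1)}\to L[1]$ already lists its two semistable factors in strictly decreasing phase, so it is $\Delta_{S_{1,L(\vx_1)}}$. For (i) I would invoke Proposition~\ref{hn for s1i}, by which the final HN factor of $S_{1,L}$ is $L(k\vc-\vx_1)[1]$ $(k\in\bbZ)$, $S_{1,L(\vx_1)}[1]$, or $S_{1,L(\vx_1)}^{(2)}$. The factor $S_{1,L(\vx_1)}[1]$ is excluded because $S_{1,L(\vx_1)}$ is not semistable by (ii); and $S_{1,L(\vx_1)}^{(2)}$ is excluded because $\Hom(L(\vx_1),S_{1,L(\vx_1)}^{(2)})\neq0$ and the surjection $S_{1,L(\vx_1)}^{(2)}\to S_{1,L(\vx_1)}$ (with $\varphi^{+}(S_{1,L(\vx_1)})=\varphi(L(\vx_1))$ by (ii)) would force $\varphi(S_{1,L(\vx_1)}^{(2)})=\varphi(L(\vx_1))$, whence $\Hom(S_{1,L(\vx_1)}^{(2)},L(\vx_1))\neq0$ by Theorem~\ref{iff cond for finest}, impossible for a torsion sheaf mapping to a bundle. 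Among the line-bundle factors, $k\geq2$ contradicts $\varphi^{-}(S_{1,L})\leq\varphi(L(\vx_1))+1$ (from $\Hom(S_{1,L},L(\vx_1)[1])\neq0$) together with monotonicity, while for $k\leq-1$ I would use $\Delta_{S_{1,L}}\colon L(k\vc)\to S_{1,L}\to L(k\vc-\vx_1)[1]$ (Proposition~\ref{hn for s1i}): the composite $L\to S_{1,L}\to L(k\vc-\vx_1)[1]$ is nonzero (otherwise it factors through $\Hom(L,L(k\vc))=0$), giving $\varphi(L)\leq\varphi(L(k\vc-\vx_1))+1$, while $\Hom(L(k\vc),L)\neq0$ gives $\varphi^{-}(L(k\vc))\leq\varphi(L)$; these violate the Harder--Narasimhan inequality $\varphi^{-}(L(k\vc))>\varphi(L(k\vc-\vx_1))+1$ (cf. Lemma~\ref{no-existence}). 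Thus $k\in\{0,1\}$.

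For (iii), suppose $\Delta_{L'}$ is of type $(L(\vx),L(\vx+\vc))$, so that $L(\vx)$ and $L(\vx+\vc)$ are semistable. I would transfer the problem to an ordinary simple sheaf $S_{\lambda}$ ($\lambda\in\mathbb{H}_k$), which fits into $0\to L(\vx)\to L(\vx+\vc)\to S_{\lambda}\to0$ and is not semistable (else $\varphi(L(\vx_1))\leq\varphi(S_{\lambda})\leq\varphi(L)+1$, against the strict inequality). Since $\Hom(S_{\lambda},\co(\vy))=0$ and $S_{\lambda}$ has no maps to the other torsion simples, its final HN factor is a shifted line bundle; this sequence together with the splitting argument of the first paragraph identifies $\Delta_{S_{\lambda}}$ as $L(\vx+\vc)\to S_{\lambda}\to L(\vx)[1]$. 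Finally the nonzero maps $L(\vx_1)\to S_{\lambda}$ and $S_{\lambda}\to L[1]$ (the latter from $\Ext^1(S_{\lambda},L)\cong D\Hom(L,S_{\lambda}(\vec{\omega}))\cong D\Hom(L,S_{\lambda})\neq0$, using $S_{\lambda}(\vec{\omega})\cong S_{\lambda}$) give $\varphi(L(\vx_1))\leq\varphi^{+}(S_{\lambda})=\varphi(L(\vx+\vc))$ and $\varphi^{-}(S_{\lambda})=\varphi(L(\vx))+1\leq\varphi(L)+1$, whence $-\vx_1\leq\vx\leq0$ by monotonicity, i.e. $\vx\in\{0,-\vx_1\}$.

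The single genuinely hard point is the crux of the first paragraph. The dichotomy above cleanly disposes of the equal-phase case and of $\varphi(L(\vx_1))<\varphi(L)+1$ as long as the Harder--Narasimhan factors of $S_{1,L(\vx_1)}$ are forced to be exactly $L(\vx_1)$ and $L[1]$; the delicate possibility is that $S_{1,L(\vx_1)}$ (or a length-two tube sheaf $S_{1,L(\vx_1)}^{(2)}$) is itself stable of some phase strictly between $\varphi(L)$ and $\varphi(L)+1$, a configuration that the finest axiom alone does not forbid (its semistable subcategory is just $\langle S_{1,L(\vx_1)}\rangle$). Ruling this out seems to require feeding in the global hypothesis $d(\Phi)=\vx_1$ through Corollaries~\ref{quasimple} and~\ref{at most one semistable} and the tube extensions $\Ext^1(S_{1,L},S_{1,L(\vx_1)})\neq0\neq\Ext^1(S_{1,L(\vx_1)},S_{1,L})$, and this is the step I expect to demand the most care. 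Once the strict inequality is secured, (i)--(iii) follow by the formal Harder--Narasimhan bookkeeping sketched above.
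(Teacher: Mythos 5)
Your argument hinges entirely on the ``crux'' inequality $\varphi(L(\vx_1))>\varphi(L[1])$, and this inequality is simply false, so the gap you flag at the end is not a delicate point to be patched but a counterexample waiting to happen. Take the slope t-stability on $D^b(\coh\X)$ (all slope-semistable sheaves of slope $\mu$ in phase $(\mu,0)$, ordered so that every torsion sheaf sits above every line bundle and below every shifted line bundle) and refine it to a finest one, as the paper guarantees is possible. In any such refinement every line bundle stays semistable (each slope class contains a single line bundle), so $d(\Phi)=\vx_1$, and since the refining map $r$ is order-preserving one gets $\varphi(L)<\varphi(L(\vx_1))<\varphi(S_{1,L(\vx_1)})<\varphi(L[1])$ with $S_{1,L(\vx_1)}$ itself semistable. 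The precise error in your dichotomy is the step ``the Harder--Narasimhan order would place $L[1]$ above $L(\vx_1)$, forcing $S_{1,L(\vx_1)}$ to be an extension of $L(\vx_1)$ by $L[1]$'': nothing forces the HN factors of $S_{1,L(\vx_1)}$ to be drawn from $\{L(\vx_1),L[1]\}$ --- the object can be semistable outright. This invalidates your proof of (ii), your exclusion of the final factors $S_{1,L(\vx_1)}[1]$ and $S_{1,L(\vx_1)}^{(2)}$ in (i), and the non-semistability of $S_{\lambda}$ in (iii). Note also that the Proposition must be read under the standing convention of the subsection on final HN triangles ($\Delta_X$ is only discussed when $X$ is \emph{not} semistable); it is a conditional description, not an assertion that $S_{1,L}$, $S_{1,L(\vx_1)}$ or $L'$ fail to be semistable.

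The paper's proof avoids any comparison of $\varphi(L(\vx_1))$ with $\varphi(L[1])$: it takes the list of possible final triangles from Proposition \ref{hn for s1i} and kills each unwanted case by Lemma \ref{no-existence}, exhibiting a semistable witness $W$ with $\Hom(Y,W)\neq 0$ and $\varphi(W)<\varphi(Z)$. For $k<0$ one takes $W=L$, for $k>1$ one takes $W=L(\vx_1)[1]$; for the triangle $S_{1,L}^{(2)}\to S_{1,L}\to S_{1,L(\vx_1)}[1]$ one takes $W=L(\vx_1)[1]$ (using $\Ext^1(S_{1,L}^{(2)},L(\vx_1))\neq 0$ by Serre duality and $\varphi(L(\vx_1))<\varphi(S_{1,L(\vx_1)})$), and for $S_{1,L(\vx_1)}[-1]\to S_{1,L}\to S_{1,L(\vx_1)}^{(2)}$ one takes $W=L$. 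Your handling of the ranges $k\leq -1$ and $k\geq 2$ in (i) is essentially this argument and is correct; the portions of your proof that genuinely diverge from the paper --- the global phase inequality and everything built on it --- are the portions that fail.
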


\begin{proof} (i) By Proposition \ref{hn for s1i}, the final HN triangle of $S_{1,L}$ has three possibilities.
Firstly, we assume $\Delta_{S_{1,L}}$ has the form $L(k\vc)\to
S_{1,L}\to L(k\vc-\vx_1)[1]$ for some $k\in\bbZ$. We use Lemma
\ref{no-existence} to deduce contradictions for $k\neq 0,1$. More
precisely, if $k<0$, then $\Hom(L(k\vc), L)\neq 0$ and
$\Hom(L,L(k\vc-\vx_1)[1])\neq 0$, it follows that
$\varphi(L)<\varphi(L(k\vc-\vx_1)[1])$. By Lemma \ref{no-existence},
we can take $W=L$ to get a contradiction. Similarly, if $k>1$, then
we take $W=L(\vx_1)[1]$ to get a contradiction. Moreover, we can
take $W=L(\vx_1)[1]$ if $\Delta_{S_{1,L}}$ has the form
$S_{1,L}^{(2)}\to S_{1,L}\to S_{1,L(\vx_1)}[1]$, and take $W=L$ if
$\Delta_{S_{1,L}}$ has the form $S_{1,L(\vx_1)}[-1] \to S_{1,L}\to
S_{1,L(\vx_1)}^{(2)}$ to get contradictions. We are done.

The proofs for (ii) and (iii) are similar as for (1) by using Lemma
\ref{no-existence}. We omit the details.
\end{proof}

\begin{prop}\label{c} Assume $d(\Phi)=\vc$ with $L, L(\vc)$ semistable for some line bundle $L$. Then
\begin{itemize}
  \item [(i)] $\Delta_{S_{1,L}}$ has the form $L(\vc)\to S_{1,L}\to
L(\vx_1)[1]$ or $S_{1,L}^{(2)}\to S_{1,L}\to S_{1,L(\vx_1)}[1];$
  \item [(ii)] $\Delta_{S_{1,L(\vx_1)}}$ has the form $L(\vx_1)\to
S_{1,L(\vx_1)}\to L[1]$ or $S_{1,L}[-1] \to S_{1,L(\vx_1)}\to
S_{1,L}^{(2)}$;
  \item [(iii)] for any line bundle $L'$, if $\Delta_{L'}$ is of type
$(L(\vx), L(\vx+\vc))$, then $\vx=0$.
\end{itemize}
\end{prop}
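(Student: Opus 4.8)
The plan is to prove the three items by exactly the case-elimination scheme used for Proposition~\ref{x}, now feeding in the data $d(\Phi)=\vc$ with $L,L(\vc)$ semistable. In each case I would first write down the finite list of a priori possible final HN triangles supplied by the structural results of this subsection, and then delete every shape that is incompatible with $d(\Phi)=\vc$. Two tools do the deleting: Lemma~\ref{no-existence}, which forbids a semistable $W$ with $\Hom(Y,W)\neq 0$ and $\varphi(W)<\varphi(Z)$; and the minimality built into the definition of $d(\Phi)$, which says no semistable line bundle can have degree strictly between those of two semistable bundles lying at distance $\vc$. Throughout I would use $\vc=2\vx_1$ and $\vw=-\vc-\vx_1$, together with $\Hom(\co(\vx),\co(\vy))\cong S_{\vy-\vx}$ and Serre duality~\eqref{serre duality}, to decide the relevant non-vanishings.

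For (i) and (ii) I would invoke Proposition~\ref{hn for s1i}, which offers three candidate shapes for $\Delta_{S_{1,L}}$ (respectively $\Delta_{S_{1,L(\vx_1)}}$): a line-bundle shape $L(k\vc)\to S_{1,L}\to L(k\vc-\vx_1)[1]$, a shape with final factor $S_{1,L(\vx_1)}[1]$, and a shape with final factor $S_{1,L(\vx_1)}^{(2)}$. The decisive structural observation is that the final HN factor of the line-bundle shape is a twist of $L$ by an \emph{odd} multiple of $\vx_1$ for $S_{1,L}$, but by an \emph{even} multiple for $S_{1,L(\vx_1)}$; since $d(\Phi)=\vc$ forces every semistable line bundle to lie in the coset of $L$ modulo $\vc$, this parity immediately restricts which $k$ (if any) can occur. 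The remaining shapes and residual values of $k$ are then killed exactly as in Proposition~\ref{x} by a semistable witness $W$ among the shifts of $L$ and $L(\vc)$: for instance $W=L$ disposes of the line-bundle shape for $k\le -1$ via $\Ext^1(L,L(k\vc-\vx_1))\cong DS_{-(k+1)\vc}\neq 0$ together with $\Hom(L(k\vc),L)\cong S_{-k\vc}\neq 0$, while $W=L(\vc)[1]$ (the analogue of the $L(\vx_1)[1]$ used in Proposition~\ref{x}) handles the large-$k$ tail.

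For (iii) I would start from Proposition~\ref{hn for line bundle} and the remark after it: if $\Delta_{L'}$ is of type $(L(\vx),L(\vx+\vc))$ then both $L(\vx)$ and $L(\vx+\vc)$ are semistable and $L(\vx+\vc)[j-1]$ is a direct summand of the penultimate term $Y$. Minimality of $d(\Phi)$ forces $\vx\in\bbZ\vc$, say $\vx=b\vc$, and it then remains to rule out $b\neq 0$. I would do this with Lemma~\ref{no-existence}: for $b>0$ take $W=L[j]$, using that $L(\vx+\vc)[j-1]$ is a summand of $Y$ with $\Ext^1(L((b+1)\vc),L)\cong DS_{b\vc-\vx_1}\neq 0$ and that $\varphi(L)<\varphi(L(\vx))$ because $L$ has the smaller degree; for $b<0$ take the symmetric witness $W=L(\vc)[j-1]$, where the same Serre-duality computation gives $\Hom(Y,W)\neq 0$ and $\varphi(W)<\varphi(Z)$. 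Either choice contradicts Lemma~\ref{no-existence}, leaving only $\vx=0$.

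The hard part will be the witness bookkeeping. For each forbidden shape I must exhibit a single $W$ meeting two competing requirements at once: $\Hom(Y,W)\neq0$ pushes the degree of $W$ one way, while $\varphi(W)<\varphi(Z)$ pushes its phase the other, so the correct shift of $L$ or $L(\vc)$ has to be chosen and then verified by a Serre-duality degree computation. Once the normal form $\vc=2\vx_1$, $\vw=-\vc-\vx_1$ is in place these computations are routine; the genuinely delicate point is keeping straight the parity of the final twist, since that is exactly what distinguishes the admissible final HN triangles of $S_{1,L}$ from those of $S_{1,L(\vx_1)}$ and, together with the witness arguments, dictates which of the listed forms actually occurs.
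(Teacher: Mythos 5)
Your overall scheme---listing the candidate final HN triangles supplied by Propositions~\ref{hn for line bundle} and~\ref{hn for s1i} and deleting cases with Lemma~\ref{no-existence}, using witnesses $W$ chosen among shifts of $L$ and $L(\vc)$---is exactly the paper's strategy for Proposition~\ref{c} (the paper takes $W=L(\vc)$ for $k\le 0$, $W=L[1]$ for $k>1$, and $W=L$ for the third shape in (i), and declares (ii), (iii) similar). The witness computations you spell out check out, and the portion of your ``minimality'' observation that is actually valid (no line bundle at distance $\vx_1$ from $L$ or $L(\vc)$ can be semistable, since $d(\Phi)=\vc$) correctly disposes of $k\in\{0,1,2\}$ in part (i); together with $W=L$ for $k\le -1$ and $W=L(\vc)[1]$ for $k\ge 3$ this covers all $k$ there.

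The gap is the assertion that $d(\Phi)=\vc$ forces every semistable line bundle into the coset of $L$ modulo $\vc$. This does not follow from the definition of $d(\Phi)$ as a minimum: it only rules out a semistable line bundle at distance $<\vc$ from another semistable one, so for instance $L(5\vx_1)$ or $L(-3\vx_1)$, which lie at distance $\ge\vc$ from both $L$ and $L(\vc)$, are not excluded. In (i)/(ii) the overstatement is harmless because your witnesses pick up every $k$ outside $\{0,1,2\}$ anyway; but in (iii) you use the coset claim to reduce to $\vx=b\vc$ \emph{before} running the witness argument, so the cases $\vx=(2m+1)\vx_1$ with $m\ge 2$ or $m\le -2$ are never treated. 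They can be handled by the same method, e.g.\ for $\vx\ge 5\vx_1$ take $W=L(\vc)[j]$, using $\Hom\big(L(\vx+\vc)[j-1],L(\vc)[j]\big)\cong\Ext^1(L(\vx+\vc),L(\vc))\cong DS_{\vx-\vx_1-\vc}\neq 0$ and $\Hom(L(\vc),L(\vx))\neq 0$ with $\Hom(L(\vx),L(\vc))=0$; but as written your proof of (iii) is incomplete. A minor further remark: your argument for (i), carried out correctly, also eliminates $k=1$ (semistability of the final factor $L(\vx_1)[1]$ would force $d(\Phi)\le\vx_1$), so it proves something strictly stronger than the stated disjunction, which is logically harmless.
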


\begin{proof} Assume $\Delta_{S_{1,L}}$ has the form $L(k\vc)\to S_{1,L}\to L(k\vc-\vx_1)[1]$ for some $k\in\bbZ$. If $k\leq 0$, then $\Hom(L(k\vc), L(\vc))\neq 0$ and $\Hom(L(\vc),L(k\vc-\vx_1)[1])\neq 0$, by taking $W=L(\vc)$ we get a contradiction to Lemma \ref{no-existence}. If $k>1$, then we take $W=L[1]$ to get a contradiction. Hence $k=1$ and then $\Delta_{S_{1,L}}$ has the form $L(\vc)\to S_{1,L}\to L(\vx_1)[1]$.
Now assume $\Delta_{S_{1,L}}$ has the form $S_{1,L(\vx_1)}[-1] \to
S_{1,L}\to S_{1,L(\vx_1)}^{(2)}$, then we can take $W=L$ to get a
contradiction. This finishes the proof of (i) by Proposition \ref{hn
for s1i}. Similarly, we can prove (ii) and (iii).
\end{proof}

\subsection{t-exceptional sequences}

In this subsection we will introduce the notion of a t-exceptional
sequence in a triangulated category $\cal C$, and show the existence
for $\mathcal{D}=D^b(\coh\X)$, where $\X$ is the weighted projective
line of weight type (2).

\begin{defn} Let $(\Phi,\{\Pi_{\varphi}\}_{\varphi\in\Phi})$ be a t-stability on a triangulated category $\mathcal{C}$. An exceptional sequence $E=(E_0,E_1,\cdots,E_n)$ is called a t-exceptional sequence if:

\begin{itemize}
  \item [(i)] $E_i$ is semistable, $i=0,\cdots,n$;
  \item [(ii)] $\Hom^{\leq 0}(E_i,E_j)=0$ for $0\leq i< j\leq n$;
  \item [(iii)] there exists $\varphi_0\in\Phi$ such that $\varphi(E_i)\in (\varphi_0,\tau_{\Phi}(\varphi_0)]$ for $0\leq i\leq n$.
\end{itemize}
\end{defn}

\begin{defn} A t-stability $(\Phi,\{\Pi_{\varphi}\}_{\varphi\in\Phi})$ on $\mathcal{C}$ is said to be \emph{effective} if for any two semistable objects $X, Y$, there exists $i\in\mathbb{Z}$ such that $\varphi(X[i])\leq\varphi(Y)<\varphi(X[i+1])$.
\end{defn}

Now we give the main result of this section.

\begin{thm}\label{main theorem} Let $(\Phi,\{\Pi_{\varphi}\}_{\varphi\in\Phi})$ be an effective finest t-stability on $\mathcal{D}=D^b(\coh\X)$. Then there exists a t-exceptional triple.
\end{thm}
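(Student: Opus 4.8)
The plan is to exhibit, for each admissible value of the distance $d(\Phi)$, an explicit complete exceptional sequence of \emph{semistable} sheaves, and then to normalise it by shifts into a single fundamental domain. Since $\mathcal{D}=D^b(\coh\X)$ has rank three, a complete exceptional sequence is a triple, so it suffices to produce three semistable exceptional objects satisfying conditions (i)--(iii).

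First I would locate $d(\Phi)$. By the lemma guaranteeing at least two semistable line bundles (up to shift) and by Proposition \ref{ss-line-bundle} we have $d(\Phi)\le\vc$; as $\bbL=\bbZ\vx_1$ with $\vc=2\vx_1$, the element $\vx_1$ is the smallest positive one, so necessarily $d(\Phi)=\vx_1$ or $d(\Phi)=\vc$. Fix a line bundle $L$ with $L$ and $L(d(\Phi))$ both semistable.

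Next comes the construction of the triple, split into the two cases and, inside each, according to whether $S_{1,L}$ is semistable. If $d(\Phi)=\vx_1$: when $S_{1,L}$ is semistable I take $(L,S_{1,L},L(\vx_1))$, a twist of the type (iii) sequence of Proposition \ref{exceptional pair and triple}; when $S_{1,L}$ is not semistable, Proposition \ref{x}(i) gives $\Delta_{S_{1,L}}\colon L(k\vc)\to S_{1,L}\to L(k\vc-\vx_1)[1]$ with $k\in\{0,1\}$, and since both outer terms are semistable I obtain three consecutive semistable line bundles ($L(-\vx_1),L,L(\vx_1)$ for $k=0$, or $L,L(\vx_1),L(\vc)$ for $k=1$), i.e. a type (i) triple. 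If $d(\Phi)=\vc$: when $S_{1,L}$ is semistable I take $(L,L(\vc),S_{1,L})$, a twist of type (ii); when $S_{1,L}$ is not semistable, the first form in Proposition \ref{c}(i) is impossible (its final HN factor $L(\vx_1)[1]$ would make $L(\vx_1)$ semistable, forcing $d(\Phi)=\vx_1$), so $\Delta_{S_{1,L}}\colon S_{1,L}^{(2)}\to S_{1,L}\to S_{1,L(\vx_1)}[1]$, whose final factor forces $S_{1,L(\vx_1)}$ to be semistable; then $(S_{1,L(\vx_1)},L,L(\vc))$, a twist of type (iv), is the desired sequence. In every branch I end up with a complete exceptional sequence $(E_0,E_1,E_2)$ of semistable sheaves, which gives condition (i).

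For condition (ii) I would pass to the Ext-exceptional normal form of Remark \ref{excep}: since shifts preserve both semistability and exceptionality, replacing the $E_i$ by $(\widetilde{E_0},\widetilde{E_1},\widetilde{E_2})$ as in \eqref{ext-exc law} makes $\Hom^{\leq 0}(E_i,E_j)=0$ for $i<j$ while retaining the nonzero groups $\Ext^1(E_0,E_1)$ and $\Ext^1(E_1,E_2)$. It remains to verify condition (iii). Because $\tau_{\Phi}$ is strictly increasing (Lemma \ref{strictly increasing}), the phases lie in some domain $(\varphi,\tau_{\Phi}(\varphi)]$ precisely when $\max_i\varphi(E_i)<\tau_{\Phi}(\min_i\varphi(E_i))$ (take $\varphi=\tau_{\Phi}^{-1}(\max_i\varphi(E_i))$). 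Axiom (ii) of the t-stability applied to the two nonzero $\Ext^1$'s yields the upper bounds $\varphi(E_0)\le\tau_{\Phi}(\varphi(E_1))$ and $\varphi(E_1)\le\tau_{\Phi}(\varphi(E_2))$; the remaining task is to bound the spread from the other side. Here I would invoke effectiveness: applied to the relevant pairs among the $E_i$ it sandwiches each phase between two consecutive shifts of another, and together with the vanishing $\Hom^{\leq 0}(E_i,E_j)=0$ and Lemma \ref{12to02} this should pin down the shift indices and exclude the phases from straddling more than one period. \textbf{This last step is the main obstacle:} converting the qualitative effectiveness hypothesis into the sharp inequality $\max_i\varphi(E_i)<\tau_{\Phi}(\min_i\varphi(E_i))$, for which I expect to need the explicit final HN triangles of Propositions \ref{x} and \ref{c} together with Lemma \ref{no-existence} to control the phases of the exceptional simple sheaves interpolating between the chosen line bundles.
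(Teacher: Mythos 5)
Your overall strategy is the same as the paper's: bound $d(\Phi)$ by Proposition \ref{ss-line-bundle}, split into the cases $d(\Phi)=\vx_1$ and $d(\Phi)=\vc$, extract from the final HN triangles of Propositions \ref{x} and \ref{c} a complete exceptional sequence consisting of semistable objects, and then shift into one fundamental domain using effectiveness together with Remark \ref{excep} and Example \ref{kij}. The case division is organized slightly differently (you branch on whether $S_{1,L}$ is semistable, the paper branches on the shape of $\Delta_{\co(\vx_1)}$ resp.\ $\Delta_{S_{1,0}}$), but the three objects you end up with are the same ones the paper uses.

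However, there is a genuine gap, and it sits exactly where you flag ``the main obstacle.'' Producing three semistable objects forming a complete exceptional sequence is the easy half; the substance of the theorem is that the shifts can be chosen to satisfy conditions (ii) and (iii) \emph{simultaneously}. Your plan --- first pass to the Ext-exceptional normal form of Remark \ref{excep} to secure (ii), then ``verify (iii)'' --- does not work in that order: the normal form $(\widetilde{E_0},\widetilde{E_1},\widetilde{E_2})$ has no reason to have phases within one period, and re-shifting to fix the phases can destroy Ext-exceptionality unless the resulting shift indices satisfy $k_0\geq k_1\geq k_2$ as in \eqref{ext-exc law}. The paper's proof goes the other way: effectiveness pins down \emph{unique} integers $m,n$ placing the phases in $(\varphi(\co),\varphi(\co[1]))$, and then the phase inequalities read off from the final HN triangles (e.g.\ $\varphi(\co(\vc))>\varphi(\co)>\varphi(S_{1,1})$ in case (1)(i), or $\varphi(S_{1,0}[-1])>\varphi(\co(\vc))$ in case (1)(ii), the latter requiring a separate two-step HN-filtration argument) are used to prove $m\leq 0<n$, $n+1\leq m\leq 0$, etc., which is precisely the ordering needed for Ext-exceptionality via Example \ref{kij}. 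None of these inequalities is derived in your proposal; without them the triple you name may fail (ii) or (iii).

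A second, smaller gap: in the subcase $d(\Phi)=\vx_1$, $k=1$, you assert that ``both outer terms'' of $\Delta_{S_{1,L}}\colon L(\vc)\to S_{1,L}\to L(\vx_1)[1]$ are semistable. In a final HN triangle $Y\to X\to Z$ only $Z$ is automatically semistable; $Y$ is the penultimate term of the filtration. The semistability of $L(\vc)$ here requires the paper's separate argument (assume not, write out $\Delta_{\co(\vc)}$, splice the filtrations, and contradict $\Hom^{-1}(\co(\vx_1),\co(\vx_1)[1])\neq 0$). The analogous point arises wherever you read semistability off the left-hand term of a final HN triangle.
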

\begin{proof} By Proposition \ref{ss-line-bundle}, we have $d(\Phi)\leq \vc$. We consider the following two cases.

(1) $d(\Phi)=\vc$. Without loss of generality, we assume that
$\co,\co(\vc)$ are semistable. Then $\co(\vx_1)$ is not semistable.
By Proposition \ref{c}, $\triangle _{\co(\vx_1)}$ has the following
two possible forms
     \begin{itemize}
     \item[(i)] $\co\to \co(\vx_1)\to S_{1,1}$;
     \item[(ii)] $S_{1,0}[-1]\to\co(\vx_1)\to \co(\vc).$
     \end{itemize}

For Case (i), we obtain that $S_{1,1}$ is semistable and
$\varphi(\co(\vc))>\varphi(\co)>\varphi(S_{1,1})$. By the
effectivity of t-stability, there exist unique $m$ and $n$ such that
$\varphi(\co(\vc)[m])$ and $\varphi(S_{1,1}[n])$ belong to the
interval $(\varphi(\co), \varphi(\co[1]))$. Clearly, $m\leq 0<n$.
Then by Remark \ref{excep} and Example \ref{kij} we
conclude that $(S_{1,1}[n],\co[1],\co(\vc)[m])$ is a t-exceptional
triple.

For Case (ii), we claim that $S_{1,0}$ is semistable and then
$\varphi(S_{1,0}[-1])>\varphi(\co(\vc))$. Indeed, if
$S_{1,0}$ is not semistable, then by Proposition \ref{c}, $\triangle_{S_{1,0}}$ has the form $S^{(2)}_{1,0}\rightarrow
S_{1,0}\rightarrow S_{1,1}[1]$. This implies that the last two triangles of the
HN-filtration of $\co(\vx_1)$ have the form
$$\xymatrix{
 S^{(2)}_{1,0}[-1] \ar[r] \ar@{.}[rd]& S_{1,0}[-1]\ar@{.}[rd]\ar[r]\ar[d]& \co(\vx_1) \ar[d]\\
  &S_{1,1}& \co(\vc).}
$$
Then $\Hom(S^{(2)}_{1,0}[-1], \co(\vc))\neq 0$ yields a
contradiction. By similar arguments as for Case (i), there exist
unique $m$ and $n$ with  $m,n\leq 0$ such that
$\varphi(\co(\vc)[m])$ and $\varphi(S_{1,0}[n])$ belong to the
interval $(\varphi(\co), \varphi(\co[1]))$. Then
$\varphi(\co(\vc)[m])<\varphi(S_{1,0}[n])$ or
$\varphi(S_{1,0}[n])<\varphi(\co(\vc)[m])<\varphi(S_{1,0}[n+1])$.
Recall that $\varphi(S_{1,0}[-1])>\varphi(\co(\vc))$. It follows
that $n+1\leq m\leq 0$ or $n+2\leq m\leq 0$ respectively. In both
cases, $(\co[1],\co(\vc)[m],S_{1,0}[n])$ is a t-exceptional triple
by Remark \ref{excep} and Example \ref{kij}.

\medskip

(2) $d(\Phi)=\vx_1$. Without loss of generality, we assume that
$\co,\co(\vx_1)$ are semistable. If $S_{1,0}$ is semistable, then
there exist unique $m$ and $n$ with $m,n\leq 0$ such that
$\varphi(\co(\vx_1)[m]),\varphi(S_{1,0}[n])\in(\varphi(\co),
\varphi(\co[1]))$. Since $\Hom(S_{1,0},\co(\vx_1)[1])\neq 0$, we
have $\varphi(S_{1,0})<\varphi(\co(\vx_1)[1])$. If
$\varphi(\co(\vx_1)[m])<\varphi(S_{1,0}[n])$, then $m\leq n$; and if
$\varphi(\co(\vx_1)[m-1])<\varphi(S_{1,0}[n])<\varphi(\co(\vx_1)[m])$,
then $m\leq n+1$. Therefore,
$$\varphi(\co)<\varphi(\co(\vx_1)[m])<\varphi(S_{1,0}[n])<\varphi(\co[1]),m\leq n\leq 0,$$
or
$$\varphi(\co)<\varphi(S_{1,0}[n])<\varphi(\co(\vx_1)[m])<\varphi(\co[1]),m\leq
n+1\leq 0.$$ We conclude that
$(\co,S_{1,0}[n-1],\co(\vx_1)[m-1])$ or
$(\co,S_{1,0}[n],\co(\vx_1)[m-1])$ is a t-exceptional triple
respectively.

Now assume $S_{1,0}$ is not semistable. Then $\triangle_{S_{1,0}}$
has the following two possible forms
     \begin{itemize}
     \item[(i)] $\co(\vc)\rightarrow S_{1,0}\rightarrow \co(\vx_1)[1]$;
     \item[(ii)] $\co\rightarrow S_{1,0}\rightarrow \co(-\vx_1)[1].$
     \end{itemize}

For Case (i), $\co(\vc)$ must be semistable with
$\varphi(\co(\vc))>\varphi(\co(\vx_1)[1])$. Otherwise,
$\triangle_{\co(\vc)}$ is given by $\co(\vx_1)\oplus \co\rightarrow
\co(\vc)\rightarrow \co(-\vx_1)[1]$. Hence, the final two triangles
of the HN-filtration of $S_{1,0}$ have the form
$$\xymatrix{
 \co(\vx_1)\oplus \co \ar[r] \ar@{.}[rd]& \co(\vc)\ar@{.}[rd]\ar[r]\ar[d]& S_{1,0} \ar[d]\\
  &\co(-\vx_1)[1]& \co(\vx_1)[1].}
$$
Then $\Hom^{-1}(\co(\vx_1),\co(\vx_1)[1])\neq 0$ yields a
contradiction. Therefore, there exist $m,n$ such that
$$\varphi(\co)<\varphi(\co(\vc)[n])<\varphi(\co(\vx_1)[m])<\varphi(\co[1]),n+2\leq m\leq 0,$$
or
$$\varphi(\co)<\varphi(\co(\vx_1)[m])<\varphi(\co(\vc)[n])<\varphi(\co[1]),n+1\leq
m\leq 0.$$ In both cases, $(\co[1],\co(\vx_1)[m],\co(\vc)[n])$ is a
t-exceptional triple.

For Case (ii), $\co(-\vx_1)$ is semistable with
$\varphi(\co)>\varphi(\co(-\vx_1)[1])$. Hence, there exist $m,n$
such that
$$(i)\varphi(\co(-\vx_1))<\varphi(\co[m])<\varphi(\co(\vx_1)[n])<\varphi(\co(-\vx_1)[1]), n\leq m\leq -1,$$
or
$$(ii)\varphi(\co(-\vx_1))<\varphi(\co(\vx_1)[n])<\varphi(\co[m])<\varphi(\co(-\vx_1)[1]),n+1\leq
m\leq -1.$$ In both cases,
$(\co(-\vx_1),\co[m],\co(\vx_1)[n-\delta_{m,n}])$ is a t-exceptional
triple.

\end{proof}

\section{Relations with Bridgeland's stability conditions}

In this section we use t-stabilities to investigate the
stability conditions in the sense of Bridgeland. By Proposition
\ref{canonical tilting}, there is a derived equivalence between the
category $\coh \X$ of coherent sheaves over the weighted projective
line $\X$ of weight type $(2)$ and the module category $\mod kQ$ for
the acyclic triangular quiver $Q$. The aim of this section is to apply the results in the previous section to derive the existence of
$\sigma$-exceptional triple for each stability condition $\sigma$ on
$D^b(\mod kQ)$, which is the main result in \cite{DK} and implies
the connectedness of the space of stability conditions on $D^b(\mod
kQ)$.

We first recall the definition of stability conditions on a
triangulated category $\mathcal{C}$ introduced by Bridgeland
\cite{TB}.

\begin{defn}\label{stab cond on tri cat} A stability condition $\sigma=(Z,\mathcal{P})$ on $\mathcal{C}$ consists of a group homomorphism
 $Z:K_0(\mathcal{C})\rightarrow\mathbb{C}$, called the \emph{central charge}, and full additive
 subcategories
$\mathcal{P}(\phi)$ of $\mathcal{C}$ for each $\phi\in\mathbb{R}$,
satisfying the following axioms:

\begin{itemize}
  \item [(i)] if $E\in\mathcal{P}(\phi)$, then
$Z(E)=m(E)\rm{exp}(i\pi\phi)$ for some $m(E)\in\mathbb{R}_{>0}$;
  \item [(ii)] for all $\phi\in\mathbb{R}$, $\mathcal{P}(\phi+1)=\mathcal{P}(\phi)[1]$;
  \item [(iii)] if $\phi _1>\phi _2$ and $A_j\in\mathcal{P}(\phi _j)$,
then $\Hom(A_1,A_2)=0$;
  \item [(iv)] for each $0\neq E\in  \mathcal{C}$, there are a finite sequence of real numbers
$\phi _1>\phi _2>\cdots>\phi _n$ and a sequence of triangles
$$\xymatrix { 0=E_0\ar[rr] &&E_1\ar[dl]\ar[rr]&&E_2\ar[dl]\ar[r]&\cdots\ar[r]&E_{n-1}\ar[rr]&&E_n=E\ar[dl]\\
&A_1\ar@{-->}[ul]&&A_2\ar@{-->}[ul]&&&&A_n\ar@{-->}[ul] }$$ with
$A_j\in\mathcal{P}(\phi _j)$ for $1\leq j\leq n$.
\end{itemize}
\end{defn}

The decompositions in axiom (iv) are uniquely defined up to
isomorphism. Given a nonzero object $0\neq E\in \mathcal{C}$, define
real numbers $\phi^{-}(E):=\phi_1$ and $\phi^{+}(E):=\phi_n$. Then
$E\in P(\phi)$ if and only if
$\phi^{-}(E)=\phi^{+}(E)=\phi=:\phi(E)$. Each subcategory
$\mathcal{P}(\phi)$ is extension-closed and abelian. Its non-zero
objects are said to be semistable of phase $\phi$ with respect to
$\sigma$, and its minimal objects are called stable.

For each interval $I\subseteq \mathbb{R}$, define
$\mathcal{P}(I):=\langle \mathcal{P}(\phi)|\phi\in I\rangle$ to be
the extension-closed subcategory of $\mathcal{C}$ generated by
$\mathcal{P}(\phi)$ for $\phi\in I$. It has been shown by Bridgeland
that for $\phi\in \mathbb{R}$, both $\mathcal{P}(\phi,\phi+1]$ and
$\mathcal{P}[\phi,\phi+1)$ are hearts of bounded t-structures on
$\mathcal{C}$.

In the following we always fix a stability condition
$\sigma=(Z,\mathcal{P})$ on $\mathcal{C}$. Now we recall the definition of
$\sigma$-exceptional sequence, which was first introduced in \cite{DK}.

\begin{defn} An exceptional sequence $\mathcal{E}=(E_0,E_1,\cdots,E_n)$ is called a  $\sigma$-exceptional sequence if

\begin{itemize}
  \item [(i)] $E_i$ is semistable, $i=0,\cdots,n$;
  \item [(ii)] $\Hom^{\leq 0}(E_i,E_j)=0,\forall\; 0\leq i< j\leq n$;
  \item [(iii)] there exists $\phi\in\mathbb{R}$ such that $\phi(E_i)\in(\phi,\phi+1]$ for each $i$.
\end{itemize}
\end{defn}

\begin{rem}
\noindent (1) The condition (iii) is equivalent to the following condition
\begin{itemize}\item [-] there exists $\phi\in\mathbb{R}$ such that $\phi(E_i)\in[\phi,\phi+1)$ for each $i$.
\end{itemize}
(2) If the condition (iii) is replaced by the condition
\begin{itemize}
  \item[-] there exists $\phi\in\mathbb{R}$ such that $\phi(E_i)\in[\phi,\phi+1]$ for each $i$,
\end{itemize}
then $\mathcal{E}$ is called a \emph{weakly $\sigma$-exceptional
sequence}.
\end{rem}

\medskip

For the later use, we give the following two general results.

\begin{lem}\label{ker and coker for ss}
Let $X\xrightarrow{f}Y\to Z$ be a triangle in $\mathcal{C}$ with
$X,Y\in\mathcal{P}(\phi)$ and $Z$ semistable. Then $\phi(Z)=\phi$ or
$\phi+1$.
\end{lem}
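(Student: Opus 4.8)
The plan is to reduce the statement to the uniqueness of Harder--Narasimhan filtrations. Since each $\mathcal{P}(\phi)$ is abelian (as recorded after Definition \ref{stab cond on tri cat}), the morphism $f\colon X\to Y$ admits a kernel $K$, an image $I$ and a cokernel $C$, all lying in $\mathcal{P}(\phi)$, and $f$ factors as $X\twoheadrightarrow I\hookrightarrow Y$. The two short exact sequences $0\to K\to X\to I\to 0$ and $0\to I\to Y\to C\to 0$ in $\mathcal{P}(\phi)$ give rise to distinguished triangles in $\mathcal{C}$, whose cones are $K[1]$ and $C$ respectively.

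Next I would feed the composite $X\to I\to Y$ into the octahedral axiom. Since the cone of $X\to I$ is $K[1]$ and the cone of $I\to Y$ is $C$, while the cone of the composite $f$ is $Z$, the octahedral axiom produces a distinguished triangle
$$K[1]\longrightarrow Z\longrightarrow C\longrightarrow K[2],$$
identifying the cone $Z$ of $f$. Here $C\in\mathcal{P}(\phi)$ and $K[1]\in\mathcal{P}(\phi+1)$ by axiom (ii) of Definition \ref{stab cond on tri cat}.

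Now I would invoke the semistability of $Z$. If both $K$ and $C$ were nonzero, the triangle above would be precisely a Harder--Narasimhan filtration of $Z$ whose two semistable factors have phases $\phi+1$ and $\phi$; since $\phi+1>\phi$, the uniqueness of the HN filtration would exhibit $Z$ as a nontrivial extension with $\phi^{-}(Z)=\phi+1>\phi=\phi^{+}(Z)$, contradicting the assumption that $Z$ is semistable. As $Z$ is nonzero, $f$ is not an isomorphism, so $K$ and $C$ cannot both vanish. Hence exactly one of them is zero: if $K=0$ then $Z\cong C\in\mathcal{P}(\phi)$ and $\phi(Z)=\phi$, while if $C=0$ then $Z\cong K[1]\in\mathcal{P}(\phi+1)$ and $\phi(Z)=\phi+1$.

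The only delicate point is the bookkeeping in the octahedral step: one must check that the cones and the orientation of the resulting triangle are identified correctly, so that $K[1]$ really plays the role of the higher-phase subobject and $C$ that of the lower-phase quotient in the would-be filtration. Once this is pinned down, the conclusion is immediate from the uniqueness of the HN decomposition; no hereditary hypothesis (and in particular no splitting $Z\cong C\oplus K[1]$ as in Lemma \ref{kernel and cokernel}) is needed.
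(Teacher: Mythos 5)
Your proof is correct and follows essentially the same route as the paper's: factor $f$ through its image in the abelian category $\mathcal{P}(\phi)$, apply the octahedral axiom to the composite $X\to \mathrm{Im}(f)\to Y$ to obtain the triangle $\ker(f)[1]\to Z\to\coker(f)$, and then use the semistability of $Z$ to exclude the mixed case. The only cosmetic difference is the concluding step: the paper kills one of the two maps in that triangle using $\Hom(\mathcal{P}(\phi_1),\mathcal{P}(\phi_2))=0$ for $\phi_1>\phi_2$ and then identifies $Z$ with $\coker(f)$ or $\ker(f)[1]$, whereas you invoke the uniqueness of the Harder--Narasimhan filtration directly; both are immediate and your bookkeeping of the octahedron's orientation is accurate.
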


\begin{proof} Consider the following commutative diagram in $\mathcal{P}(\phi)$:
$$\xymatrix@C=0.5cm{
 0 \ar[rr] && \ker(f) \ar[rr] && X \ar[rr]^{f} \ar[rd]^{\pi} && Y  \ar[rr]&& \coker(f)  \ar[rr] &&0 \\
 && && & \rm{Im}(f) \ar[ru]^{i}. }$$
 By the Octahedral Axiom, we obtain the following commutative diagram of triangles:
 $$\xymatrix@C=0.5cm{
 X \ar[r]^{\pi}\ar@{=}[d] & \rm{Im}(f) \ar[r]\ar[d]^{i} & \ker(f)[1]\ar[d]^{u} \\
 X \ar[r]^{f} & Y \ar[r]\ar[d] & Z\ar[d]^{v} \\
& \coker(f) \ar@{=}[r]  & \coker(f).
 }$$
 Note that each term above is semistable and $\phi(\ker(f))=\phi(\coker(f))=\phi$. Recall that $\Hom(\mathcal{P}(\phi _1),\mathcal{P}(\phi _2))=0$ for $\phi _1>\phi _2$.
 It follows that $u=0$ or $v=0$, that is, $Z\cong \coker(f)$ or $Z\cong \ker(f)[1]$. Therefore, $\phi(Z)=\phi$ or $\phi+1$.
\end{proof}

\begin{lem}\label{mutation semistable}
Let $(E_1, E_2)$ be an exceptional pair with $\dim\Hom(E_1,
E_2[i])=1$ or 2 for some $i\in\mathbb{Z}$. If
$\phi(E_1)=\phi(E_2[i])=\phi$, then
\begin{itemize}
  \item [(i)] $\mathcal{R}_{E_2}(E_1)$ is seimstable with phase $\phi$ or $\phi+1$;
  \item [(ii)] $\mathcal{L}_{E_1}(E_2)$ is seimstable with phase $\phi-i$ or $\phi-i-1$.
\end{itemize}
\end{lem}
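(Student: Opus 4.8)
The plan is to analyze the mutation triangles directly and invoke Lemma~\ref{ker and coker for ss}, since the mutation objects are precisely the (co)cones of evaluation-type maps between shifted copies of semistable objects sharing a common phase. First I would treat case (i). By the definition of right mutation, $\mathcal{R}_{E_2}(E_1)$ sits in the distinguished triangle
$$E_1\longrightarrow \D\Hom^{\bullet}(E_1,E_2)\otimes E_2\longrightarrow \mathcal{R}_{E_2}(E_1).$$
Since $\dim\Hom(E_1,E_2[i])=1$ or $2$ and, by Lemma~\ref{excep or similar} (the uniqueness of the nonzero Hom-degree for exceptional objects in $D^b(\coh\X)$), this is the only nonvanishing graded piece, the middle term is isomorphic to $E_2[i]\,$ or $E_2[i]^2$. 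Because $\phi(E_1)=\phi(E_2[i])=\phi$, both $E_1$ and the middle term lie in $\mathcal{P}(\phi)$. Thus the triangle has the shape required by Lemma~\ref{ker and coker for ss}, with $X=E_1$, $Y=\D\Hom^{\bullet}(E_1,E_2)\otimes E_2$ both in $\mathcal{P}(\phi)$, and $Z=\mathcal{R}_{E_2}(E_1)$ semistable (the mutation of an exceptional pair is exceptional, hence has division endomorphism ring, hence is semistable). Applying that lemma yields $\phi(\mathcal{R}_{E_2}(E_1))=\phi$ or $\phi+1$, which is exactly (i).

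For (ii) I would argue symmetrically using the left mutation triangle
$$\mathcal{L}_{E_1}(E_2)\longrightarrow \Hom^{\bullet}(E_1,E_2)\otimes E_1\longrightarrow E_2.$$
Shifting so as to land at a single phase, I rewrite this as a triangle of the form handled by Lemma~\ref{ker and coker for ss}: the middle term $\Hom^{\bullet}(E_1,E_2)\otimes E_1$ is a sum of copies of $E_1[i]$ (up to the degree $i$ carried by $\Hom^{\bullet}$), and $E_2$ shifted appropriately agrees in phase with $E_1[i]$ since $\phi(E_2[i])=\phi(E_1)$. Keeping careful track of the shift by $i$ contributed by $\Hom^{\bullet}(E_1,E_2)=\Hom(E_1,E_2[i])[-i]$, the lemma gives that $\mathcal{L}_{E_1}(E_2)$ is semistable with phase differing from $\phi$ by the appropriate amount, producing $\phi-i$ or $\phi-i-1$.

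The main obstacle I anticipate is \emph{bookkeeping of the grading shift} $i$ and the consequent phase translation, rather than any conceptual difficulty: one must correctly convert between $\Hom^{\bullet}(E_1,E_2)$, which is concentrated in degree $i$, and the genuine objects appearing in the mutation triangles, so that the hypotheses $X,Y\in\mathcal{P}(\psi)$ of Lemma~\ref{ker and coker for ss} hold for the \emph{same} phase $\psi$ after shifting. A secondary point needing a line of justification is \emph{why the mutation is semistable at all}: this follows because $\mathcal{R}_{E_2}(E_1)$ and $\mathcal{L}_{E_1}(E_2)$ are again exceptional objects, so their endomorphism algebras are division algebras, and by Theorem~\ref{semistable subcategory} (or directly, since every object lying in a single $\mathcal{P}(\phi)$-class after the mutation triangle forces semistability via Lemma~\ref{ker and coker for ss}) they are semistable. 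Once the shifts are reconciled, both conclusions drop out immediately from one application each of Lemma~\ref{ker and coker for ss}.
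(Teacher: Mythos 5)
There is a genuine gap at the heart of your argument: the semistability of the mutation objects. You justify it by saying that $\mathcal{R}_{E_2}(E_1)$ is exceptional, hence has division endomorphism ring, hence is semistable. That implication is false for a fixed stability condition $\sigma$: having a division endomorphism algebra is (by Theorem \ref{semistable subcategory}, which in any case concerns finest t-stabilities, not Bridgeland stability conditions) a \emph{necessary} condition for generating a semistable subcategory, not a sufficient one. Most exceptional objects are not $\sigma$-semistable for a given $\sigma$ --- indeed the entire point of the notion of a $\sigma$-exceptional sequence is that semistability is a nontrivial extra requirement on an exceptional object, and Propositions \ref{hn for line bundle}--\ref{hn for s} catalogue nontrivial HN filtrations of exceptional sheaves. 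Your fallback justification (``directly, since every object lying in a single $\mathcal{P}(\phi)$-class after the mutation triangle forces semistability via Lemma \ref{ker and coker for ss}'') is circular: Lemma \ref{ker and coker for ss} takes the semistability of $Z$ as a \emph{hypothesis} and only computes its phase; it cannot be used to establish semistability.

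Establishing semistability of the mutation is precisely what the paper's proof is devoted to, and it requires the hypothesis $\dim\Hom(E_1,E_2[i])\le 2$ in an essential way: the subcategory $\mathrm{Tr}(E_1,E_2)$ is identified with $D^b(\mod P_l)$ for $l=1$ or $2$. For $l=1$ this is $D^b(A_2)$, and one rules out a nontrivial HN filtration of $\mathcal{R}_{E_2}(E_1)$ using the Auslander--Reiten triangle $E_1\to E_2[1]\to\mathcal{R}_{E_2}(E_1)\to E_1[1]$ together with the phase inequality $\phi(E_2[1])<\phi(E_1[1])$. For $l=2$ one has $\mathrm{Tr}(E_1,E_2)\cong D^b(\coh\mathbb{P}^1)$ and invokes Okada's theorem that all indecomposable coherent sheaves on $\mathbb{P}^1$ are semistable (for the induced stability condition on the subcategory). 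Your remaining steps --- identifying the middle term of the mutation triangle with $E_2[i]^{\,l}$ via the uniqueness of the nonzero Hom-degree, and then applying Lemma \ref{ker and coker for ss} to read off the phase (with the bookkeeping of the shift by $i$ for the left mutation) --- do agree with how the paper concludes once semistability is in hand, but without the semistability argument the proof does not go through.
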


\begin{proof} By assumption, the triangulated subcategory $\rm{Tr}(E_1, E_2)$ is equivalent to the bounded derived category $D^b(\mod P_l)$, where $l=\dim\Hom(E_1, E_2[i])$ and $P_l$ is the $l$-Kronecker algebra, see for example \cite[Section 3.3]{EM}. In the following we show that $\mathcal{R}_{E_2}(E_1)$ is semistable. Then by Lemma \ref{ker and coker for ss},  it has phase $\phi$ or $\phi+1$, so we finish the proof of (i). The proof for the second statement is similar.

Indeed, if $l=1$, then $\rm{Tr}(E_1, E_2)$ is triangulated
equivalent to the bounded derived category of type $A_2$. Hence we
have the Auslander--Reiten triangle $E_1\to E_2[1]\to
\mathcal{R}_{E_2}(E_1)\to  E_1[1]$.
%and $\mathcal{R}_{E_2}(E_1)$ fits into the triangle $E_1\to E_2[1]\to \mathcal{R}_{E_2}(E_1)$.
If $\mathcal{R}_{E_2}(E_1)$ is not semistable, then it has the
following HN-filtration $E_2[1]\to \mathcal{R}_{E_2}(E_1)\to
E_1[1]$. But $\phi(E_2[1])<\phi(E_1[1])$, a contradiction. Hence
$\mathcal{R}_{E_2}(E_1)$ is semistable. If $l=2$, then there is a
triangulated equivalence $\rm{Tr}(E_1, E_2)\cong\coh(\mathbb{P}^1)$.
If we view the equivalence as identity, we can assume  $(E_1,
E_2[i])=(\co, \co(1))$ without loss of generality. Then by \cite[Thm. 1.2]{SO}, all the indecomposable coherent
sheaves are semistable, in particular, $\mathcal{R}_{E_2}(E_1)$ is
semistable. We are done.

\end{proof}

Now we focus on the bounded derived category
$\cal{D}=D^{b}(\coh\mathbb{X})$ for the weighted projective line
$\mathbb{X}$ of weight type (2).

\begin{lem}\label{Kronecker embedding}
Assume $L,L(\vc)\in\mathcal{P}(\phi)$ for a line bundle $L$. Then
one of the following holds:
\begin{itemize}
  \item [(i)] there exists $n>0$ such that $\langle L(n\vc),L(n\vc+\vc)[-1]\rangle\subseteq \mathcal{P}(\phi)$;
  \item [(ii)] there exists $m>0$ such that $\langle L(-m\vc+\vc),L(-m\vc)[1]\rangle\subseteq \mathcal{P}(\phi)$.
  \end{itemize}
Consequently, $\mathcal{P}(\phi)$ contains a subcategory of the form
$\mod kP_2$, where $P_2$ is the Kronecker quiver.
\end{lem}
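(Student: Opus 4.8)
The plan is to realise the even line bundles $L(k\vc)$, $k\in\bbZ$, as an iterated chain of mutations of the Kronecker pair $(L,L(\vc))$, and to run Lemma \ref{mutation semistable} along this chain until the phase first jumps by $1$. The homological input I would record first: by Serre duality \eqref{serre duality} one has $\Hom^{\bullet}(L(\vc),L)=0$, while $\Hom(L,L(\vc))\cong S_{\vc}=\langle x_1^2,x_2\rangle$ is two-dimensional, so $(L,L(\vc))$ is an exceptional Kronecker pair and, by translation, so is every consecutive pair $(L(k\vc),L((k+1)\vc))$. The Euler-type exact sequence
$$0\lra L((k-1)\vc)\lra L(k\vc)^2\lra L((k+1)\vc)\lra 0$$
identifies the mutations: $\mathcal{R}_{L((k+1)\vc)}(L(k\vc))=L((k+2)\vc)$ and $\mathcal{L}_{L((k-1)\vc)}(L(k\vc))=L((k-2)\vc)$; in $K_0$ the same sequence gives $[L((k+1)\vc)]=2[L(k\vc)]-[L((k-1)\vc)]$, so $Z(L(k\vc))$ is affine-linear in $k$.

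Next I would propagate semistability upward. Suppose $L(k\vc),L((k+1)\vc)\in\mathcal{P}(\phi)$; applying Lemma \ref{mutation semistable} to this Kronecker pair (with $i=0$) shows $L((k+2)\vc)=\mathcal{R}_{L((k+1)\vc)}(L(k\vc))$ is semistable of phase $\phi$ or $\phi+1$. If the phase is $\phi+1$ I stop: then $L((k+1)\vc)\in\mathcal{P}(\phi)$ and $L((k+2)\vc)[-1]\in\mathcal{P}(\phi)$, which is exactly (i) with $n=k+1>0$. If the phase is $\phi$ I continue the induction. The symmetric downward process, using $\mathcal{L}_{L((k-1)\vc)}(L(k\vc))=L((k-2)\vc)$ together with part (ii) of Lemma \ref{mutation semistable}, yields (ii) the first time the phase drops to $\phi-1$.

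It remains to exclude the case in which neither process ever terminates, i.e. $L(k\vc)\in\mathcal{P}(\phi)$ for all $k\in\bbZ$; this is the step I expect to be the main obstacle. Here $Z(L(k\vc))=m_k\,e^{i\pi\phi}$ with $m_k=|Z(L(k\vc))|>0$, and an affine-linear function positive on all of $\bbZ$ is constant, so $Z(L(\vc))=Z(L)$ and hence $Z(S_\lambda)=0$ for the ordinary simple $S_\lambda$ occurring in $0\to L\to L(\vc)\to S_\lambda\to 0$. I would close this by showing $S_\lambda$ is $\sigma$-semistable, for then Lemma \ref{ker and coker for ss} applied to that triangle forces $\phi(S_\lambda)\in\{\phi,\phi+1\}$ and in particular $Z(S_\lambda)\neq 0$, a contradiction. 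Establishing semistability of the cone $S_\lambda$ — equivalently, ruling out that the unbounded family of classes $[L(k\vc)]$ all carry the single bounded mass $m_0$, which contradicts local finiteness of $\sigma$ — is the delicate point.

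Finally, for the \lq\lq consequently'' clause I would take, in case (i), $A=L(n\vc)$ and $B=L((n+1)\vc)[-1]$, both of which lie in $\mathcal{P}(\phi)$. A short Serre-duality computation gives $\Hom(A,B)=\Hom(B,A)=0$ and $\Ext^1(B,A)=0$, while $\Ext^1(A,B)\cong S_{\vc}\cong k^2$; thus $A$ and $B$ are orthogonal bricks joined by a two-dimensional extension space, and since $\mathcal{P}(\phi)$ is abelian and extension-closed the subcategory $\langle A,B\rangle\subseteq\mathcal{P}(\phi)$ is equivalent to $\mod kP_2$. Case (ii) is identical after a shift.
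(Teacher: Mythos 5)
Your main induction is exactly the paper's: the paper also feeds the canonical triangle $L'(-\vc)\to L'\oplus L'\to L'(\vc)\to L'(-\vc)[1]$ into Lemma \ref{mutation semistable}, propagates semistability of $L(k\vc)$ upward and downward, and stops the first time the phase jumps to $\phi+1$ (resp.\ drops to $\phi-1$), which yields alternative (i) (resp.\ (ii)); your identification of the mutations and your verification of the ``consequently'' clause are both fine. The genuine gap --- which you flag yourself --- is the exclusion of the remaining case $L(k\vc)\in\mathcal{P}(\phi)$ for all $k\in\bbZ$, and the route you sketch does not close it. Your central charge computation is correct up to $Z(L(k\vc))=Z(L)$ for all $k$ and hence $Z(S_\lambda)=0$, but this is not yet a contradiction: $Z$ can vanish on non-semistable objects, and in this very situation $S_\lambda=\cone(L\to L(\vc))$ is, by the octahedral argument used in the proof of Lemma \ref{ker and coker for ss}, an extension of $\coker(f)$ by $\ker(f)[1]$ with both pieces taken in the abelian category $\mathcal{P}(\phi)$. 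Its HN factors therefore have phases only in $\{\phi,\phi+1\}$, and the corresponding central charges lie on a single line through the origin and can cancel exactly. So $Z(S_\lambda)=0$ is fully consistent with the axioms for a non-semistable $S_\lambda$, and establishing the semistability of $S_\lambda$ is not a technical afterthought but would be the entire content of the missing step.

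The paper avoids this issue altogether: it disposes of the case ``$L(m\vc)\in\mathcal{P}(\phi)$ for all $m\in\bbZ$'' in one line by appealing to the fact that $\mathcal{P}(\phi)$ is of finite length (the stability conditions in play are locally finite), so no central charge argument is needed. If you want to keep your $K_0$-flavoured approach, you would still have to import a finiteness input of this kind --- finite length of $\mathcal{P}(\phi)$ or a support-type bound --- since, as written, your argument need not terminate.
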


\begin{proof}
For any line bundle $L'$, we have the following canonical triangle
in $\mathcal{D}$: $$\xi(L'): \xymatrix@C=0.5cm{
  L'(-\vc) \ar[rr]^{(x_2,-x_1^2)^t} && L'\oplus L' \ar[rr]^{(x_1^2,x_2)} && L'(\vc) \ar[rr]&& L'(-\vc)[1] }$$
By the assumption, $L,L(\vc)\in\mathcal{P}(\phi)$. We claim that one of
the following statements holds:
\begin{itemize}
  \item[-]  for any $m>0$, $L(-m\vc)\in\mathcal{P}(\phi)$;
  \item[-]  there exists $m>0$, such that $L(-m\vc+\vc) \in\mathcal{P}(\phi)$ and $L(-m\vc)[1] \in\mathcal{P}(\phi)$.
   \end{itemize}
   In fact, from the canonical triangle $\xi(L)$ and by Lemma \ref{mutation semistable} we get $L(-\vc)$ is semistable with phase $\phi$ or $\phi-1$. If $\phi(L(-\vc))=\phi-1$, then the second statement holds by taking $m=1$. If $\phi(L(-\vc))=\phi$, by considering the canonical triangle $\xi(L(-\vc))$ we get $L(-2\vc)$ is semistable with phase $\phi$ or $\phi-1$. Repeating the procedure gives the proof of the claim.

Dually, one of the following statements holds:
\begin{itemize}
  \item[-]  for any $n>0$, $L(n\vc)\in\mathcal{P}(\phi)$;
  \item[-]  there exists $n>0$, such that $L(n\vc) \in\mathcal{P}(\phi)$ and $L(n\vc+\vc)[-1] \in\mathcal{P}(\phi)$.
   \end{itemize}

Therefore, we conclude that one of the following holds:
\begin{itemize}
  \item [(i)] for any $m\in\mathbb{Z}$, $L(m\vc)\in\mathcal{P}(\phi)$; in this case, we get a contradiction to the fact that $\mathcal{P}(\phi)$ is of finite length;
  \item [(ii)] there exists $m>0$, such that $L(-m\vc+\vc) \in\mathcal{P}(\phi)$ and $L(-m\vc)[1] \in\mathcal{P}(\phi)$; in this case, $\langle L(-m\vc+\vc),L(-m\vc)[1]\rangle\subseteq \mathcal{P}(\phi)$;
  \item [(iii)] there exists $n>0$, such that $L(n\vc) \in\mathcal{P}(\phi)$ and $L(n\vc+\vc)[-1] \in\mathcal{P}(\phi)$; in this case, $\langle L(n\vc),L(n\vc+\vc)[-1]\rangle\subseteq \mathcal{P}(\phi)$.
  \end{itemize}
\end{proof}

In the following we show the existence of a $\sigma$-exceptional
triple in $\mathcal{D}$ for certain special cases.

\begin{lem}\label{dim two case} Assume $L,L(\vc)\in\mathcal{P}(\phi)$ for a line bundle $L$. If $S_{1,L}[-i]$ or $\tau S_{1,L}[i-1]\in\mathcal{P}(\phi)$ for some $i>0$, then there exists a $\sigma$-exceptional triple.
\end{lem}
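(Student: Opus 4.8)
The plan is to exhibit, in each of the two cases, a complete exceptional triple whose three terms are semistable of one common phase $\psi$ and which is Ext-exceptional; such a triple satisfies conditions (i)--(iii) of a $\sigma$-exceptional sequence simultaneously (for (iii) take the window $(\psi-1,\psi]$, which contains $\psi$). By the normal form \eqref{ext-exc law} valid in $\mathcal{D}$, being Ext-exceptional means writing the triple as $(\widetilde{E}_0[k_0],\widetilde{E}_1[k_1],\widetilde{E}_2[k_2])$ with $k_0\geq k_1\geq k_2$. Every object I shall use, namely the $\vc$-twists $L(m\vc)$ and the tube simples $S_{1,L},\tau S_{1,L}$, is semistable of phase lying in $\phi+\bbZ$ by Lemma \ref{mutation semistable}; so the task reduces to choosing integer shifts that equalize the three phases while respecting $k_0\geq k_1\geq k_2$.

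The obstruction is that $L$ and $L(\vc)$ cannot both appear: they lie in the single heart $\mathcal{P}(\phi)$ and $\Hom(L,L(\vc))\cong S_{\vc}\neq 0$, so no shifts make this pair Ext-exceptional at a common phase. I would therefore slide along the $\vc$-twists. Applying Lemma \ref{mutation semistable} repeatedly to the pairs $(L(k\vc),L((k+1)\vc))$ as long as they share a phase (a process that terminates, since $\mathcal{P}(\phi)$ has finite length, exactly as in the proof of Lemma \ref{Kronecker embedding}) produces on one side an \emph{upward jump}: some $n\geq 1$ with $L(n\vc)\in\mathcal{P}(\phi)$ and $L((n+1)\vc)\in\mathcal{P}(\phi)[1]$; and on the other side a \emph{downward jump}: some $m\geq 1$ with $L(-m\vc)\in\mathcal{P}(\phi)[-1]$ and $L((1-m)\vc)\in\mathcal{P}(\phi)$. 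Because $L$ and $L(\vc)$ have the \emph{same} phase $\phi$, both jumps are available. For either choice of such an $M$ the graded space $\Hom^{\bullet}(M,M(\vc)[-1])$ is concentrated in degree $1$, so $\Hom^{\leq 0}(M,M(\vc)[-1])=0$ and $(M,M(\vc)[-1])$ is an Ext-exceptional pair living in one heart.

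It remains to adjoin the simple, using the $\vc$-invariance $S_{1,M}=S_{1,L}$ and $\tau S_{1,M}=\tau S_{1,L}$ of the tube simples and matching the jump direction to the case. If $S_{1,L}[-i]\in\mathcal{P}(\phi)$, I take the downward jump $M=L(-m\vc)$ and twist the type-(ii) sequence of Example \ref{kij} by $M$, obtaining
\[
(M,\;M(\vc)[-1],\;S_{1,L}[-1-i]).
\]
Its three terms are semistable of phase $\phi-1$, and relative to the standard form $(M,M(\vc)[-1],S_{1,L}[-2])$ the shift vector is $(0,0,1-i)$, which satisfies $0\geq 0\geq 1-i$ since $i\geq 1$. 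If instead $\tau S_{1,L}[i-1]\in\mathcal{P}(\phi)$, I take the upward jump $M=L(n\vc)$ and twist the type-(iv) sequence by $M$, obtaining
\[
(\tau S_{1,L}[i-1],\;M,\;M(\vc)[-1]),
\]
whose terms are semistable of phase $\phi$ with shift vector $(i-1,0,0)$ satisfying $i-1\geq 0\geq 0$. In each case the underlying ordered collection is a $\vc$-twist of a complete exceptional sequence from Proposition \ref{exceptional pair and triple}, hence exceptional, and the shift orders just computed confirm Ext-exceptionality.

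The step I expect to require the most care is the one that removes any boundary trouble at $i=1$: the assertion that from $L,L(\vc)\in\mathcal{P}(\phi)$ one obtains $\vc$-jumps on \emph{both} sides, rather than only the single jump recorded in Lemma \ref{Kronecker embedding}. It is precisely the equal-phase hypothesis on $L$ and $L(\vc)$ that makes this possible, and it is what lets me align the jump direction with the side on which $S_{1,L}$ (resp.\ $\tau S_{1,L}$) sits, so that the order $k_0\geq k_1\geq k_2$ never fails. Once this is secured the argument collapses to the phase bookkeeping displayed above, which is routine given Lemma \ref{mutation semistable}, the form \eqref{ext-exc law}, and the $\vc$-invariance of the tube simples.
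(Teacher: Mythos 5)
Your phase bookkeeping and the Ext-exceptionality checks via Example \ref{kij} are correct, and choosing the direction of the $\vc$-jump to match the side on which the simple sits would indeed make the $i=1$ boundary case disappear. The genuine gap is your assertion that \emph{both} jumps are available. Lemma \ref{Kronecker embedding} and its proof deliver only \emph{one} of the two: the argument there shows that if the downward direction never jumps then $L(-m\vc)\in\mathcal{P}(\phi)$ for all $m>0$, dually for the upward direction, and it derives a contradiction with finite length only when \emph{neither} direction jumps, i.e.\ when $L(m\vc)\in\mathcal{P}(\phi)$ for all $m\in\mathbb{Z}$. Nothing in the paper excludes the one-sided situation in which every $L(-m\vc)$, $m>0$, stays in $\mathcal{P}(\phi)$ while only the upward direction jumps; in that situation your Case 1 (where $S_{1,L}[-i]\in\mathcal{P}(\phi)$ and you need the downward jump) has no $M$ to work with. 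Ruling out a one-sided infinite family $\{L(-m\vc)\}_{m>0}\subseteq\mathcal{P}(\phi)$ requires an additional argument (an infinite chain of subobjects in the heart, or a central-charge estimate), which is strictly more than ``exactly as in the proof of Lemma \ref{Kronecker embedding}'' and which you do not supply.

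This is precisely where your route diverges from the paper's. The paper takes whichever jump Lemma \ref{Kronecker embedding} happens to provide: with the downward jump it writes down essentially your triple (shifted by $[1]$), valid for all $i\geq 1$; with the upward jump the triple $(\co(n\vc),\co(n\vc+\vc)[-1],S_{1,0}[-i])$ works only for $i>1$, and for $i=1$ the degree-zero map $\co(n\vc+\vc)\to S_{1,0}$ violates condition (ii), so the paper performs further mutation steps via Lemma \ref{mutation semistable} (producing $\co(n\vc+\vx_1)$ and then $S_{1,1}$) and splits into sub-cases according to the resulting phases. To repair your proof you must either justify that the jump can always be chosen in the required direction, or add the paper's mutation analysis for the mismatched case. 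A minor further point: you invoke the converse of Remark \ref{excep}, namely that $k_0\geq k_1\geq k_2$ \emph{implies} Ext-exceptionality; this is true because the forward $\Hom$'s of the normalized triples in Example \ref{kij} sit in degrees $1,1,2$, but it is not what is literally stated there and deserves a sentence.
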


\begin{proof} Without loss of generality, we assume $L=\co$ and prove the result under the assumption that $S_{1,0}[-i]\in\mathcal{P}(\phi)$. The proof under the assumption that $\tau S_{1,L}[-i-1]\in\mathcal{P}(\phi)$ can be treated similarly. By Lemma \ref{Kronecker embedding}, there exists some $n>0$, such that $(\co(-n\vc)[1],\co(-n\vc+\vc))$ or $(\co(n\vc),\co(n\vc+\vc)[-1])$ is an exceptional pair in $\mathcal{P}(\phi)$.
For the first case, we get
$(\co(-n\vc)[1],\co(-n\vc+\vc),S_{1,0}[-i])$ is a
$\sigma$-exceptional triple. For the second case, consider the
exceptional triple
$\mathcal{E}:=(\co(n\vc),\co(n\vc+\vc)[-1],S_{1,0}[-i])$ in
$\mathcal{P}(\phi)$. If $i>1$, then $\mathcal{E}$ is a
$\sigma$-exceptional triple.  If $i=1$, then from the triangle
$$\co(n\vc+\vx_1)[-1]\to\co(n\vc+\vc)[-1]\to S_{1,0}[-1]$$ and by
Lemma \ref{mutation semistable} we know that
$\phi(\co(n\vc+\vx_1))=\phi+1$ or $\phi$.
\begin{itemize}
  \item[-] If $\phi(\co(n\vc+\vx_1))=\phi+1$, then $(\co(n\vc),S_{1,0}[-1],\co(n\vc+\vx_1)[-1])$ is a $\sigma$-exceptional triple.
  \item[-] If $\phi(\co(n\vc+\vx_1))=\phi$, then from the triangle $$\co(n\vc)\to\co(n\vc+\vx_1)\to S_{1,1}$$ we know that $\phi(S_{1,1})=\phi+1$ or $\phi$. It follows that $(\co(n\vc+\vx_1),S_{1,1}[-1],\co(n\vc+\vc)[-1])$ or $(S_{1,1},\co(n\vc),\co(n\vc+\vc)[-1])$ is a $\sigma$-exceptional triple.
\end{itemize}
\end{proof}

An exceptional triple $(E_0,E_1,E_2)$ is said to be of $\sigma$-type
$(\phi_1,\phi_2,\phi_3)$ if $E_i$ is $\sigma$-semistable and
$\phi(E_i)=\phi_i$ for $1\leq i\leq 3$.

\begin{lem}\label{weak-exc-seq-eq} Let $(E_0,E_1,E_2)$ be an Ext-exceptional triple in $\mathcal{D}$ of $\sigma$-type $(\phi+1,\phi,\phi)$ or $(\phi+1,\phi+1,\phi)$.
%with $(E_0[-1],E_1,E_2)\subseteq\mathcal{P}(\phi).$ %$\phi(E_0)=\phi+1,\phi(E_1)=\phi(E_2)=\phi$.
Then there is a $\sigma$-exceptional triple in $\mathcal{D}$.
\end{lem}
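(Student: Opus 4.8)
The two $\sigma$-types are exactly the ``boundary'' configurations in which the three phases fill the \emph{closed} interval $[\phi,\phi+1]$, so the given triple is only weakly $\sigma$-exceptional; the plan is to manufacture from it a genuine one, whose phases must lie in a half-open interval of length one. It suffices to reach a triple all of whose semistable factors share a single phase (such a value always lies in some $(\psi,\psi+1]$) while keeping both the exceptional-sequence and the Ext-exceptional properties. I would work throughout with the unique ``descending'' adjacent pair---$(E_0,E_1)$ for the type $(\phi+1,\phi,\phi)$ and $(E_1,E_2)$ for the type $(\phi+1,\phi+1,\phi)$---whose phases are $\phi+1$ and $\phi$, and I would let $d\geq 1$ be the unique integer with $\Hom(E_0,E_1[d])\neq 0$ (resp. $\Hom(E_1,E_2[d])\neq 0$); such a $d$ exists, is unique, and is positive by Ext-exceptionality together with the absence of Hom-orthogonal exceptional pairs in weight type $(2)$ (Remark~\ref{excep}).

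First I would dispose of the case $d\geq 2$ by a single shift. For the type $(\phi+1,\phi+1,\phi)$ we then have $\Hom(E_1,E_2[1])=0$, so Lemma~\ref{12to02} forces $\Hom(E_0,E_2[1])=0$ as well; hence the exceptional sequence $(E_0,E_1,E_2[1])$ remains Ext-exceptional---all the relevant non-positive-degree Homs vanish because the two connecting degrees are $\geq 2$---and now has all three factors of phase $\phi+1$. The symmetric move, shifting $E_0$ down to $E_0[-1]$, handles the type $(\phi+1,\phi,\phi)$ when the front pair has $d\geq 2$, once more using Lemma~\ref{12to02} to guarantee $\Hom(E_0,E_2[1])=0$. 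In either case one lands on an all-equal-phase, hence $\sigma$-exceptional, triple.

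The remaining case is $d=1$, where the two objects of the descending pair, after shifting the lower one by $[1]$, share the phase $\phi+1$, so Lemma~\ref{mutation semistable} applies. Mutating that pair and shifting by $[\pm 1]$ to restore Ext-exceptionality yields a new Ext-exceptional triple whose mutated factor is semistable of a controlled phase. For instance, in the type $(\phi+1,\phi+1,\phi)$ the left mutation $\mathcal{L}_{E_1}(E_2)[1]$ is semistable of phase $\phi+1$ or $\phi$ by Lemma~\ref{mutation semistable}(ii); in the first (favorable) subcase the triple $(E_0,\mathcal{L}_{E_1}(E_2)[1],E_1)$ has all phases $\phi+1$, and we are done. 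The unfavorable outcome, by contrast, merely returns another boundary triple of one of the two types.

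The main obstacle is therefore to exclude an infinite regress through unfavorable mutations. Here I would invoke finiteness: each unfavorable step produces a genuinely new exceptional object whose phase is forced into $\{\phi,\phi+1\}$, i.e. whose central charge lies on the single real line $\mathbb{R}\,\exp(\sqrt{-1}\,\pi\phi)$ through the origin. On the weight-$(2)$ line the exceptional objects are, up to shift, the line bundles $\co(n\vx_1)$ together with $S_{1,0}$ and $S_{1,1}$, so an infinite such chain would drive infinitely many non-isomorphic line bundles into the single finite-length category $\mathcal{P}(\phi)$---impossible, this being exactly the finiteness already exploited in the proof of Lemma~\ref{Kronecker embedding}. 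Hence after finitely many mutations the tie must break; alternatively, once two line bundles differing by $\vc$ are recognized in a common $\mathcal{P}(\phi)$, one may conclude directly via Lemma~\ref{dim two case}. I expect this termination step, together with the careful shift bookkeeping needed to keep every intermediate triple Ext-exceptional, to be the delicate part of the argument.
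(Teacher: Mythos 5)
Your reduction of the case $d\geq 2$ is correct and coincides with the paper's first case ($\Ext^1(E_0,E_1)=0$, resp.\ $\Ext^1(E_1,E_2)=0$, plus Lemma \ref{12to02}). The gap is in the case $d=1$, and it is concrete: your ``favorable'' triple is not $\sigma$-exceptional. Write $W=\mathcal{L}_{E_1}(E_2)$, so the defining triangle is $W\xrightarrow{a} E_1[-1]\to E_2\to W[1]$; the map $a$ is nonzero (otherwise $E_2\cong E_1[-1]\oplus W[1]$, contradicting indecomposability and exceptionality of the pair), hence $\Hom(W[1],E_1)=\Hom(W,E_1[-1])\neq 0$. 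So the triple $(E_0,W[1],E_1)$ violates condition (ii) of the definition even though all three phases equal $\phi+1$: a common phase only forces the strictly negative Homs to vanish, while the degree-zero Hom created by the connecting map of the mutation triangle survives. To restore $\Hom^{\leq 0}=0$ one must re-shift so that the connecting Hom of the new pair sits in degree $\geq 1$, and then the phases are no longer all equal. This is precisely why the paper does not argue by ``make all phases equal'' but instead exhibits, case by case, specific shifted arrangements --- $(W_1[1],E_0[-1],E_2)$, $(E_1,W_1,E_2)$, $(W_1[1],W_2[1],E_0[-1])$, $(E_1,W_3[1],W_1)$, etc.\ --- and checks all three conditions for each.

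The second problem is your termination argument, which is both unnecessary and unsupported. The paper's proof terminates after at most two mutations by construction: the first mutation moves everything into a single $\mathcal{P}(\phi)$, the surviving Hom-space has dimension $\leq 1$ or $=2$ by Proposition \ref{exceptional pair and triple}, the one-dimensional case is finished by one further mutation, and the two-dimensional case (where the pair is, up to twist and shift, $(\co,\co(\vc))$ and the third object is a shift of $S_{1,0}$ or $S_{1,1}$) is delegated to Lemma \ref{dim two case}. No infinite regress arises. Your iteration, by contrast, rests on two unproved claims --- that each unfavorable mutation produces a genuinely new exceptional object (a priori the process could cycle), and that infinitely many line bundles landing in $\mathcal{P}(\phi)\cup\mathcal{P}(\phi+1)$ with varying shifts contradicts finite length --- and it does not say what to do when the mutated pair has a two-dimensional Hom-space, a case that genuinely occurs and falls outside your ``boundary triple'' dichotomy. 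The fix is to abandon the iteration and carry out the explicit two-step analysis with the correct shifts.
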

\begin{proof}
We only prove the existence of $\sigma$-exceptional triple if
$(E_0,E_1,E_2)$ has $\sigma$-type $(\phi+1,\phi,\phi)$. The proof
for the other one is similar. By Proposition \ref{exceptional pair
and triple} $\dim\Ext^1(E_0,E_1)\leq 2$. If $\Ext^1(E_0,E_1)=0$,
then by Lemma \ref{12to02} we have $\Ext^1(E_0,E_2)=0$, thus
$(E_0[-1],E_1,E_2)$ is a $\sigma$-exceptional triple. If
$\dim\Ext^1(E_0,E_1)=2$, then we can assume
$(E_0,E_1)=(\co[1],\co(\vc))$ without loss of generality. It follows
that $E_2=S_{1,0}[-i]$ for some $i>0$, then we are done by Lemma
\ref{dim two case}. Hence we assume $\dim\Ext^1(E_0,E_1)=1$ in the
following. Denote by $W_1=\mathcal{L}_{E_0}(E_1)$, then there is a
triangle $W_1\to E_0[-1]\to E_1\to W_1[1]$. It follows that
$\phi(W_1)=\phi-1$ or $\phi$.

Assume $\phi(W_1)=\phi-1$, then
$\mathcal{E}_1:=(W_1[1],E_0[-1],E_2)$ is an exceptional triple in
$\mathcal{P}(\phi)$. By Remark \ref{excep} and Example
\ref{kij} we know that $\dim\Hom(E_0[-1],E_2)\leq 1$. If
$\dim\Hom(E_0[-1],E_2)=0$, then $\mathcal{E}_1$ is a
$\sigma$-exceptional triple. If $\dim\Hom(E_0[-1],E_2)=1$, then
$W_2:=\mathcal{L}_{E_0}(E_2)$ is semistable which fits into the
triangle $W_2\to E_0[-1]\to E_2\to W_2[1]$. Hence $\phi(W_2)=\phi$
or $\phi-1$.  It follows that $(W_1[1],W_2[1],E_0[-1])$ or
$(W_1[1],E_2,W_2)$ is a $\sigma$-exceptional triple,
since $\Hom(W_1[1],W_2[1])\cong\Hom(E_1,W_2[1])=0$ and
$\Hom(W_1[1],E_2)=0$.

Now assume $\phi(W_1)=\phi$, then $\mathcal{E}_2:=(E_1,W_1,E_2)$ is
an exceptional triple in $\mathcal{P}(\phi)$ with
$\dim\Hom(W_1,E_2)\leq 2$. If $\dim\Hom(W_1,E_2)=0$, then
$\mathcal{E}_2$ is a $\sigma$-exceptional triple. If
$\dim\Hom(W_1,E_2)=1$, then $W_3:=\mathcal{L}_{W_1}(E_2)$ is
semistable which fits into the triangle  $W_3\to W_1\to E_2\to
W_3[1]$. Hence $\phi(W_3)=\phi$ or $\phi-1$. It follows that
$(E_1,E_2,W_3)$ or $(E_1,W_3[1],W_1)$ is a $\sigma$-exceptional
triple. If $\dim\Hom(W_1,E_2)=2$, then we can assume
$(W_1,E_2)=(\co,\co(\vc))$ without loss of generality. It follows
that $E_1=S_{1,1}[i-1]$ for some $i>0$, then we are done by Lemma
\ref{dim two case}. This finishes the proof.
\end{proof}

\begin{prop}\label{closed interval to open interval}
%Let $\sigma=(Z,\mathcal{P})$ be a stability condition on $\mathcal{D}$.
If $\mathcal{D}$ admits a weakly $\sigma$-exceptional triple, then
there is a $\sigma$-exceptional triple in $\mathcal{D}$.
\end{prop}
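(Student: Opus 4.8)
The plan is to isolate the single way in which a weakly $\sigma$-exceptional triple can fail to be $\sigma$-exceptional. A weakly $\sigma$-exceptional triple $(E_0,E_1,E_2)$ is nothing but an Ext-exceptional triple whose three phases lie in a closed window $[\phi,\phi+1]$, and it is already $\sigma$-exceptional precisely when those phases fit into some half-open window of length one; a short check shows this happens exactly when $\max_i\phi(E_i)-\min_i\phi(E_i)<1$. Since the weak hypothesis only forces this span to be $\le 1$, the first (harmless) step is to reduce to the genuinely weak case where the span equals $1$: after an overall shift I may assume $\phi(E_i)\in[\phi,\phi+1]$ for every $i$, with both endpoints $\phi$ and $\phi+1$ attained.

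The point that opens up the genuinely weak case is that it always places two members of the triple in a common slice: picking indices $p,q$ with $\phi(E_p)=\phi+1$ and $\phi(E_q)=\phi$, the semistable objects $E_p$ and $E_q[1]$ both lie in $\mathcal{P}(\phi+1)$, which is exactly the input required by the machinery built above. I would then argue along the exceptional subpair formed by $E_p$ and $E_q$: by Proposition \ref{exceptional pair and triple} its underlying pair is, up to twist and shift, a pair of line bundles or a line-bundle/simple-sheaf pair, so Lemmas \ref{mutation semistable}, \ref{Kronecker embedding} and \ref{dim two case} govern the phases of the relevant mutations. The aim is either to produce a $\sigma$-exceptional triple outright through Lemma \ref{dim two case}, or to rewrite $(E_0,E_1,E_2)$, up to mutation and overall shift, as an Ext-exceptional triple of $\sigma$-type $(\phi+1,\phi,\phi)$ or $(\phi+1,\phi+1,\phi)$ and then quote Lemma \ref{weak-exc-seq-eq}. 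I expect to organize this as a case analysis according to which indices realize the extreme phases and whether the third phase is an endpoint or lies strictly inside $(\phi,\phi+1)$. When the middle phase is an endpoint, the $\sigma$-type is literally one of the two treated in Lemma \ref{weak-exc-seq-eq} (modulo the symmetry between the hearts $\mathcal{P}(\phi,\phi+1]$ and $\mathcal{P}[\phi,\phi+1)$ from the remark after the definition of a $\sigma$-exceptional sequence), so this branch closes immediately.

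The hard part will be the configurations in which the middle term has a strictly interior phase, for instance $\sigma$-type $(\phi+1,b,\phi)$ with $\phi<b<\phi+1$. These do occur --- for example a triple built from three semistable line bundles $\co,\co(\vx_1),\co(\vc)$ whose central charges have generic arguments --- and they are not of the shape assumed by Lemma \ref{weak-exc-seq-eq}; moreover the one-sided inequality $\phi(\text{source})\le\phi(\text{target})$ coming from a single nonzero $\Hom$ between semistables is by itself too weak to shrink the span within the original exceptional sequence, so no amount of shifting the given triple suffices. The remedy is to genuinely mutate the extreme pair $(E_p,E_q)$ inside the common slice $\mathcal{P}(\phi+1)$, using the canonical triangle $\xi(L)$ from Lemma \ref{Kronecker embedding} together with Lemmas \ref{mutation semistable} and \ref{ker and coker for ss} to certify that the new terms stay semistable with phases confined to a half-open window. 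Carrying this verification out uniformly, rather than rechecking it for each of the four normalized sequences of Example \ref{kij} in turn, is the principal technical obstacle.
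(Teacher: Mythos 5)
Your setup is sound and matches the paper's opening moves: reduce to the case where both endpoint phases $\phi$ and $\phi+1$ are attained, dispose of the branch where the third phase is also an endpoint via Lemma \ref{weak-exc-seq-eq}, and recognize that the remaining difficulty is a triple whose third phase lies strictly in $(\phi,\phi+1)$. But at exactly that point your proposal stops being a proof: you name the interior-phase configurations as ``the principal technical obstacle'' and describe a strategy for them without carrying it out, so the heart of the proposition is missing. Worse, the strategy you sketch --- mutate ``the extreme pair $(E_p,E_q)$ inside the common slice $\mathcal{P}(\phi+1)$'' --- is misdirected in the hardest configuration. For $\sigma$-type $(\phi+1,\psi,\phi)$ the two extreme-phase objects are $E_0$ and $E_2$, which are \emph{not adjacent} in the exceptional order; mutating that pair is not a braid move on the triple and does not produce a new exceptional triple containing $E_1$. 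The paper's proof handles this case differently: it first checks whether $\Hom(E_1,E_2[1])=0$ (in which case Lemma \ref{12to02} forces $\Hom(E_0,E_2[1])=0$ and one may simply shift $E_2$), and otherwise mutates the \emph{adjacent} pair $(E_1,E_2)\subseteq\mathcal{P}[\phi,\phi+1)$, invoking the induced stability condition on ${\rm Tr}(E_1,E_2)$ (via \cite[Prop.~3.15]{DK}) and Macri's semistability of mutations on curves (\cite[Prop.~3.17]{EM}) to place $\mathcal{R}_{E_2}(E_1)$ in a phase interval that converts the triple into one of the types already treated. None of this appears in your outline.

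Two further points you would need to supply even in the cases where your plan is aimed at the right pair. For type $(\phi+1,\phi,\psi)$ the paper splits on $\dim\Ext^1(E_0,E_1)\in\{0,1,2\}$: the value $0$ again needs Lemma \ref{12to02}, the value $1$ needs Lemma \ref{mutation semistable} applied to the adjacent pair together with a check of which of the two resulting triples is Ext-exceptional, and the value $2$ forces $(E_0,E_1)=(\co[1],\co(\vc))$ up to twist, where Lemma \ref{Kronecker embedding} (which requires specifically $L,L(\vc)\in\mathcal{P}(\phi)$, not merely two semistables in a common slice) and Lemma \ref{dim two case} take over. Your text gestures at these lemmas but does not say which hypothesis is verified where, and your claim that having $E_p$ and $E_q[1]$ in a common slice ``is exactly the input required by the machinery'' overstates what Lemmas \ref{mutation semistable} and \ref{Kronecker embedding} actually assume. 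As it stands the proposal is a plausible road map whose decisive cases are unproved and whose announced route through the non-adjacent extreme pair would need to be replaced by the paper's adjacent-pair mutation argument.
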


\begin{proof}
Let $\mathcal{E}:=(E_0,E_1,E_2)$ be a weakly $\sigma$-exceptional
triple in $\mathcal{D}$ with $\phi(E_i)\in[\phi,\phi+1]$ for some
$\phi$. If $\phi(E_0)=\phi$ or $\phi(E_2)=\phi+1$, then
$(E_0[1],E_1,E_2)$ or $(E_0,E_1,E_2[-1])$ is a weakly
$\sigma$-exceptional triple respectively. Therefore, it suffices to
show the existence of a $\sigma$-exceptional triple under the
assumptions $\phi(E_0)>\phi$ and $\phi(E_2)<\phi+1$. Obviously, if
$\phi(E_i)\in[\phi,\phi+1)$ or $(\phi,\phi+1]$ for each $0\leq i\leq
2$, then by definition,  $\mathcal{E}$ is a $\sigma$-exceptional
triple, we are done. Now assume that there exists a permutation
$s\in S_3$ such that $\phi(E_{s(0)})=\phi$ and
$\phi(E_{s(1)})=\phi+1$. Now if $\phi(E_{s(2)})=\phi$ or $\phi+1$,
then by Lemma \ref{weak-exc-seq-eq}, we are done. Hence, we can
further assume that $\phi(E_{s(2)}):=\psi\in(\phi,\phi+1)$.
Therefore, $\mathcal{E}$ has one of the following $\sigma$-type:
$$(i)\,(\phi+1,\phi,\psi);\; \quad(ii)\,(\psi, \phi+1,\phi);\; \quad(iii)\,(\phi+1,\psi,\phi).$$

For the first case, we get that
$\mathcal{E'}:=(E_0,E_1[1],E_2)\subseteq\mathcal{P}(\phi,\phi+1]$.
If $\Ext^1(E_0,E_1)=0$, then by definition $\mathcal{E'}$ is a
$\sigma$-exceptional triple. If $\Ext^1(E_0,E_1)\neq 0$, then
$\dim\Ext^1(E_0,E_1)=1$ or 2 by Proposition \ref{exceptional pair
and triple}.

\emph{Case 1:} $\dim\Ext^1(E_0,E_1)=1$; then by Lemma \ref{mutation
semistable}, $\mathcal{R}_{E_1}(E_0)$ is seimstable with phase
$\phi+1$ or $\phi+2$. It follows that
$(\mathcal{R}_{E_1}(E_0),E_0,E_2)$ or
$(E_1[1],\mathcal{R}_{E_1}(E_0)[-1],E_2)$ is a $\sigma$-exceptional
triple.

\emph{Case 2:} $\dim\Ext^1(E_0,E_1)=2$; then we can assume
$(E_0,E_1)=(\co[1],\co(\vc))$ without loss of generality. It follows
that $E_2=S_{1,0}[-i]$ for some $i>0$ and
$\co,\co(\vc)\in\mathcal{P}(\phi)$. Then from Lemma \ref{Kronecker
embedding}, there exists $n>0$ such that $\langle
\co(n\vc-\vc),\co(n\vc)[-1]\rangle\subseteq \mathcal{P}(\phi)$, or
there exists $m>0$ such that $\langle
\co(-m\vc+\vc),\co(-m\vc)[1]\rangle\subseteq \mathcal{P}(\phi)$. It
follows that $(\co(n\vc-\vc),\co(n\vc)[-1],E_2)$ or
$(\co(-m\vc+\vc),\co(-m\vc)[1],E_2)$ is a $\sigma$-exceptional
triple.

For the second case,
$\mathcal{E'}:=(E_0,E_1[-1],E_2)\subseteq\mathcal{P}[\phi,\phi+1)$,
the proof is dual. For the third case, if $\Hom(E_1,E_2[1])=0$, then
by Lemma \ref{12to02} we get $\Hom(E_0,E_2[1])=0$. It follows that
$\mathcal{E'}$ is a $\sigma$-exceptional triple. Now assume that
$\Hom(E_1,E_2[1])\neq0$. Note that
$(E_1,E_2)\subseteq\mathcal{P}[\phi,\phi+1)$. We obtain from
\cite[Prop.3.15]{DK} that $\sigma$ induces a stability
condition $\sigma'=(Z',\mathcal{P}')$ on $\rm{Tr}(E_1,E_2)$
satisfying $\mathcal{P}'(t)=\mathcal{P}(t)\bigcap\rm{Tr}(E_1,E_2)$
for any $t\in\mathbb{R}$. Then by \cite[Prop.3.17]{EM}, the right mutation $\mathcal{R}_{E_2}(E_1)$ is semistable with phase $\psi'$ in the interval $(\phi+1,\phi+2)$. Hence we get
an exceptional triple $(E_0,E_2[1],\mathcal{R}_{E_2}(E_1)[-1])$,
which has $\sigma$-type $(\phi+1,\phi+1,\psi'-1)$, then by an
argument as for the first case we can finish the proof.
\end{proof}

Now we can show that Theorem \ref{main theorem} implies the main
theorem in \cite{DK} which is the key to prove that the space of
stability conditions on $\mathcal{D}$ is connected.

\begin{thm}{\rm(}\cite[Thm. 10.1]{DK}{\rm)} Let $\sigma=(Z,\mathcal{P})$ be a stability condition on $\mathcal{D}$. Then there exists a $\sigma$-exceptional triple.
\end{thm}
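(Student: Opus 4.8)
The plan is to realize $\sigma$ as the coarsening of a finest t-stability, transport the t-exceptional triple produced by Theorem \ref{main theorem} into a weakly $\sigma$-exceptional triple, and then pass to a genuine $\sigma$-exceptional triple via Proposition \ref{closed interval to open interval}. First I would observe that $\sigma=(Z,\mathcal{P})$ itself determines a t-stability $(\Psi,\{P_\psi\}_{\psi\in\Psi})$ on $\mathcal{D}$: take $\Psi=\{\psi\in\mathbb{R}\mid\mathcal{P}(\psi)\neq 0\}$ with the order inherited from $\mathbb{R}$, put $P_\psi=\mathcal{P}(\psi)$, and let the shift act by $\tau_\Psi(\psi)=\psi+1$. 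Axioms (ii), (iv) of Definition \ref{stab cond on tri cat} translate verbatim into axioms (ii), (iii) of a t-stability, while $\mathcal{P}(\psi+1)=\mathcal{P}(\psi)[1]$ gives axiom (i) with $\tau_\Psi$ strictly increasing. Since each t-stability on $\mathcal{D}$ can be refined to a finest one, I obtain a finest t-stability $(\Phi,\{\Pi_\varphi\}_{\varphi\in\Phi})$ together with a surjection $r:\Phi\rightarrow\Psi$ satisfying $r\tau_\Phi=\tau_\Psi r$, with $r$ non-decreasing and $P_\psi=\langle\Pi_\varphi\mid\varphi\in r^{-1}(\psi)\rangle$. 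In particular $\Pi_\varphi\subseteq\mathcal{P}(r(\varphi))$ for every $\varphi$, so every object semistable for the finest t-stability is $\sigma$-semistable with $\sigma$-phase $r(\varphi(\cdot))$, and by Lemma \ref{strictly increasing} $\tau_\Phi$ is strictly increasing.

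Next I would verify that this finest t-stability is effective, so that Theorem \ref{main theorem} applies. Given semistable $X,Y$, the relation $r\tau_\Phi=\tau_\Psi r$ gives $r(\varphi(X[i]))=r(\varphi(X))+i$. Whenever $r(\varphi(X))+i$ and $r(\varphi(Y))$ differ, order-compatibility of $r$ forces $\varphi(X[i])$ and $\varphi(Y)$ to compare the same way, so one can solve $\varphi(X[i])\leq\varphi(Y)<\varphi(X[i+1])$ by taking $i=\lfloor r(\varphi(Y))-r(\varphi(X))\rfloor$; in the boundary case where $X[i]$ and $Y$ land in the same fiber of $r$, one checks directly that $i$ or $i-1$ works. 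Hence Theorem \ref{main theorem} yields a t-exceptional triple $(E_0,E_1,E_2)$: the $E_i$ are semistable, $\Hom^{\leq 0}(E_i,E_j)=0$ for $i<j$, and $\varphi(E_i)\in(\varphi_0,\tau_\Phi(\varphi_0)]$ for some $\varphi_0\in\Phi$.

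Finally I would transport this triple through $r$. Each $E_i$ is $\sigma$-semistable by the first paragraph, condition (ii) is literally the same for the two notions, and applying the non-decreasing map $r$ to $\varphi(E_i)\in(\varphi_0,\tau_\Phi(\varphi_0)]$ gives $\phi(E_i)=r(\varphi(E_i))\in[r(\varphi_0),r(\varphi_0)+1]$, a closed interval of length one. Thus $(E_0,E_1,E_2)$ is a weakly $\sigma$-exceptional triple, and Proposition \ref{closed interval to open interval} upgrades it to a $\sigma$-exceptional triple, completing the proof. I expect the main obstacle to be the bookkeeping around the refinement map $r$: confirming that the finest refinement is genuinely effective, and that the half-open window $(\varphi_0,\tau_\Phi(\varphi_0)]$ collapses under $r$ to the length-one closed window demanded by the weak notion rather than to something longer. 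Everything else reduces to matching the two sets of axioms.
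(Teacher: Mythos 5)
Your proposal is correct and follows essentially the same route as the paper: view $\sigma$ as a t-stability, refine it to a finest one, use the commutation $r\tau_{\Phi}=\tau_{\Psi}r$ to verify effectiveness, invoke Theorem \ref{main theorem} to get a t-exceptional triple, push it down through $r$ to a weakly $\sigma$-exceptional triple in $\mathcal{P}[r(\varphi_0),r(\varphi_0)+1]$, and finish with Proposition \ref{closed interval to open interval}. Your explicit handling of the boundary case in the effectiveness check and of why each $E_i$ remains $\sigma$-semistable is slightly more careful than the paper's wording, but the argument is the same.
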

\begin{proof} By the definition, $(\mathbb{R},\{\mathcal{P}(\phi)\}_{\phi\in\mathbb{R}})$ is a t-stability on $\mathcal{D}$, which can be refined to a finest one, say, $(\Phi,\{\Pi_{\varphi}\}_{\varphi\in\Phi})$. Then there exists a surjective map $r:\Phi\rightarrow \mathbb{R}$ such that for each $\varphi\in\Phi$, $r(\tau_{\Phi}(\varphi))=\tau_{\mathbb{R}}(r(\varphi))=r(\varphi)+1$.  Hence, for any $\varphi,\varphi'\in\Phi$, there exists $i\in\mathbb{Z}$ such that
$$r(\tau_{\Phi}^{i}(\varphi))=r(\varphi)+i\leq r(\varphi')<r(\varphi)+i+1=r(\tau_{\Phi}^{i+1}(\varphi)).$$
It follows that
$\tau^{i}_{\Phi}(\varphi)\leq\varphi'<\tau^{i+1}_{\Phi}(\varphi)$.
Therefore, $(\Phi,\{\Pi_{\varphi}\}_{\varphi\in\Phi})$ is an
effective finest t-stability. By Theorem \ref{main theorem}, there
exists a t-exceptional triple $(E_0,E_1,E_2)$ in $\mathcal{D}$.
Hence, there exists some $\varphi_0\in\Phi$ such that
$\varphi(E_i)\in (\varphi_0,\tau_{\Phi}(\varphi_0)]$ for each $i$.
It follows that $E_i\in\mathcal{P}[r(\varphi_0),r(\varphi_0)+1]$ for
each $i$. Then by Proposition \ref{closed interval to open
interval}, there exists a $\sigma$-exceptional triple on
$\mathcal{D}$.
\end{proof}

\section*{Acknowledgements}

The authors would like to thank Bangming Deng and Zhe Han for their
valuable suggestions. We are also grateful to Nan Gao
for providing the proof of Lemma \ref{kernel and cokernel}.

\end{document}